%------------------------------------------------------------------------------
% Beginning of journal.tex
%------------------------------------------------------------------------------
%
% AMS-LaTeX version 2 sample file for journals, based on amsart.cls.
%
%        ***     DO NOT USE THIS FILE AS A STARTER.      ***
%        ***  USE THE JOURNAL-SPECIFIC *.TEMPLATE FILE.  ***
%
% Replace amsart by the documentclass for the target journal, e.g., tran-l.
%
\documentclass{amsart}
%[12pt]
\usepackage{tkz-euclide}
\usepackage{memhfixc}
\usepackage{latexsym}
\usepackage{amssymb}
\usepackage{tikz}
\usepackage{amsthm}
\usepackage{amsmath}
\usepackage{enumerate}
\usepackage{esint}
\usepackage{tkz-euclide}
\usepackage{tikz}
\usetikzlibrary{decorations.fractals}
\usetikzlibrary{decorations.footprints}
\usetikzlibrary{through, intersections,decorations.text}
\usetikzlibrary{%
  arrows,
  calc
}

\newtheorem{theorem}{Theorem}[section]
\newtheorem{lemma}[theorem]{Lemma}

\theoremstyle{definition}

\newtheorem{corollary}[theorem]{Corollary}
\theoremstyle{remark}
\newtheorem{remark}[theorem]{Remark}

\numberwithin{equation}{section}

%    Absolute value notation

%    Blank box placeholder for figures (to avoid requiring any
%    particular graphics capabilities for printing this document).

% Some more defn
%integration argument 
\newcommand\rd{\mathrm{d}}
\newcommand\ri{\mathrm{i}}
\newcommand\re{\mathrm{e}}
%special partial derivative
\font\ursymbol=psyr at 10pt % You also use other font sizes.
\def\urpartial{\mbox{\ursymbol\char"B6}}

%declare math operator
\DeclareMathOperator*{\osc}{osc}
\DeclareMathOperator*{\esssup}{ess\,sup}
\DeclareMathOperator*{\essinf}{ess\,inf}

% Cauchy principal value of an integral
%syntax \[\dashint_a^bf(x)\,\rd x\]

\def\XXint#1#2#3{{\setbox0=\hbox{$#1{#2#3}{\int}$ }
\vcenter{\hbox{$#2#3$ }}\kern-.55\wd0}}

%%% A numbered theorem with a fancy name: %%%

%%% Definitions, remarks and examples are NOT italicized:  %%%

\newcommand{\hd}{\mbox{H-dim}\;}

\newcommand{\ce}{ \mathbb{C}}
\newcommand{\beq}{\begin{equation}}
\newcommand{\bea}[1]{\begin{array}{#1} }
\newcommand{\eeq}{ \end{equation}}
\newcommand{\ea}{ \end{array}}

\newcommand{\ran}{\rangle}
\newcommand{\lan}{\langle}

\newcommand{\la}{\lambda}

\newcommand{\ar}{\partial}
\newcommand{\si}{\sigma}

\newcommand{\Om}{\Omega}

\begin{document}

% \title[short text for running head]{full title}
\title{On the Dimension of a Certain Measure In the Plane}
%    Only \author and \address are required; other information is
%    optional.  Remove any unused author tags.

%    author one information
% \author[short version for running head]{name for top of paper}
\author[Murat Akman]{Murat Akman}
\address{Department of Mathematics \\ University of  Kentucky  \\ Lexington, Kentucky, 40506}
%\curraddr{}
\email{makman@ms.uky.edu}
%\thanks{}

%    author two information
% \author{John L. Lewis}
% \address{Department of Mathematics \\ University of  Kentucky  \\ Lexington, Kentucky, 40506}
% %\curraddr{}
% \email{johnl@uky.edu}
%\thanks{}

%    \subjclass is required.

\date{}

%\dedicatory{}

%    Abstract is required.
 \subjclass[2010]{Primary 35J25, Secondary 35J70}

%\date{January 1, 2001 and, in revised form, June 22, 2001.}

\dedicatory{Dedicated to John L. Lewis on the occasion of his 70th birthday}

\keywords{Hausdorff measure, Hausdorff dimension, dimension of a measure, {p}-harmonic measure}

\begin{abstract}
In this paper we study the Hausdorff dimension of a measure $\mu$ related to a positive weak solution, $u$, 
of a certain partial differential equation in $\Omega\cap N$ where $\Omega\subset \mathbb{C}$ is a bounded simply 
connected domain and $N$ is a neighborhood of $\partial\Omega$. $u$ has continuous boundary value $0$ on $\partial\Omega$ and is a weak solution to 
\[
 \sum\limits_{i,j=1}^{2}\frac{\partial}{\partial x_{i}}(f_{\eta_{i}\eta_{j}}(\nabla u(z))\, u_{x_{j}}(z))=0\, \ \mbox{in}\, \ \Omega\cap N.
\]
Also $f(\eta)$, $\eta\in\mathbb{C}$ is homogeneous of degree $p$ and $\nabla f$ is $\delta-$monotone on $\mathbb{C}$ for some $\delta>0$.
Put $u\equiv 0$ in $N\setminus \Omega$. Then $\mu$ is the unique positive finite Borel measure with support on $\partial\Omega$ satisfying 
\[
\int\limits_{\mathbb{C}}\langle \nabla f(\nabla u(z)), \nabla\phi(z)\rangle dA=-\int\limits_{\partial\Omega} \phi(z) d\mu
\]
for every $\phi\in C^{\infty}_{0}(N)$.

Our work generalizes work  of  Lewis and coauthors when the above PDE is the $p$ Laplacian  
(i.e, $f(\eta)=|\eta|^p$) and also for $p=2$, the well known theorem of Makarov regarding the Hausdorff 
dimension of harmonic measure relative to a point in $\Omega$.
\end{abstract}
\maketitle
\section{Introduction}
\label{introduction}

Let $\Omega'$ denote a  bounded region in the complex plane $\ce$. 
Given $p$, $1<p<\infty$, let $z=x_1+\ri x_2$ denote points in $\mathbb{C}$ and let $W^{1,p}(\Omega')$ denote equivalence classes of
functions $h:\mathbb{C}\to\mathbb{R}$ with  distributional gradient  $\nabla h =h_{x_1}+\ri h_{x_2}$ and  Sobolev norm

%------------------------------------------------------

\begin{align}
\|h\|_{W^{1,p}(\Omega')}=\left(\int\limits_{\Omega'}(|h|^{p}+|\nabla h|^{p})\rd\nu\right)^{\frac{1}{p}}<\infty 
\end{align} 
where $\rd\nu$ denotes two dimensional Lebesgue measure. The space $W^{1,p}_{\mbox{\tiny{loc}}}(\Omega')$ 
is defined in the obvious manner; $h\in W^{1,p}_{\mbox{\tiny{loc}}}(\Omega')$ if and only if $h\in W^{1,p}(U)$ for 
every open $U\Subset \Omega'$, i.e compactly contained in $\Omega'$.
%%Change log 2.2
%%Define W^{1,p}_{\mbox{\tiny{loc}}}(\Omega') here.

Let $ C_0^\infty(\Omega')$ denote infinitely differentiable functions with compact support in $\Omega'$ and 
let $ W^{1,p}_0(\Omega')$ denote the closure of $C_0^\infty(\Omega')$ in the norm of  $W^{1,p}(\Omega')$. 
Let $\langle\cdot,\cdot\rangle$ denote the standard
inner product on $\mathbb{C}$.

Fix $p$, $1<p<\infty$ and 
let  $f:\mathbb{C}\setminus\{0\}\to(0,\infty)$ be homogeneous of degree $p$ on $\mathbb{C}\setminus\{0\}$. That is,    

%------------------------------------------------------

\begin{align}
\label{homogeneousofdegreep}
f(\eta)=|\eta|^{p}f(\frac{\eta}{|\eta|})>0\, \, \mathrm{when}\, \, \eta\in\mathbb{C}\setminus\{0\}.
\end{align}

%------------------------------------------------------

We also assume that $\nabla f$ is $\delta-$monotone on $\mathbb{C}$ for some $0<\delta\leq 1$. 
By definition, this means that $f\in W^{1,1}(B(0,R))$ for each $R>0$
and for almost every $\eta, \eta'\in\mathbb{C}$ (with respect to two dimensional Lebesgue measure) 
\begin{align}
\label{deltamonotone}
 \begin{split}
  \langle \nabla f(\eta)-\nabla f(\eta'), \eta-\eta'\rangle\geq \delta|\nabla f(\eta)-\nabla f(\eta')||\eta-\eta'|.
 \end{split}
\end{align}
%------------------------------------------------------

Next, given $h\in W^{1,p}(\Omega')$ let  $\mathfrak{A}=\{h+\phi:\,\phi\in W^{1,p}_{0}(\Omega')\}$.  
From (\ref{star}) in section \ref{lemmas} and \cite[Chapter 5]{HKM} it follows that

%It is well known from \cite[Chapter 5]{HKM} that there is $u'\in \mathfrak{A}$ satisfying

\begin{align}
\label{weaksoln}
\inf_{w\in\mathfrak{A}}\,\int\limits_{\Omega'}f(\nabla w)\rd\nu=\int\limits_{\Omega'}f(\nabla u')\rd\nu\, \ \mathrm{for\ some}\, \ u'\in \mathfrak{A}.
\end{align}

%------------------------------------------------------

Also $u'$ is a weak solution at $z\in\Omega'$ to the Euler-Lagrange equation,

%------------------------------------------------------

\begin{align}
\label{flaplace} 
\begin{split}
0&=\nabla\cdot(\nabla f(\nabla u'(z)))=\sum\limits_{k=1}^{2}\frac{\ar}{\partial x_{k}}\left(\frac{\ar f}{\partial\eta_k}(\nabla u'(z))\right) \\
 &=\sum\limits_{k,j=1}^{2}f_{\eta_{k}\eta_{j}}(\nabla u'(z))\, u'_{x_{k} x_{j}}(z)   
\end{split}
\end{align}

%------------------------------------------------------

That is, $u'\in W^{1,p}(\Omega')$ and 

%------------------------------------------------------

\begin{align}
\label{uisweaksoln}
\int\limits_{\Omega'}\lan\nabla f(\nabla u'(z)), \nabla\phi(z)\ran \rd\nu=0\,\  \mathrm{whenever}\, \ \phi\in W^{1,p}_0(\Omega').   
\end{align}
%------------------------------------------------------

Next, suppose $\Omega\subset\ce$ is a bounded simply connected domain, $N$ is a neighborhood of $\partial\Omega$, and $u>0$ is a weak solution to the 
Euler Lagrange equation in (\ref{flaplace}) with $\Omega'=\Omega\cap N$, $u'=u$. Also assume that $u=0$ on $\partial\Omega$ in the $W^{1,p}(\Omega\cap N)$ sense. More specifically, let $u\equiv 0$ on $N\setminus\Omega$. Then $u\zeta\in W^{1,p}_{0}(\Omega)$ whenever $\zeta\in C^{\infty}_{0}(\Omega)$. Under this scenario 
it follows from \cite[Chapter 21]{HKM} that there exists 
a unique finite positive Borel measure $\mu$ with support on $\partial\Omega$ satisfying

%------------------------------------------------------

\begin{align}
\label{ast}
\begin{split}
\int\limits_{\ce}\lan\nabla f(\nabla u(z)), \nabla\phi\ran \rd\nu=-\int\limits_{\partial\Omega}\phi \rd \mu
\end{split}
\end{align}
whenever $\phi\in C_{0}^{\infty}(N)$.
\begin{remark}
 We remark from (\ref{ast}) that if $\partial\Omega$ and $f$ are smooth enough then 
\[
 \rd\mu=\frac{f(\nabla u)}{|\nabla u|}\rd H^{1}|_{\partial\Omega}.
\]
\end{remark}
We are now ready to introduce the notions of Hausdorff measure and Hausdorff dimension of $\mu$ associated with a 
weak solution $u$ to (\ref{flaplace}) in $\Omega\cap N$.

Let $\lambda>0$ be defined on $(0, r_{0})$ with $\lim\limits_{r\to 0}\lambda(r)=0$ for some fixed $r_{0}$. 
We define the $H^{\lambda}$ measure of a set $E\subset\mathbb{C}$ as follows; 

For fixed $0<\delta<r_{0}$, let $\{B(z_{i}, r_{i})\}$ be a cover of $E$ with $0<r_{i}<\delta$, $i=1,2,\ldots$, and set
\[
 \phi_{\delta}^{\lambda}(E)=\inf\sum\limits_{i}\lambda(r_{i}).
\]
where the infimum is taken over all possible covers of $E$. 

Then the Hausdorff $H^{\lambda}$ measure of $E$ is
\[
 H^{\lambda}(E)=\lim\limits_{\delta\to 0} \phi_{\delta}^{\lambda}(E).
\]
When $\lambda(r)=r^{\alpha}$ we write $H^{\alpha}$ for $H^{\lambda}$. 
Next we define the Hausdorff dimension of the measure $\mu$ obtained in (\ref{ast}) as 
\[
 \hd{\mu}=\inf\{\alpha:\, \ \exists\, \mathrm{Borel\ set}\, \ E\subset\partial\Omega\, \ \mathrm{with}\, \ H^{\alpha}(E)=0\, \ \mathrm{and}\, \ \mu(E)=\mu(\partial\Omega)\}.
\]

To give a little history, if $\mu=\omega$ is harmonic measure with respect to a point $z_{0}\in\Omega$, the case when $f(\nabla u)=|\nabla u|^{2}$ and
$u$ is a solution to Laplace's equation in $\Omega\setminus\{z_{0}\}$, then Carleson showed in \cite{C} that
\begin{theorem}
\label{carleson}
$\hd{\omega}=1$ when $\partial\Omega$ is a snowflake and $\hd{\omega}\leq 1$ when $\Omega$ is any self similar cantor set. 
\end{theorem}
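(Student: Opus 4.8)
The plan is to recall the construction in Carleson's paper \cite{C} and adapt it to the two model domains. For the snowflake, the key tool is conformal invariance of harmonic measure: if $\Phi:\mathbb{D}\to\Omega$ is a Riemann map with $\Phi(0)=z_0$, then $\omega$ is the push-forward of normalized Lebesgue measure on $\partial\mathbb{D}$ under the boundary values of $\Phi$. Hence the dimension of $\omega$ is governed by the metric distortion of $\Phi$ near $\partial\mathbb{D}$, which by standard distortion estimates is controlled by $|\Phi'|$. The von Koch snowflake is a self-similar set built from a single linear contraction ratio, say each of the $M$ pieces scaled by $r$, so $\partial\Omega$ has similarity dimension $d=\log M/\log(1/r)>1$. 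One shows first that $\omega$ is comparable (on a fixed scale) across the $M$ subpieces at each generation — this follows from the self-similar symmetry of the construction together with Harnack's inequality and the boundary Harnack principle for harmonic functions on NTA domains (the snowflake complement is uniformly NTA). From this comparability one derives that $\omega$ of a generation-$n$ piece is exactly $M^{-n}$, while its diameter is $\asymp r^n$, so locally $\omega(B(z,\rho))\asymp \rho^{\,d}$ with $d>1$; this pins down $\hd{\omega}=d>1$, and in particular forces $\hd{\omega}\ge 1$, while $\hd{\omega}\le d$ is immediate from the regularity estimate. Wait — Carleson's theorem as stated says $\hd{\omega}=1$ for the snowflake, so the right conclusion is the reverse: one must show the exact cancellation that makes $\omega$ live on a set of dimension exactly $1$ despite $\partial\Omega$ having dimension $>1$; this is the substance of \cite{C} and uses the logarithmic law / law of the iterated logarithm for $\log|\Phi'|$, i.e. a quantitative form of Makarov's theorem in the self-similar setting.

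Concretely, for the snowflake I would proceed as follows. First, set up the Riemann map $\Phi$ and record the identity $\omega(\Phi(I))=|I|/2\pi$ for arcs $I\subset\partial\mathbb{D}$. Second, invoke the self-similarity: the snowflake is the attractor of an iterated function system of similarities, and this induces an exact renewal/martingale structure on the quantities $a_n(\zeta):=\log|\Phi(r_n\zeta)|'$ along dyadic-type arcs, where the increments from one generation to the next are i.i.d.-like with a fixed finite variance $\sigma^2>0$ (positivity of $\sigma^2$ is where one uses that the domain is genuinely fractal, not a disk). Third, apply the law of the iterated logarithm to the partial sums $\sum a_k$ to conclude that for $\omega$-a.e. boundary point, $\log(1/\omega(B(\zeta,\rho)))= \log(1/\rho) + O\!\left(\sqrt{\log(1/\rho)\log\log\log(1/\rho)}\right)$ as $\rho\to0$. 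Fourth, feed this two-sided estimate into the mass distribution principle (Frostman) in both directions: the lower bound on $\omega$-mass of balls gives $\hd{\omega}\le1$, and the upper bound, via a covering argument, gives $\hd{\omega}\ge1$. The self-similar Cantor case is similar but simpler in one direction: the renewal structure is literally exact (no error term needed for the inequality $\le1$), and for a self-similar Cantor set in the plane one only needs that the "expected" growth rate of $\log|\Phi'|$ is nonnegative on average, which forces $\hd{\omega}\le1$; here Makarov's universal bound $\hd{\omega}\le1$ for any simply connected domain already does the job, so the Cantor statement reduces to citing Makarov together with the observation that the hypotheses (simple connectivity) are met.

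The main obstacle is establishing the exact logarithmic law — in particular the positivity of the asymptotic variance $\sigma^2$ and the genuinely i.i.d.\ (or stationary ergodic) nature of the generation-to-generation increments of $\log|\Phi'|$. This requires identifying the correct "dyadic martingale" adapted to the self-similar combinatorics: one partitions $\partial\mathbb{D}$ into the preimages under $\Phi$ of the generation-$n$ pieces of the fractal, checks that these form a nested family with bounded eccentricity (a consequence of the complement being NTA and of Koebe's distortion theorem), and verifies that $\log|\Phi'|$ averaged over these pieces behaves like a sum of bounded, mean-zero, stationary increments. Once that martingale/renewal framework is in place, the LIL and the two Frostman arguments are essentially bookkeeping. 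I would present the snowflake argument in full via this martingale route and then remark that the self-similar Cantor case follows by combining the same scheme with Makarov's theorem, without repeating the details. Since this is a historical remark (Theorem~\ref{carleson} is attributed to Carleson), a complete proof is not expected here; the natural exposition is to sketch the conformal-map-plus-self-similarity reduction and then defer to \cite{C}.
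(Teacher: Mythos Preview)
The paper does not prove Theorem~\ref{carleson} at all: it is stated in the introduction purely as historical background and attributed to Carleson \cite{C}, with no argument given. So there is no ``paper's own proof'' to compare against; the appropriate treatment is simply to cite \cite{C}, which you eventually do in your final sentence.

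That said, the sketch you offer before that contains a genuine error worth flagging. For the self-similar Cantor set case you propose to invoke Makarov's theorem for simply connected domains. But the complement of a planar Cantor set is \emph{not} simply connected (its fundamental group is nontrivial), so neither Makarov's theorem nor your Riemann-map machinery applies there; there is no conformal map $\Phi:\mathbb{D}\to\Omega$ to analyze. This is precisely why Carleson's result for Cantor sets was interesting and why it was later subsumed by the Jones--Wolff theorem (Theorem~\ref{jonesandwolfftheorem} in the paper), which handles arbitrary planar domains. Carleson's own argument in \cite{C} is combinatorial and entropy-flavored: one tracks how harmonic measure distributes among the pieces at each generation and shows, via a convexity/entropy inequality, that the measure must concentrate on a subfamily of pieces whose total diameter-sum is controlled at exponent~$1$. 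Your LIL/martingale route for $\log|\Phi'|$ is closer in spirit to Makarov's method and is reasonable for the snowflake (where $\Omega$ \emph{is} simply connected), but it is anachronistic as a proof of Carleson's theorem and simply unavailable for the Cantor part of the statement.
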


In \cite{M}, Makarov proved that
\begin{theorem} 
Let $\Omega$ be a simply connected and $\mu=\omega$ in (\ref{ast}) be harmonic measure with respect to
a point in $\Omega$, and let
\[
 \lambda(r)=r\ \mathrm{exp}\{A\sqrt{\log\frac{1}{r} \log\log\log\frac{1}{r}}\},\,\ \ 0<r<10^{-6}.
\]
Then there exists an absolute constant $A>0$ such that harmonic measure $\omega$ is absolutely 
continuous with respect to Hausdorff $H^{\lambda}$ measure.
\end{theorem}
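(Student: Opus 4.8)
\section*{Proof proposal}

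The plan is to follow Makarov's classical argument, reducing the statement to the distortion theory of conformal maps together with a law of the iterated logarithm for Bloch functions. Since $\Omega$ is simply connected and $\mu=\omega$ is harmonic measure with respect to a point $z_{0}\in\Omega$, fix a Riemann map $\varphi:\mathbb{D}\to\Omega$ with $\varphi(0)=z_{0}$, where $\mathbb{D}$ is the unit disk, and let $\sigma$ denote normalized arc length on $\partial\mathbb{D}$; then $\omega=\varphi_{*}\sigma$, the push-forward under the boundary map. The proof reduces to the density estimate: for $\omega$-a.e. $w\in\partial\Omega$,
\[
\limsup_{t\to 0}\ \frac{\omega\big(B(w,t)\big)}{\lambda(t)}<\infty .
\]
Granting this, a standard Vitali-covering (Frostman) argument closes the proof: set $E_{n}=\{w:\omega(B(w,t))\le n\,\lambda(t)\text{ for all }0<t<1/n\}$; then $\omega\big(\partial\Omega\setminus\bigcup_{n}E_{n}\big)=0$ and on each $E_{n}$ one has $\omega|_{E_{n}}\le C_{n}\,H^{\lambda}|_{E_{n}}$, so if $H^{\lambda}(E)=0$ then $\omega(E\cap E_{n})=0$ for every $n$ and hence $\omega(E)=0$; that is, $\omega\ll H^{\lambda}$.

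To obtain the density estimate I would pull everything back to $\mathbb{D}$. First, $b:=\log\varphi'$ is holomorphic on $\mathbb{D}$ and, by the Koebe distortion theorem, $\|b\|_{\mathcal{B}}:=\sup_{z\in\mathbb{D}}(1-|z|^{2})\,|\varphi''(z)/\varphi'(z)|\le 6$, so $b$ is a Bloch function with universally bounded Bloch seminorm. Second, for $e^{i\theta}\in\partial\mathbb{D}$ with radial limit $w=\varphi(e^{i\theta})$ and small $t>0$, standard conformal-mapping estimates (Koebe distortion and Beurling's projection theorem) give $\omega(B(w,t))\asymp\rho_{t}$, where $\rho_{t}=1-|z_{t}|$ for a point $z_{t}=(1-\rho_{t})e^{i\theta}$ on the radius at $e^{i\theta}$ with $\operatorname{dist}(\varphi(z_{t}),\partial\Omega)\asymp t$; equivalently $t\asymp\rho_{t}\,|\varphi'(z_{t})|$, so that
\[
\log\tfrac1{\rho_{t}}=\log\tfrac1t+\log\big|\varphi'(z_{t})\big| .
\]
Now invoke Makarov's law of the iterated logarithm for Bloch functions: there is an absolute constant $C$ such that, for $\sigma$-a.e. $\theta$,
\[
\limsup_{r\to 1}\ \frac{\big|\,b(re^{i\theta})-b(0)\,\big|}{\sqrt{\ \log\frac{1}{1-r}\ \cdot\ \log\log\log\frac{1}{1-r}\ }}\ \le\ C\,\|b\|_{\mathcal{B}}\ \le\ 6C .
\]
Evaluating at $r=1-\rho_{t}$ (note $\rho_{t}\to0$ and $\log\frac1{\rho_{t}}\sim\log\frac1t$ as $t\to0$), taking real parts, and absorbing the bounded term $b(0)$, we get $\big|\log|\varphi'(z_{t})|\big|\le A\sqrt{\log\frac1t\,\log\log\log\frac1t}$ for all small $t$, with $A$ absolute. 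Hence $\rho_{t}\le t\exp\big(A\sqrt{\log\frac1t\log\log\log\frac1t}\big)$, so $\omega(B(w,t))\le A'\,\lambda(t)$ for all small $t$, which is precisely the required density estimate; and $A$ is absolute because $\|b\|_{\mathcal{B}}$ is, which is why the constant $A$ in the statement may be taken independent of $\Omega$.

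The one genuinely hard ingredient is Makarov's law of the iterated logarithm for Bloch functions; the rest is conformal-mapping bookkeeping. I would prove it by the dyadic-martingale (equivalently Brownian-motion) method: discretizing $b$ along the scales $1-r=2^{-n}$ produces an approximating martingale $\{S_{n}(\theta)\}$ on $\partial\mathbb{D}$ whose increments are bounded by $C\|b\|_{\mathcal{B}}$ and whose quadratic variation satisfies $\langle S\rangle_{n}\le C\|b\|_{\mathcal{B}}^{2}\,n$; one then derives the Gaussian tail bound $\mathbb{E}\,e^{sS_{n}}\le e^{c s^{2}n}$ and runs the classical martingale law of the iterated logarithm (a Borel--Cantelli argument over a sparse sequence of scales) to get $\limsup_{n}|S_{n}(\theta)|/\sqrt{n\log\log n}<\infty$ for a.e.\ $\theta$; finally the substitution $n=\log_{2}\frac1{1-r}$ turns $\sqrt{n\log\log n}$ into $\sqrt{\log\frac1{1-r}\,\log\log\log\frac1{1-r}}$, exactly the slowly varying factor in $\lambda$. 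The delicate points are that the slowly varying factor must be the triple logarithm --- it is sharp, by Makarov's counterexamples --- and that all constants must be uniform in $\theta$ and in $\Omega$, the latter being automatic here since $\|\log\varphi'\|_{\mathcal{B}}\le 6$ for every simply connected $\Omega$.
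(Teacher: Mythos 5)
First, a point of context: the paper does not prove this statement anywhere --- it is quoted from Makarov \cite{M} as background, and the paper's own machinery (the moment estimates for $\log f(\nabla u)$ on level sets of a capacitary function) yields at $p=2$ only the weaker gauge $r\exp[A\sqrt{\log\frac1r\log\log\frac1r}]$ of Theorem \ref{maintheorem}, not the triple logarithm. So your proposal is necessarily a different route from anything in the paper; it is essentially Makarov's original one, and the only question is whether it is complete.

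It is not: there is a genuine gap at the step where you assert that ``Koebe distortion and Beurling's projection theorem give $\omega(B(w,t))\asymp\rho_t$'' with $\rho_t=1-|z_t|$ determined by radial data at a single preimage $e^{i\theta}$ of $w$. The lower bound $\omega(B(w,t))\gtrsim\rho_t$ is standard, but the upper bound is the crux of the whole theorem and does not follow from these tools: $\varphi^{-1}(B(w,t))$ need not lie in an arc of length $\asymp\rho_t$ about $e^{i\theta}$, because $B(w,t)$ may be accessible from many separate parts of $\partial\mathbb{D}$ (already in a slit disk every interior point of the slit has two preimage arcs, and one can arrange infinitely many accesses), so $\omega(B(w,t))$ is a sum over all preimage components of which the radius at $e^{i\theta}$ sees only one; Beurling's projection theorem by itself gives only $\omega(B(w,t))\le C\sqrt{t}$. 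This is exactly where Makarov's argument needs an idea beyond the LIL: one fixes $\epsilon>0$, uses Egorov to produce $G_\epsilon\subset\partial\mathbb{D}$ with $\sigma(\partial\mathbb{D}\setminus G_\epsilon)<\epsilon$ on which the LIL bounds for $\log|\varphi'|$ hold uniformly from a fixed scale on, and proves the density bound for the \emph{restricted} measure, namely $\sigma(\{\zeta\in G_\epsilon:\ \varphi(\zeta)\in B(w,t)\})\le C\lambda(t)$ for every $w$ and all small $t$, by a packing argument: with $\rho$ chosen so that the radial points $(1-\rho)\zeta$ map into $B(w,2t)$, these points carry disjoint Koebe discs whose images are disjoint discs of radius $\gtrsim\rho\,e^{-C\sqrt{\log\frac1\rho\log\log\log\frac1\rho}}$ inside $B(w,3t)$, and counting them by area controls the total length of the preimage set. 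Letting $\epsilon\to0$ then gives $\omega\ll H^{\lambda}$. Without this (or an equivalent device) the two-sided estimate you invoke is unjustified and its upper half is false in general. The remaining ingredients of your outline --- the Vitali/Frostman reduction, $\|\log\varphi'\|_{\mathcal{B}}\le 6$, and the dyadic-martingale proof of the law of the iterated logarithm with the triple logarithm --- are correct and standard.
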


In \cite{JW}, Jones and Wolff proved that 
\begin{theorem}
\label{jonesandwolfftheorem}
$\hd{\omega}\leq 1$ for an arbitrary domain $\Omega$ in the plane when $\omega$ exists.  
\end{theorem}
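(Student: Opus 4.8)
The plan is to reduce $\hd{\omega}\le1$ to a pointwise lower bound on the $\omega$-mass of small balls, and then to extract that bound from the Green's function $g=g_\Omega(\cdot,z_0)$ of $\Omega$ with pole at the point $z_0$ relative to which $\omega=\omega^{z_0}$ is taken. By the standard density theorems for Hausdorff measure, it suffices to show that for $\omega$-a.e.\ $\zeta\in\partial\Omega$ one has $\liminf_{r\to0}\frac{\log\omega(B(\zeta,r))}{\log r}\le1$: indeed, for each $\epsilon>0$ the set where $\limsup_{r\to0}\omega(B(\zeta,r))/r^{1+\epsilon}>0$ then has full $\omega$-measure and is a countable union of sets of finite $H^{1+\epsilon}$-measure, so $\hd{\omega}\le1+\epsilon$, and $\epsilon\downarrow0$ finishes. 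Thus the task is to rule out that $\omega$ decays faster than every power of the scale at $\omega$-a.e.\ point; equivalently, to produce for $\omega$-a.e.\ $\zeta$ a sequence of scales $r_k\to0$ along which $\omega(B(\zeta,r_k))\ge r_k^{1+o(1)}$.

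Normalize $g$ by $g(z)\sim\log\frac1{|z-z_0|}$ near $z_0$, so that the flux of $g$ through every level curve $\{g=t\}$ equals $2\pi$ and $\omega$ is the associated probability measure. For $t\in(0,t_0)$ the sublevel domain $\Omega_t=\{g>t\}$ is, for a.e.\ $t$, smooth, has Green's function $g-t$ with the same pole, and its harmonic measure from $z_0$ restricted to $\partial\Omega_t=\{g=t\}$ is exactly $\frac1{2\pi}|\nabla g|\,dH^1$; moreover $\omega^{z_0}_{\Omega_t}\rightharpoonup\omega$ as $t\downarrow0$. Working on these smooth domains circumvents the absence of corkscrew points in a general $\Omega$. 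Since $g$ vanishes on $\partial\Omega$, near a boundary point $\zeta$ one has $g\approx r\,|\nabla g|$ at distance $\approx r$, and $\omega(B(\zeta,r))$ is governed by the portion of $\int|\nabla g|\,dH^1$ carried along the Green (gradient-flow) line landing at $\zeta$; the upshot is that the density bound sought in the first paragraph follows once one knows that $|\nabla g|$ does not decay faster than every power of the scale along $\omega$-a.e.\ Green line, i.e.\ $\limsup\log|\nabla g|/\log(\text{scale})\le0$ at $\omega$-a.e.\ $\zeta$.

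The core is the special structure of $h:=\log|\nabla g|$. As $g$ is harmonic, $g_x-ig_y$ is holomorphic in $\Omega\setminus\{z_0\}$ with only a simple pole at $z_0$, so $h$ is harmonic off the critical set of $g$ and subharmonic, with Riesz measure $\frac1{2\pi}\Delta h=\sum_j m_j\,\delta_{c_j}\ge0$ supported on the (at most countable) set of critical points $c_j$ of $g$ — there are none when $\Omega$ is simply connected, so that case is considerably easier and in any event subsumed by Makarov's theorem. Localizing to a collar of $\partial\Omega$ to avoid the pole and taking the Riesz decomposition, $h$ equals a harmonic function $H$ minus the Green potential $\sum_j m_j\,g_\Omega(\cdot,c_j)$ of the critical points. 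The harmonic part $H$ is of a class (controlled above by a positive harmonic function, with controlled negative part, via the flux bounds) to which Fatou's theorem applies, so it has finite nontangential boundary values $\omega$-a.e.; hence $|\nabla g|$ can decay superpolynomially towards $\zeta$ only through the critical-point potential. One is therefore reduced to a Carleson/energy estimate on the distribution of the zeros of $g_x-ig_y$ relative to the level sets — concretely $\sum_j m_j\,g_\Omega(z_0,c_j)<\infty$ together with the vanishing of this potential nontangentially $\omega$-a.e.\ — which one proves by a stopping-time (good-$\lambda$) decomposition of the level curves that plays the flux identity $\int_{\{g=t\}}|\nabla g|\,dH^1=2\pi$ against the square-function bound $\int_{\{0<g<t_0\}}|\nabla g|^2\,dA=2\pi t_0<\infty$. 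Granting this, $h$ has a finite nontangential limit along a sequence of scales at $\omega$-a.e.\ $\zeta$, which yields $\omega(B(\zeta,r_k))\ge r_k^{1+o(1)}$ as required.

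I expect the decisive obstacle to be exactly this last estimate on the critical points of $g$: for a completely arbitrary planar domain there may be infinitely many of them, packed arbitrarily, and no boundary regularity is available to locate them, so the stopping-time argument must be carried out intrinsically on the level curves of $g$ and must be robust under the limit $t\downarrow0$. Two subsidiary but genuine technical points are the passage from the smooth sublevel domains $\Omega_t$ back to $\Omega$ (which substitutes for the missing corkscrew points and Harnack chains) and the verification that $h=\log|\nabla g|$ really admits a harmonic majorant in a collar of $\partial\Omega$ — it blows up at the pole and is only subharmonic, so this requires the flux estimates rather than being automatic. By contrast the density reduction of the first paragraph and the identities on $\Omega_t$ in the second are routine.
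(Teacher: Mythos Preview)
The paper does not prove this theorem. It is quoted in the introduction as a known result of Jones and Wolff \cite{JW}, purely as historical background for the paper's own Theorem~\ref{maintheorem}; there is no proof in the paper to compare your proposal against.

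On the substance of your outline: the central ingredient you isolate --- the subharmonicity of $h=\log|\nabla g|$, with Riesz mass supported on the critical points of $g$ --- is indeed the engine of the Jones--Wolff argument. But two of your steps are genuine gaps rather than routine. First, the passage from control of $|\nabla g|$ to control of $\omega(B(\zeta,r))$ (your ``$g\approx r\,|\nabla g|$ at distance $\approx r$'' and the claim that the density bound follows from nontangential behaviour of $h$) is not available for an \emph{arbitrary} planar domain: there are no corkscrew points, no boundary Harnack inequality, and in general no nontangential approach regions at all, so Fatou-type statements have no a priori meaning at $\partial\Omega$. The actual Jones--Wolff proof sidesteps this by reducing to domains whose complement is a finite union of closed disks and by proving an integral inequality of the shape $\int\log\frac{d\omega}{dH^{1}}\,d\omega\ge -C$ on those approximants, rather than any pointwise boundary statement. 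Second --- and you flag this yourself --- the estimate on the critical points of $g$ is the whole difficulty, and you do not carry it out; the flux identity and the Dirichlet-integral bound you list are necessary inputs but are far from sufficient to control the Green potential of the critical set. So while your map of where the content lies is broadly accurate, the Fatou/Riesz-decomposition route as written would not go through in the stated generality, and the decisive lemma is left as a black box.
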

Later Wolff in \cite{W} extended Theorem \ref{jonesandwolfftheorem} by proving
\begin{theorem}
 Harmonic measure $\omega$ is concentrated on a set of $\sigma-$finite $H^{1}$ measure whenever $\Omega$ is an arbitrary planar domain for which $\omega$ exists.
\end{theorem}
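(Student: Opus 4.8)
The plan is to reduce the assertion to a pointwise lower bound on the upper linear density of $\omega$, and then to establish that bound by a square-function and stopping-time argument that iterates the Jones--Wolff estimate (Theorem~\ref{jonesandwolfftheorem}).

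\emph{Step 1: reduction to a density estimate.} It suffices to prove that for $\omega$-a.e. $\xi\in\partial\Omega$,
\[
\limsup_{r\to 0}\frac{\omega(B(\xi,r))}{r}>0 .
\]
Indeed, let $E$ be the set of such $\xi$ and write $E=\bigcup_{n\geq 1}E_n$, $E_n=\{\xi:\ \limsup_{r\to 0}\omega(B(\xi,r))/r>1/n\}$. The standard upper-density comparison theorem for Radon measures (if the upper $1$-density of $\mu$ exceeds $\lambda$ at every point of $A$ then $H^{1}(A)\leq C\mu(A)/\lambda$) gives $H^{1}(E_n)\leq Cn\,\omega(\partial\Omega)<\infty$, so $E$ has $\sigma$-finite $H^{1}$ measure; and $\omega(\partial\Omega\setminus E)=0$ by hypothesis, so $\omega$ is concentrated on $E$. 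This density estimate is the exact strengthening one needs beyond $\hd\omega\leq 1$ (the Jones--Wolff bound only yields $\limsup_{r\to 0}\omega(B(\xi,r))/r^{1+\e}=+\infty$ a.e. for each $\e>0$), and it is essentially sharp, since Wolff's snowflake examples have $\hd\omega<1$, so one cannot hope for $\omega\ll H^{1}$.

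\emph{Step 2: potential-theoretic tools.} Fix a pole $z_0\in\Omega$, let $\omega=\omega^{z_0}$ and $g=g(\cdot,z_0)$ the Green's function. Two facts are available for a completely arbitrary planar domain. First, Bourgain's universal lower bound: there is a dimensional constant $c_0>0$ with $\omega^{x}(B(\xi,2r)\cap\partial\Omega)\geq c_0\,\gamma$ for $x\in B(\xi,r)\cap\Omega$, where $\gamma$ is a capacitary density of $B(\xi,r)\setminus\Omega$. Second, Green's theorem applied on the level sets $\{g>t\}$ yields flux and energy identities that tie the boundary behaviour of $g$ --- and hence $\omega$, through $\omega(B(\xi,r)\cap\partial\Omega)\sim r\times(\text{flux of }\nabla g\text{ across }\partial B(\xi,r))$ --- to the finite Dirichlet integral $\int_{\Omega}|\nabla g|^{2}$. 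Because there are no interior corkscrew points and no Harnack chains, the familiar comparison $\omega(\Delta(\xi,r))\approx r\,g(\text{corkscrew point})$ must everywhere be replaced by such integrated flux estimates, used together with the maximum principle.

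\emph{Step 3: the main estimate.} For a dyadic cube $Q$ meeting $\partial\Omega$ with side $\ell(Q)$, introduce a defect $\theta(Q)\in[0,1]$ --- measuring the failure of $\omega(Q)$ to be comparable to $\ell(Q)$, equivalently the oscillation of $\log g$ over a Whitney region attached to $Q$ --- so calibrated that $\theta(Q)$ small forces $\omega(Q)\gtrsim\ell(Q)$. The goal is a Carleson/square-function bound of the shape
\[
\sum_{Q\subset Q_0}\theta(Q)^{2}\,\omega(Q)\ \lesssim\ \omega(Q_0)\,\bigl(1+\text{Dirichlet energy of }g\text{ localized near }Q_0\bigr),
\]
proved by combining the $L^{2}$ identity of Step 2 with Bourgain's lower bound in an iteration of the Jones--Wolff argument: at each scale either $\theta(Q)$ is already small, or Bourgain's estimate manufactures a definite amount of harmonic measure, and the running total is paid for by the finite energy. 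A stopping-time argument then forces, for $\omega$-a.e.\ $\xi$, $\theta(Q)\to 0$ along some sequence of dyadic cubes $Q\ni\xi$ with $\ell(Q)\to0$; hence $\omega(Q)\gtrsim\ell(Q)$, and since $Q\subset B(\xi,2\ell(Q))$ this gives the density bound of Step~1.

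\emph{Main obstacle.} The entire difficulty lies in executing Steps 2--3 for an arbitrary domain. There is no Riemann map, so Makarov's Bloch-function/law-of-the-iterated-logarithm proof for simply connected $\Omega$ is unavailable; there are no corkscrew points or Harnack chains; and $\partial\Omega$ may be totally disconnected with $\omega$ carried by a set of dimension strictly less than $1$. One must design the defect $\theta(Q)$ so that it is simultaneously (i) strong enough that its smallness yields the density lower bound and (ii) square-summable against $\omega$ using only the maximum principle, capacity estimates, Bourgain's lemma, and the finiteness of $\int_{\Omega}|\nabla g|^{2}$ --- and then correctly iterate the Jones--Wolff estimate to close the argument. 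This is precisely the technical content of Wolff's paper~\cite{W}, built on \cite{JW}.
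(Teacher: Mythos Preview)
The paper does not prove this theorem. It is stated in the introduction purely as background: the result is attributed to Wolff and cited as \cite{W}, with no argument given. There is therefore no ``paper's own proof'' to compare your proposal against.

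As for your proposal itself: Step~1 (the reduction to the $\omega$-a.e.\ lower density bound via the upper-density/Hausdorff-measure comparison) is correct and standard. Steps~2 and~3 are an honest high-level roadmap of the ingredients in Wolff's argument, and you explicitly say so at the end. But what you have written is not a proof; it is a description of the architecture of a proof, with the actual work (the construction of the defect $\theta(Q)$, the precise Carleson estimate, and the stopping-time closure) deferred to \cite{W}. If the intent was to supply a self-contained argument, the proposal has a genuine gap: none of the hard estimates are carried out. If the intent was only to indicate why the result is true and point to the source, then your sketch is accurate in spirit and matches what the paper does---namely, cite Wolff.
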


In \cite{BL}, Bennewitz and Lewis obtained the following result
for $\mu$ defined as in (\ref{ast}) for fixed $p$, $1<p<\infty$, relative to $f(\nabla u)=|\nabla u|^{p}$.
In this case the corresponding pde (\ref{flaplace}) becomes
\begin{align}
\label{plaplace}
\nabla\cdot(|\nabla u|^{p-2}\nabla u)=0,
\end{align}
which is called the $p-$Laplace equation. Moreover a weak solution of (\ref{plaplace}) is called a $p-$harmonic function.

\begin{theorem} 
Let  $\Omega\subset\mathbb{C}$ be a domain bounded by a quasi circle and let $N$ be a neighborhood of $\partial\Omega$.
Fix $p\not = 2$, $1<p<\infty$, and suppose $u$
is  $p$-harmonic in $\Omega\cap N$ with boundary value $0$ in the $W^{1,p}(\Omega\cap N)$ Sobolev sense. If $\mu$ is the measure corresponding to
$u$ as in (\ref{ast}) relative to  $f(\nabla u)=|\nabla u|^{p}$, then $\hd{\mu}\leq 1$ for  $2<p<\infty$ while $\hd{\mu}\geq 1$ for $1<p<2$.
Moreover, if $\partial\Omega$ is the von Koch snowflake then strict inequality holds for $\hd{\mu}$.
\end{theorem}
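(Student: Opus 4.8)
The plan is to translate the statement into one about the boundary behaviour of the $p$-harmonic function $u$ and then exploit the special two-dimensional structure of $p$-harmonic functions, together with self-similarity in the snowflake case. Since $\partial\Omega$ is a quasicircle it satisfies interior and exterior corkscrew conditions, so for $x\in\partial\Omega$ and small $r>0$ one may choose a nontangential point $a_r(x)\in\Omega\cap N$ with $|a_r(x)-x|\approx\mathrm{dist}(a_r(x),\partial\Omega)\approx r$. Using the boundary Harnack inequality for nonnegative $p$-harmonic functions vanishing on a quasicircle one gets $u(z)\approx\mathrm{dist}(z,\partial\Omega)\,|\nabla u(z)|$ near $\partial\Omega$; in particular $\nabla u\neq 0$ in a smaller neighbourhood of $\partial\Omega$, since in the plane a suitable power of the complex gradient of a nonconstant $p$-harmonic function is quasiregular, so its zeros are isolated, and the Hopf-type bound above excludes them near the boundary. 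Testing (\ref{ast}) against cut-offs adapted to $B(x,r)$ and invoking Harnack for $u$ and for $|\nabla u|$ (together with $|\nabla f(\eta)|\approx|\eta|^{p-1}$, which follows from homogeneity and $\delta$-monotonicity) yields the Caffarelli--Fabes--Mortola--Salsa type estimate
\[
\mu(B(x,r))\ \approx\ r^{2-p}\,u(a_r(x))^{p-1}\ \approx\ r\,|\nabla u(a_r(x))|^{p-1},
\]
with constants depending only on $p$, $\delta$ and the quasicircle constant, together with doubling of $\mu$ on $\partial\Omega$.

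Write $v:=\log|\nabla u|$ and recall that $\hd\mu$ is the $\mu$-essential supremum of the lower local dimension $\underline{d}_\mu(x):=\liminf_{r\to 0}\log\mu(B(x,r))/\log r$. The estimate above gives
\[
\underline{d}_\mu(x)\ =\ 1+(p-1)\,\liminf_{r\to 0}\frac{v(a_r(x))}{\log r},
\]
so, since $p-1>0$, the theorem reduces to showing that $\liminf_{r\to 0}v(a_r(x))/\log r\le 0$ for $\mu$-a.e.\ $x$ when $p>2$, and $\ge 0$ for $\mu$-a.e.\ $x$ when $1<p<2$, with strict inequality on a set of positive $\mu$-measure when $\partial\Omega$ is the snowflake. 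In words: $|\nabla u(a_r(x))|$ must not be too small infinitely often, $\mu$-a.e., when $p>2$, and must not be too large eventually, $\mu$-a.e., when $1<p<2$.

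The engine is that in the plane a suitable power of $u_{x_1}-\ri u_{x_2}$ is $K$-quasiregular with $K=K(p)$, so by Stoilow factorization $v=\log|\nabla u|$ is, where $\nabla u\neq 0$, a weak solution of a uniformly elliptic divergence-form equation $\nabla\cdot(A(z)\nabla v)=0$ with $A$ bounded, measurable, symmetric, elliptic with constants depending only on $p$. I would then import the potential theory for such operators --- Harnack and boundary Harnack inequalities, H\"older continuity, and a Makarov-type law of the iterated logarithm for the oscillation of $v$ along nontangential approach relative to its own natural boundary measure --- and run, essentially following \cite{BL}, a dyadic decomposition: $\log\mu(B(x,2^{-k}))=-k\log 2+(p-1)\,v(a_{2^{-k}}(x))+O(1)$, whose increments are comparable (via boundary Harnack) to increments of $v$. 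The oscillation estimate would be harmless if the natural boundary measure of $v$ coincided with $\mu$ --- it does when $p=2$, giving back Makarov's $\hd\omega=1$ --- but for $p\neq 2$ these measures differ, and the sign of $\liminf v(a_r)/\log r$ at $\mu$-typical $x$ must be extracted from how $\mu$ is distributed relative to the level structure of $|\nabla u|$. The ``fundamental inequality'' doing this is at bottom Jensen's inequality applied to the nonlinearity $t\mapsto t^{p-1}$: its convexity for $p>2$ forces $\mu$ to be, in the relevant averaged sense, slightly more concentrated than a $1$-dimensional measure (so $\hd\mu\le 1$), and its concavity for $1<p<2$ forces it to be slightly more spread out (so $\hd\mu\ge 1$).

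When $\partial\Omega$ is the von Koch snowflake I would upgrade the inequality using self-similarity: the boundary is invariant under a finite family of contracting similarities, and a renewal / thermodynamic-formalism argument along the tree of snowflake pieces shows that $k^{-1}\log\mu(B(x,2^{-k}))$ converges $\mu$-a.e., with $\hd\mu$ a pressure-type functional of $|\nabla u|$; strict convexity of $t\mapsto t^{p-1}$ for $p\neq 2$ then forces strict inequality, since equality would require $|\nabla u|$ to be essentially constant along $\partial\Omega$, which is impossible because a scaling analysis at a vertex of the von Koch curve shows $|\nabla u|$ vanishes or blows up there. I expect the main obstacle to be precisely this second step carried out with the correct sign of $\hd\mu-1$ and valid $\mu$-a.e.\ rather than merely on average: it requires both the Makarov-type oscillation bound for the variable-coefficient equation satisfied by $\log|\nabla u|$ and a precise accounting of how the exponent $p-1$ enters the dyadic averaging against $\mu$, together with a passage from an $L^1(\mu)$ estimate to an almost-everywhere one; the snowflake strictness then rests on a separate, comparably delicate non-degeneracy statement for $|\nabla u|$ at the corners.
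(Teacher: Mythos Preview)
Your scaffolding is sound --- the relation $\mu(B(x,r))\approx r\,|\nabla u(a_r(x))|^{p-1}$, the fundamental inequality, and the reduction to the sign of $\liminf_{r\to 0} v(a_r(x))/\log r$ with $v=\log|\nabla u|$ are all correct and are indeed the setup used in \cite{BL} and in this paper's generalization (the statement you are proving is quoted from \cite{BL}; the paper itself proves the more general Theorem~\ref{maintheorem} by the same mechanism). But the device you propose for extracting the sign is a genuine gap. You invoke that $v$ solves a uniformly elliptic equation (the one in Lemma~\ref{logfgraduissoln}, coming from quasiregularity of $u_z$); being a \emph{solution} of that equation is, however, symmetric under $v\mapsto -v$ and carries no information about $p-2$, so Harnack and Makarov-type oscillation control for that equation cannot by themselves force the one-sided conclusion you need. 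The appeal to ``Jensen's inequality on $t\mapsto t^{p-1}$'' remains a heuristic; you yourself flag this step as the main obstacle, and it is.

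The missing idea, isolated here as Lemma~\ref{loguisweaksoln} (and going back to \cite{BL}, \cite{LNP}, \cite{ALV}), is that for a \emph{different} linear operator --- the linearization $L\zeta=\nabla\cdot\bigl(D^2f(\nabla u)\,\nabla\zeta\bigr)$, for which $u$ and each $u_{x_l}$ are genuine solutions --- a direct computation gives $Lv=(p-2)\,\mathcal F$ weakly with $\mathcal F\approx |\nabla u|^{p-4}\sum(u_{x_ix_j})^2\ge 0$. Thus $v$ is an $L$-\emph{subsolution} when $p>2$ and an $L$-\emph{supersolution} when $1<p<2$; this is precisely where the sign of $p-2$ enters. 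Testing the resulting differential inequality against $g^{2m-1}(u-t)_+$ and inducting on $m$ yields the moment bound of Lemma~\ref{mainlemma}, hence exponential integrability of $\max(\pm v,0)$ on level sets $\{u=t\}$ against the density $f(\nabla u)/|\nabla u|\,\rd H^1$, and a covering argument then gives the dimension inequalities. The snowflake strictness in \cite{BL} is a separate argument --- self-similarity plus $\hd\mu=1$ would force $|\nabla u|$ to be essentially constant along $\partial\Omega$, which a local analysis at a corner rules out --- broadly in the spirit of your last paragraph, though not via the pressure/thermodynamic-formalism route you sketch.
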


%------------------------------------------------------

In \cite{LNP}, Lewis, Nystr\"om, and Poggi-Corradini proved that

%------------------------------------------------------

\begin{theorem} Let  $ \Om \subset \mathbb{C} $ be a bounded simply connected domain and $ N $ a neighborhood of  $ \ar \Om. $
Fix $ p\not = 2, 1 < p < \infty, $ and let $ u $
be $p$ harmonic in $ \Om \cap N $ with boundary value 0 on $\partial\Omega$ in the $ W^{1,p} ( \Om \cap N ) $ Sobolev sense.  Let  $\mu$ be the measure corresponding to
$u$ as in (\ref{ast}), relative to $f(\nabla u)=|\nabla u|^{p}$ and put \[
\la(r)=r\,\exp   \left[ A\sqrt{\log\frac{1}{r}\,\log\log\frac{1}{r}} \right]\,\ \mathrm{for}\, \ 0<r<10^{-6} .
\]
\begin{itemize}
\item[a)] If $p>2$, there exists $A=A(p)\leq -1$ such that $\mu$ is concentrated on a set of $\si-$finite $H^{\la}$ Hausdorff measure.
\item[b)] If  $1<p<2$, there exists $A=A(p)\geq 1$, such that $\mu$ is absolutely continuous with respect to $H^{\la}$ Hausdorff measure.
\end{itemize}
\end{theorem}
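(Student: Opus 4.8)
The plan is to run Makarov's program for $\mu$ in a setting with \emph{no conformal invariance}: since $p\neq 2$ there is no Riemann map with which to transport the problem to the unit disk, so the Whitney chains approaching $\partial\Omega$, the boundary Harnack inequalities, and the growth control on $u$ must all be produced by PDE methods directly on the (arbitrary) simply connected domain $\Omega$, while the two-dimensional structure of $p$-harmonic functions will play the role that the harmonicity of $\log|\varphi'|$ plays in Makarov's original proof. The first move is the standard reduction. For $w\in\partial\Omega$ and small $r>0$ let $a_{r}(w)$ be a Whitney-type point with $|a_{r}(w)-w|\approx\operatorname{dist}(a_{r}(w),\partial\Omega)\approx r$, and write $\delta(z)=\operatorname{dist}(z,\partial\Omega)$. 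Using the interior Lipschitz bound for $p$-harmonic functions, the Carleson-type estimate $u(z)\lesssim u(a_{r}(w))$ on $\Omega\cap B(w,r)$, the boundary Harnack principle for nonnegative $p$-harmonic functions vanishing on $\partial\Omega\cap B(w,2r)$, and testing (\ref{ast}) against a cutoff adapted to $B(w,r)$ (recall $\nabla f(\eta)=p|\eta|^{p-2}\eta$), one obtains, with constants depending only on $p$,
\[
\mu(B(w,r))\;\approx\;r^{\,2-p}\bigl(u(a_{r}(w))\bigr)^{p-1}\;=\;r\exp\!\bigl((p-1)\,g(w,r)\bigr),\qquad g(w,r):=\log\frac{u(a_{r}(w))}{r}.
\]
By the Rogers--Taylor density theorems, $\mu$ is concentrated on a set of $\sigma$-finite $H^{\la}$ measure once $\liminf_{r\to0}\mu(B(w,r))/\la(r)>0$ for $\mu$-a.e.\ $w$, and $\mu\ll H^{\la}$ once $\limsup_{r\to0}\mu(B(w,r))/\la(r)<\infty$ for $\mu$-a.e.\ $w$. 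Since $p-1>0$ and $\la(r)=r\exp[A\sqrt{\log\frac1r\,\log\log\frac1r}]$, the first holds provided $g(w,r)\geq -C(p)\sqrt{\log\frac1r\,\log\log\frac1r}$ for all small $r$, and the second provided $g(w,r)\leq C(p)\sqrt{\log\frac1r\,\log\log\frac1r}$; one then picks $A=A(p)$ of the corresponding sign, and (after enlarging $C(p)$ if necessary) with $A\leq -1$ for $p>2$ and $A\geq 1$ for $1<p<2$. Thus the whole theorem reduces to a \emph{one-sided Makarov estimate} for $g$, the relevant side being dictated by $\operatorname{sgn}(p-2)$.

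To establish it, discretize along a Whitney chain: with $r=2^{-n}$ and $w_{k}=a_{2^{-k}}(w)$, put $V_{n}(w):=g(w,2^{-n})$, which is comparable up to an additive $O(1)$ to $\log|\nabla u|$ at $w_{n}$ --- the quantity the PDE controls. The interior gradient estimate for $p$-harmonic functions gives uniformly bounded increments, $|V_{n}(w)-V_{n-1}(w)|\leq M(p)$, the analogue of $\log|\varphi'|$ being a Bloch function. The decisive structural fact is that in two dimensions the complex gradient $u_{x_{1}}-\ri\,u_{x_{2}}$ of a $p$-harmonic function is $K(p)$-quasiregular, so $\log|\nabla u|$ satisfies a uniformly elliptic divergence-form equation with ellipticity depending only on $p$; the attendant mean-value structure makes $(V_{n})$, relative to $\mu$ and the dyadic filtration on $\partial\Omega$, a bounded-increment martingale up to an error term that for $p\neq 2$ is \emph{one-signed}, the sign being that of $p-2$ (equivalently, $|\nabla u|$ is a sub- resp.\ super-solution of an auxiliary $p$-dependent elliptic operator). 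Azuma's concentration inequality for this sub- or super-martingale, together with the Borel--Cantelli lemma, then yields $V_{n}(w)-V_{0}(w)\geq -C(p)\sqrt{n\log n}$ when $p>2$, respectively $V_{n}(w)-V_{0}(w)\leq C(p)\sqrt{n\log n}$ when $1<p<2$, for $\mu$-a.e.\ $w$ and all large $n$; since $n\approx\log\frac1r$ this is exactly the one-sided bound on $g(w,r)$ called for above, and combining with the first paragraph proves (a) and (b). (Sharpening $\sqrt{n\log n}$ to $\sqrt{n\log\log n}$ would give the finer Makarov gauge; the stated theorem settles for the weaker, easier version that matches $\la$.)

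The main obstacle is precisely the absence of conformal invariance, which forces every estimate onto the possibly very wild domain $\Omega$. The two most delicate ingredients are: (i) the boundary Harnack principle and Carleson estimates for nonnegative $p$-harmonic functions vanishing on $\partial\Omega\cap B(w,2r)$, with constants depending only on $p$ and not on the geometry of $\partial\Omega$, together with the prime-end bookkeeping needed to make ``$\mu$-a.e.\ $w$'', the points $a_{r}(w)$, and the Whitney chains themselves well defined and uniformly controlled; and (ii) the passage from the quasiregularity of $\nabla u$ to the martingale --- and one-sided --- properties of $(V_{n})$ with respect to $\mu$ itself, the $p$-harmonic substitute for the fact, used by Makarov, that $\log|\varphi'|$ is a dyadic martingale with respect to harmonic measure. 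This second point is where the two-dimensional $p$-harmonic structure is genuinely used, and it is from the one-sidedness there that the sign of $A(p)$ --- and hence the dichotomy between $1<p<2$ and $p>2$ --- ultimately emerges.
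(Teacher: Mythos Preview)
Your outline has the right \emph{inputs}: the relation $\mu(B(w,r))\approx r^{2-p}u(a_r(w))^{p-1}$, the fundamental inequality $|\nabla u|\approx u/\delta$, the quasiregularity of $u_z$, and above all the one-signed sub/super-solution property of $\log|\nabla u|$ (equivalently $\log f(\nabla u)$) with sign $\operatorname{sgn}(p-2)$. But the passage you flag as ``(ii)'' is a genuine gap, not just a delicate step. You assert that $(V_n)$ is, with respect to $\mu$ and a dyadic filtration on $\partial\Omega$, a bounded-increment martingale up to a one-signed error, and then invoke Azuma. In the harmonic case this works because harmonic measure \emph{is} the exit law of Brownian motion, so harmonic (and hence $\log|\varphi'|$) functions are honest martingales along Brownian paths; for $p\neq 2$ there is no such stochastic process with exit distribution $\mu$, and the operator $L$ for which $\log f(\nabla u)$ is a sub/super-solution has coefficients $f_{\eta_i\eta_j}(\nabla u)$ depending on $\nabla u$ itself, so there is no fixed diffusion generating $\mu$. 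Nothing in your sketch explains what the conditional expectation $\mathbb E_\mu[V_n\mid\mathcal F_{n-1}]$ should even mean, let alone why it equals $V_{n-1}$ up to a signed error; the ``attendant mean-value structure'' of a uniformly elliptic equation does not by itself produce a boundary martingale.

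The paper (following \cite{LNP}) avoids this entirely by abandoning the boundary filtration and working on \emph{level sets} $\{u=t\}$ inside $\Omega$. The substitute for ``$\mu$ is a probability measure'' is the divergence identity
\[
I_0(t)=\int_{\{u=t\}}\frac{f(\nabla u)}{|\nabla u|}\,dH^1=\text{const},
\]
and the substitute for the martingale increments is that \emph{both} $u$ and $v=\log f(\nabla u)$ are (sub/super)-solutions of the \emph{same} linear operator $L\zeta=\sum_{k,j}\partial_{x_k}(f_{\eta_j\eta_k}(\nabla u)\,\zeta_{x_j})$. Testing $Lv$ against $g^{2m-1}(u-t)^+$ and $Lu$ against $g^{2m}\psi$, with $g=(w-c')^+$, gives a recursion for the moments $I_m(t)=\int_{\{u=t\}}g^{2m}f(\nabla u)/|\nabla u|\,dH^1$ that closes to $I_m(t)\leq c_\ast^{m+1}m!(\log\tfrac1t)^m$. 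Summing in $m$ yields exponential integrability of $w^2/\log\tfrac1t$ on level sets, which is the analytic replacement for Azuma; a Chebyshev cut then bounds the ``bad'' set $\{w\geq\mathfrak D(t)\}$ on each level set by $c(\log\tfrac1t)^{-2}$, and a Vitali covering plus the doubling $\mu(B(z_i,100r_i))\leq c\mu(B(z_i,r_i))$ transfers this to $\partial\Omega$. In short: your one-sided structural observation is exactly the engine, but it is exploited through level-set moment estimates and the coarea formula, not through a boundary martingale, and that substitution is the missing idea in your proposal.
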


In the recent paper \cite{L}, Lewis proved that

%------------------------------------------------------

\begin{theorem} Let $\Omega\subset\mathbb{C}$ be a bounded simply connected domain and $N$ be a neighborhood of  $\partial\Omega$.
Fix $p\not = 2$, $1<p<\infty$, and let $u$
be $p-$harmonic in $\Omega\cap N$ with boundary value $0$ on $\partial\Omega$ in the $W^{1,p}(\Omega\cap N)$ Sobolev sense. Let $\mu$ be the measure corresponding to
$u$ as in (\ref{ast}), relative to $f(\nabla u)=|\nabla u|^{p}$ and put
\[
\tilde{\lambda}(r)=r\,\exp\left[ A\sqrt{\log\frac{1}{r}\,\log\log\log\frac{1}{r}} \right]\,\ \mathrm{for}\, \ 0<r<10^{-6} .
\]
\begin{itemize}
\item[a)] If $p>2$, then $\mu$ is concentrated on a set of $\si-$finite $H^{1}$  measure.
\item[b)] If  $1<p<2$, there exists $A=A(p)\geq 1$, such that $\mu$ is absolutely continuous with respect to $H^{\tilde{\lambda}}$  measure.
Moreover $A(p)$ is bounded on $(3/2,2)$.
\end{itemize}
\end{theorem}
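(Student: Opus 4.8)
Parts (b) and (a) are the $p$-harmonic analogues of Makarov's theorem and of Wolff's theorem on $\sigma$-finite $H^{1}$ measure (stated above), and the plan is to follow those templates: reduce the dimension question for $\mu$ to the boundary behaviour of $u$ via the Fundamental Inequality, use the special structure of planar $p$-harmonic functions to build a martingale along a dyadic decomposition of $\partial\Omega$ coming from a conformal map, and run the sharp (triple logarithm) law of the iterated logarithm, supplemented for part (a) by a Wolff-type refinement.

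First I would assemble the structural ingredients. Where $\nabla u\neq0$ the complex gradient $u_{x_1}-\ri u_{x_2}$ is $K(p)$-quasiregular, $u\in C^{1,\alpha(p)}$, and $v:=\log|\nabla u|$ is a weak solution of a divergence-form equation $\nabla\cdot(B\nabla v)=0$ with $B$ measurable, symmetric and of eigenvalues comparable to $\min(1,p-1)$ and $\max(1,p-1)$, hence uniformly elliptic for $p$ in compact subsets of $(1,\infty)$, in particular on $(3/2,2)$; the argument of $\nabla u$ solves the conjugate equation. Together with the boundary Harnack inequality for nonnegative $p$-harmonic functions vanishing on a portion of $\partial\Omega$ (available here because $\Omega$ is simply connected, after passing through the conformal map), this yields, for $w\in\partial\Omega$ and a corkscrew point $a_r(w)\in\Omega$ at distance $\approx r$ from $w$, that $|\nabla u(a_r(w))|\approx u(a_r(w))/r$ and that $\nabla u\neq0$ along a nontangential approach at $\mu$-a.e.\ boundary point. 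Combined with the Remark following (\ref{ast}) (i.e.\ $\rd\mu\approx|\nabla u|^{p-1}\rd H^{1}|_{\partial\Omega}$), the Fundamental Inequality of Section~\ref{lemmas} then reads
\[
 C^{-1}\,r^{\,2-p}\,u(a_r(w))^{\,p-1}\ \le\ \mu(\Delta(w,r))\ \le\ C\,r^{\,2-p}\,u(a_r(w))^{\,p-1}\qquad(0<r<r_0),
\]
so $\log\mu(\Delta(w,r))=(2-p)\log r+(p-1)\log u(a_r(w))+O(1)$, and everything reduces to controlling the oscillation of $\log u(a_r(w))$ as $r\to0$.

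Next comes the law of the iterated logarithm. Fix a conformal map $\Phi\colon\mathbb{D}\to\Omega$ and decompose $\partial\Omega$ into generations of arcs obtained by projecting $\Phi(\{|z|=1-2^{-k}\})$; with $z_k(w):=a_{2^{-k}}(w)$ set $g_k(w):=\log\!\big(2^{k}u(z_k(w))\big)$. Harnack's inequality makes the increments $g_k-g_{k-1}$ bounded and their compensator $O(1)$ per step, while an energy estimate --- integration by parts in the equation for $v$, together with the boundary Harnack inequality --- gives a Carleson-measure bound for $|\nabla v|^{2}\,\mathrm{dist}(\cdot,\partial\Omega)\,\rd\nu$ relative to $\mu$, controlling the conditional variances of the martingale $(g_k)$. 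Applying the sharp law of the iterated logarithm to this martingale (the Makarov argument), rather than the crude exponential-Chebyshev and Borel-Cantelli estimate which loses one logarithm, gives, for $\mu$-a.e.\ $w$ and all small $r$,
\[
 \big|\log u(a_r(w))-\log r\big|\ \le\ A'\,\sqrt{\log\tfrac1r\,\log\log\log\tfrac1r}\,;
\]
the third logarithm appears because the number of generations down to scale $r$ is comparable to $\log\tfrac1r$, so the $\log\log$ of the martingale law of the iterated logarithm becomes $\log\log\log\tfrac1r$. Here $A'$ depends on $p$ only through the ellipticity of $B$, hence is bounded on $(3/2,2)$.

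Finally I would extract the two conclusions. For $1<p<2$ we have $2-p>0$ and $p-1>0$, so the one-sided estimate $\log u(a_r(w))\le\log r+A'\sqrt{\log\tfrac1r\,\log\log\log\tfrac1r}$ and the Fundamental Inequality give $\mu(\Delta(w,r))\le C\,\tilde\lambda(r)$ for $\mu$-a.e.\ $w$ with $A=A(p)=(p-1)A'$ (enlarged if needed to be $\ge1$, still bounded on $(3/2,2)$); the standard density comparison then yields $\mu\ll H^{\tilde\lambda}$, which is (b). For $p>2$ we have $2-p<0$, and the lower bound $\log u(a_r(w))\ge\log r-A'\sqrt{\log\tfrac1r\,\log\log\log\tfrac1r}$ along a sequence $r\to0$ already gives $\dim\mu\le1$ by Billingsley's lemma; to upgrade this to the statement that $\mu$ is concentrated on a set of $\sigma$-finite $H^{1}$ measure one follows Wolff's refinement, using the square function estimate to control not merely the typical size of $g_k$ but the distribution of its maximal function, which allows the bulk of $\mu$ to be covered by balls whose radii sum controllably and so forces $\mu$ onto a countable union of sets of finite $H^{1}$ measure together with a $\mu$-null exceptional set; this gives (a). The main obstacle is the sharp, triple-logarithm law of the iterated logarithm for the genuinely nonlinear $p$-harmonic function: one must work throughout with the linearized equation for $\log|\nabla u|$, keep its coefficients and the associated Carleson measure under uniform control in $p$ (this is where quasiregularity of the complex gradient and the boundary Harnack inequality are indispensable), and handle the degeneracy set $\{\nabla u=0\}$; the Wolff-type upgrade in part (a), which goes beyond the Makarov gauge, is a second substantial difficulty.
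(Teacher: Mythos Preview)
This theorem is not proved in the present paper; it is quoted from \cite{L} as prior work. The paper's own contribution is Theorem~\ref{maintheorem}, which for the special case $f(\eta)=|\eta|^{p}$ yields only the weaker double-logarithm gauge $\lambda(r)=r\exp\bigl[A\sqrt{\log\tfrac1r\,\log\log\tfrac1r}\bigr]$ and, for $p>2$, only concentration on a set of $\sigma$-finite $H^{\lambda}$ (not $H^{1}$) measure. So there is no proof here to compare against.

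It is still worth contrasting your outline with the method the paper does use for Theorem~\ref{maintheorem}, since that is the closest analogue available. The paper does not run a martingale law of the iterated logarithm. Instead it proves a moment inequality (Lemma~\ref{mainlemma}): with $w=\max(\pm\log f(\nabla u),0)$,
\[
\int_{\{u=t\}}\frac{f(\nabla u)}{|\nabla u|}\,w^{2m}\,dH^{1}\ \le\ c_{\ast}^{m+1}m!\,\Bigl(\log\tfrac1t\Bigr)^{m},\qquad 0<t<\tfrac12,
\]
by testing the linearized equation $L\zeta=0$ (for which $\log f(\nabla u)$ is a sub/super solution according as $p\gtrless 2$) against $g^{2m-1}(u-t)_{+}$ and inducting on $m$. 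Summing in $m$ gives a subgaussian bound on level sets, and a Chebyshev plus Vitali covering argument then produces the double-log gauge. This route loses exactly one logarithm relative to the sharp LIL; the triple-log improvement and the $\sigma$-finite $H^{1}$ statement in \cite{L} require genuinely additional ideas beyond what is done here.

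Your plan is a reasonable sketch of the strategy in \cite{L}, and you correctly identify the two hard points: establishing a sharp LIL for the increments of $\log|\nabla u|$ (whose governing equation has only measurable coefficients, so the ``Carleson measure'' and square-function control need care), and the Wolff-type refinement needed to pass from $\hd\mu\le1$ to $\sigma$-finite $H^{1}$ concentration. Neither step is routine, and neither is supplied by the present paper.
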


This theorem is the complete extension of Makarov's theorem  to  the $p$-harmonic setting.

In this paper we obtain that, 

%------------------------------------------------------

\begin{theorem}
\label{maintheorem}
Let  $\Omega\subset\mathbb{C}$ be a bounded simply connected domain and let $N$ be a neighborhood of  $\partial\Omega$.
Fix $p$, $1<p<\infty$, let $f$ be homogeneous of degree $p$ and let $\nabla f$ be $\delta$ monotone for some $0<\delta\leq 1$. Let  $\hat{u}>0$ be a weak solution to
(\ref{flaplace}) in $\Omega\cap N$ with boundary value $0$ on $\partial\Omega$ in the $W^{1,p}(\Omega\cap N)$ Sobolev sense. Let $\hat{\mu}$ be the measure corresponding to
$\hat{u}$ as in (\ref{ast}) and put
\[
\lambda(r)=r\,\exp   \left[ A\sqrt{\log\frac{1}{r}\,\log\log\frac{1}{r}} \right]\,\ \mathrm{for}\, \ 0<r<10^{-6}.
\]
\begin{itemize}
\item[a)] If $p\geq 2$, there exists $A=A(p)\leq -1$ such that $\hat{\mu}$ is concentrated on a set of $\si-$finite $H^{\la}$ Hausdorff measure.
\item[b)] If  $1<p\leq 2$, there exists $A=A(p)\geq 1$, such that $\hat{\mu}$ is absolutely continuous with respect to $H^{\la}$ Hausdorff measure.
\end{itemize}
\end{theorem}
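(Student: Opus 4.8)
The plan is to follow the strategy that Makarov introduced for harmonic measure and that Bennewitz--Lewis, Lewis--Nystr\"om--Poggi-Corradini, and Lewis adapted to the $p$-harmonic setting, replacing the explicit structure of $|\nabla u|^{p}$ by the $\delta$-monotonicity of $\nabla f$ wherever quantitative ellipticity was used. First I would reduce to a model situation: using the Riemann map $\Phi\colon\mathbb{D}\to\Omega$ and the fact that $\hat u$ vanishes continuously on $\partial\Omega$, transplant $\hat u$ to the disk and study $v=\hat u\circ\Phi$ together with the boundary measure $\hat\mu$ pushed back to $\partial\mathbb{D}$. The heart of the matter is then a Caffarelli--type boundary Harnack principle and a $C^{1,\alpha}$-type estimate for $v$ near $\partial\mathbb{D}$ — these are exactly the tools that $\delta$-monotonicity of $\nabla f$ buys us, since by the theory of $\delta$-monotone mappings (see the role played by (\ref{star}) and (\ref{deltamonotone})) the gradient $\nabla\hat u$ is a quasiconformal-type deformation, so $\log|\nabla\hat u|$ is in BMO and oscillates slowly along Whitney chains. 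Concretely, I would establish that there is a constant $c=c(p,\delta)$ such that for $z_{0}\in\partial\Omega$ and $0<r<r_{0}$, writing $d(z)=\operatorname{dist}(z,\partial\Omega)$ and $A_{r}(z_{0})$ a corkscrew point,
\[
c^{-1}\,\frac{\hat u(A_{r}(z_{0}))}{r}\le |\nabla\hat u(A_{r}(z_{0}))|\le c\,\frac{\hat u(A_{r}(z_{0}))}{r},
\qquad
\hat\mu(B(z_{0},r))\approx r\,\frac{f(\nabla \hat u(A_r(z_0)))}{|\nabla \hat u(A_r(z_0))|}\approx r^{p-1}\Big(\frac{\hat u(A_r(z_0))}{r}\Big)^{p-1},
\]
the last comparison using the homogeneity (\ref{homogeneousofdegreep}) of $f$. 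This converts statements about $\hat\mu$ into statements about the boundary growth of $\hat u$.

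The second step is the law-of-the-iterated-logarithm estimate for $\log|\nabla \hat u|$ (equivalently for $\log\frac{\hat u(A_r)}{r}$) along a dyadic martingale built from the Riemann map or from Whitney cubes. Using the $\delta$-monotone ellipticity, $b(z):=\log|\nabla\hat u(z)|$ satisfies a Caccioppoli/reverse-H\"older inequality on Whitney balls that yields a uniform bound on oscillation, $\operatorname{osc}_{B(z,d(z)/2)} b \le C(p,\delta)$, and hence the increments of the associated martingale $M_{n}$ are uniformly bounded. One then invokes the standard LIL for martingales with bounded increments and quadratic characteristic $\langle M\rangle_{n}\lesssim n$: for a.e.\ boundary point $\zeta$,
\[
\limsup_{n\to\infty}\frac{M_{n}(\zeta)}{\sqrt{n\log\log n}}\le C,
\]
which, after exponentiating and summing the geometric series of scales, gives the Makarov-type two-sided control
\[
\log\hat\mu(B(z_0,r)) \le (p-1)\log r + C(p-1)\sqrt{\log\tfrac1r\,\log\log\tfrac1r}
\]
for $\hat\mu$-a.e.\ $z_0$ (and the reverse inequality with the opposite sign). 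Dividing by the sign-controlling factor $(p-1)$ — here is where the two cases split — and comparing with $\log\lambda(r)=\log r + A\sqrt{\log\frac1r\log\log\frac1r}$ pins down the admissible range of $A$: for $p\ge 2$ one needs $A\le -1$ (depending on $p$) to get $\hat\mu\ll H^{\lambda}$ to \emph{fail} in the right direction, i.e.\ to concentrate $\hat\mu$ on a $\sigma$-finite-$H^{\lambda}$ set, while for $1<p\le2$ one needs $A\ge 1$ to get $\hat\mu\ll H^{\lambda}$.

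Finally I would assemble these: given the a.e.\ bound above, a covering argument (Frostman/mass-distribution in one direction, a Vitali-type selection in the other) upgrades the pointwise density estimate to the measure-theoretic conclusions in (a) and (b), exactly as in the $p$-harmonic proofs of Lewis--Nystr\"om--Poggi-Corradini. The case $p=2$ is degenerate in the sense that $p-1=1$ and the structure equation may still be genuinely nonlinear (it is the $\delta$-monotone case, not the Laplacian), so one keeps the argument but the constant $A$ may not approach the Makarov value; this is why the statement allows the closed endpoints $p=2$ in both parts.

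The main obstacle I anticipate is Step 1: establishing the boundary Harnack principle, the non-degeneracy $|\nabla\hat u|\approx \hat u/d$, and the uniform interior oscillation bound for $\log|\nabla \hat u|$ using \emph{only} $\delta$-monotonicity of $\nabla f$ and $p$-homogeneity of $f$, rather than the explicit Euler--Lagrange structure of the $p$-Laplacian. The $p$-Laplacian proofs lean on $C^{1,\alpha}$ regularity, on the Serrin/Tolksdorf Harnack inequality, and on the fact that $\partial_{e}u$ solves a nice divergence-form linear equation; the analogues here must come from the quasiregularity of the $\mathbb{C}$-valued map $\nabla\hat u$ (a consequence of (\ref{deltamonotone})) and from the fact that the coefficient matrix $f_{\eta_i\eta_j}(\nabla\hat u)$ is, by homogeneity, a $0$-homogeneous function of $\nabla\hat u$ and therefore bounded and elliptic wherever $\nabla\hat u\ne0$, combined with a Rado--type argument to handle the (measure-zero) critical set. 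Once these regularity ingredients are in place the martingale LIL and the covering arguments are essentially the same as in the cited works, with all constants now depending on $(p,\delta)$.
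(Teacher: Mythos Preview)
Your proposal has a genuine gap at the point where the two cases $p\ge 2$ and $p\le 2$ are supposed to split. You attribute the dichotomy to ``dividing by the sign-controlling factor $(p-1)$'', but $p-1>0$ for every $p\in(1,\infty)$, so this cannot be what separates (a) from (b). A bounded-increment martingale LIL of the type you invoke is intrinsically two-sided: it would give both an upper and a lower oscillation bound on $\log|\nabla\hat u|$ along Whitney chains, and hence would prove \emph{both} conclusions for \emph{every} $p$, which is false. What actually drives the one-sidedness in the paper is a PDE fact you do not mention: with $v=\log f(\nabla u)$ and $L\zeta=\sum_{k,j}\partial_{x_k}\bigl(f_{\eta_j\eta_k}(\nabla u)\,\zeta_{x_j}\bigr)$ the linearized operator, one computes $Lv=(p-2)\,\mathcal F$ weakly with $\mathcal F\approx |\nabla u|^{p-4}\sum(u_{x_ix_j})^2\ge 0$. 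Thus $v$ is an $L$-\emph{subsolution} when $p>2$ and an $L$-\emph{supersolution} when $p<2$, and it is this sign of $(p-2)$, fed into a test-function argument with $g^{2m-1}(u-t)$ on the superlevel set $\{u>t\}$, that produces the one-sided moment bound $\int_{\{u=t\}}\frac{f(\nabla u)}{|\nabla u|}\,w^{2m}\,dH^1\le c_\ast^{m+1}m!\,(\log\tfrac1t)^m$ for $w=\max(\pm v,0)$. Summing in $m$ gives exponential integrability of $w^2/\log\tfrac1t$ on level curves, from which the $\sqrt{\log\tfrac1r\,\log\log\tfrac1r}$ gauge and the covering arguments follow.

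Two smaller points. First, $D^2f$ is $(p-2)$-homogeneous, not $0$-homogeneous; the ellipticity of the linearized operator is $|\nabla u|^{p-2}$-degenerate, and the paper handles this by first showing $\nabla u\neq 0$ everywhere for the capacitary function (a principle-of-the-argument computation using quasiregularity of $u_z$). Second, the paper does not transplant via the Riemann map; it works with level sets $\{u=t\}$ of the capacitary function $u$ for $D=\Omega\setminus\overline{B(z_0,d(z_0,\partial\Omega)/4)}$ and uses the fundamental inequality $|\nabla u|\approx u/d$ directly. Your Step~1 ingredients (boundary Harnack, $|\nabla u|\approx u/d$, Harnack for $\log|\nabla u|$) are indeed all established in the paper from $\delta$-monotonicity and $p$-homogeneity, so that part of your outline is on target; what is missing is the sub/super-solution identity for $\log f(\nabla u)$ and the level-set moment argument that replaces the martingale LIL.
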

Note that Theorem \ref{maintheorem} and the definition of $\hd{\hat{\mu}}$ imply the following corollary.

\begin{corollary}
Given $p$, $1<p<\infty$, let $\hat{u},\hat{\mu}$ be as in Theorem \ref{maintheorem}, and suppose $\Omega$ is a simply connected domain. Then
$\hd{\hat{\mu}}\leq 1$ for $2\leq p<\infty$, while $\hd{\hat{\mu}}\geq 1$ for $1<p\leq 2$. 
\end{corollary}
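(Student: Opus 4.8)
The plan is to read the corollary off Theorem~\ref{maintheorem}, using only the shape of the gauge function $\lambda$ together with a routine comparison of Hausdorff measures attached to different gauges. Write $\lambda(r)=r\exp\!\big[A\sqrt{\log(1/r)\,\log\log(1/r)}\big]$. Since $\sqrt{L\log L}=o(L)$ as $L\to\infty$, one has, for every $\beta>0$ and every real $c$, $\;r^{\beta}\exp\!\big[c\sqrt{\log(1/r)\,\log\log(1/r)}\big]\to0$ as $r\to0$ (the exponent equals $-\beta\log(1/r)+c\sqrt{\log(1/r)\log\log(1/r)}\to-\infty$). Applying this with $\beta=\alpha-1$ shows $r^{\alpha}/\lambda(r)\to0$ for every $\alpha>1$, and with $\beta=1-\alpha$ shows $\lambda(r)/r^{\alpha}\to0$ for every $\alpha<1$ --- \emph{regardless of the value of $A$}. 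I will combine this with the elementary comparison fact: if two gauges satisfy $\nu_{1}(r)/\nu_{2}(r)\to0$ as $r\to0$ and $H^{\nu_{2}}(E)<\infty$, then $H^{\nu_{1}}(E)=0$. (Proof: fix $\varepsilon>0$, pick $\delta_{0}$ with $\nu_{1}\le\varepsilon\nu_{2}$ on $(0,\delta_{0})$; for each $\delta\le\delta_{0}$ choose a cover of $E$ by balls of radius $<\delta$ with $\sum\nu_{2}(r_{i})\le H^{\nu_{2}}(E)+1$, so $\phi^{\nu_{1}}_{\delta}(E)\le\sum\nu_{1}(r_{i})\le\varepsilon(H^{\nu_{2}}(E)+1)$; let $\delta\to0$, then $\varepsilon\to0$.)

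\textbf{The case $2\le p<\infty$.} Theorem~\ref{maintheorem}(a) yields a Borel set $F\subset\partial\Omega$ with $\hat{\mu}(F)=\hat{\mu}(\partial\Omega)$ that is $\sigma$-finite for $H^{\lambda}$; write $F=\bigcup_{j\ge1}F_{j}$ with $H^{\lambda}(F_{j})<\infty$. Fix $\alpha>1$. Since $r^{\alpha}/\lambda(r)\to0$, the comparison fact gives $H^{\alpha}(F_{j})=0$ for every $j$, hence $H^{\alpha}(F)=0$ by countable subadditivity. So $F$ is a Borel subset of $\partial\Omega$ with $H^{\alpha}(F)=0$ and $\hat{\mu}(F)=\hat{\mu}(\partial\Omega)$, i.e.\ $\alpha$ belongs to the set whose infimum is $\hd{\hat{\mu}}$. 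As $\alpha>1$ was arbitrary, $\hd{\hat{\mu}}\le1$.

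\textbf{The case $1<p\le2$.} Theorem~\ref{maintheorem}(b) gives $\hat{\mu}\ll H^{\lambda}$. Suppose, for contradiction, that $\hd{\hat{\mu}}<1$. For fixed Borel $E$ and $\beta\ge\alpha$ one has $H^{\beta}(E)\le H^{\alpha}(E)$ (covers by balls of radius $<1$ give $\sum r_{i}^{\beta}\le\sum r_{i}^{\alpha}$), so the set defining $\hd{\hat{\mu}}$ is an up-set; hence there are $\alpha<1$ and a Borel set $E\subset\partial\Omega$ with $H^{\alpha}(E)=0$ and $\hat{\mu}(E)=\hat{\mu}(\partial\Omega)$. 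Since $\lambda(r)/r^{\alpha}\to0$ and $H^{\alpha}(E)=0<\infty$, the comparison fact (with $\nu_{1}=\lambda$, $\nu_{2}=r^{\alpha}$) gives $H^{\lambda}(E)=0$, whence $\hat{\mu}(E)=0$ by absolute continuity. But $\hat{\mu}$ is not the zero measure: otherwise (\ref{ast}) would make the zero-extension of $\hat{u}$ a nonnegative weak solution of (\ref{flaplace}) on the open set $N$; fixing $z_{0}\in\partial\Omega$ and a ball $B(z_{0},\rho)\subset N$, this solution is $\ge0$ on $B(z_{0},\rho)$, vanishes at the interior point $z_{0}$ (as $\hat{u}\equiv0$ on $N\setminus\Omega$), yet is positive on $B(z_{0},\rho)\cap\Omega\neq\emptyset$, contradicting the strong maximum principle / Harnack inequality for (\ref{flaplace}). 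Thus $\hat{\mu}(\partial\Omega)>0$, so $\hat{\mu}(E)=\hat{\mu}(\partial\Omega)>0$, contradicting $\hat{\mu}(E)=0$. Therefore $\hd{\hat{\mu}}\ge1$.

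No genuinely hard step arises: the entire analytic content is carried by Theorem~\ref{maintheorem}. The two points I would isolate as short observations are (i) that $\lambda$ sits precisely at the ``$r^{1}$'' scale in the asymptotic sense above --- immediate from $\sqrt{L\log L}=o(L)$ --- and (ii) that $\hat{\mu}$ is nontrivial, which is exactly what turns ``$\hat{\mu}(E)=0$'' into a real contradiction in the case $1<p\le2$, and which follows from the Harnack inequality / strong maximum principle for (\ref{flaplace}) already in use in the paper.
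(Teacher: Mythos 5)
Your proposal is correct and is exactly the deduction the paper intends: the paper offers no written proof of the corollary beyond the remark that it "follows from Theorem \ref{maintheorem} and the definition of $\hd{\hat{\mu}}$", and your argument — comparing the gauge $\lambda$ with power gauges $r^{\alpha}$ via $\sqrt{L\log L}=o(L)$ and the standard two-gauge comparison of Hausdorff measures — is the standard way to carry that out. The only detail worth noting is that the nontriviality of $\hat{\mu}$ also follows directly from Lemma \ref{uismu} together with $\hat{u}>0$, which is marginally shorter than your maximum-principle argument but changes nothing of substance.
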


%------------------------------------------------------
This paper is organized as follows.
In section \ref{lemmas} we obtain some regularity results for $f$ and $u$. Indeed, in subsection \ref{basicregresult} we first introduce some notation which we will 
use throughout this paper and we mention some regularity properties of  $f$ satisfying  (\ref{homogeneousofdegreep}) and (\ref{deltamonotone})
suitable for use in elliptic regularity theory.

In subsection \ref{advreg} we study a variational problem and indicate some properties of  
weak solutions to the  corresponding Euler Lagrange equation: maximum principle, Harnack inequality, interior H\"{o}lder continuity of a solution,
and H\"{o}lder continuity near the boundary of $\Omega$.
After that we study the behavior of $\hat{u}$ near $\partial\Omega$ and the relationship between $\hat{u}$ and $\hat{\mu}$ as in (\ref{ast}).
Using this relationship  we see that $\hd{\hat{\mu}}$ is independent of the corresponding $\hat{u}$. 

In subsection \ref{madvreg}, we use elliptic and quasiregularity theory to derive more advanced regularity properties of $\hat{u}$: 
quasiregulariy of  $\hat{u}_{z}$,  H\"{o}lder continuity of $\nabla \hat{u}$, and  $\nabla \hat{u}$ locally in $W^{1,2}$, 
so $\hat{u}$ is  almost everywhere a pointwise solution to (\ref{flaplace}). We also show for a certain $u$ 
that $\nabla u\neq 0 $ near $\partial\Omega$. Next we outline a proof in \cite{LNP} which shows for a certain $u$ as in 
Theorem \ref{maintheorem} that
 $\nabla u$ satisfies the so called fundamental inequality. Using this inequality and previous results we first obtain that
$u$ and $\nabla u$ are weak solutions to a certain pde and then that $\log f(\nabla u) $ is a weak sub, super or solution to this pde, depending on whether $p>2, <2$, or $=2$.

In section \ref{proofofmain} we prove Theorem \ref{maintheorem}.

In general we follow the game plan of Lewis and coauthors who in turn were influenced by the work of Makarov. 
However the equation we consider is more complicated and has less regularity than the $p$ Laplacian. Thus we had to overcome numerous procedural difficulties 
not encountered in \cite{LNP}.

\section{Some Lemmas}
\label{lemmas}
Throughout this paper various positive constants are denoted by $c$ and they may differ even on the same line. 
The dependence on parameters is expressed, for example, by $c=c(p,f)\geq 1$. Also  
$g\approx h$ means that there is a constant $c$ such that
\[
\tfrac{1}{c} h \leq g \leq c\, h.
\]
Let $B(z,r)$ denote the disk in $\mathbb{R}^{2}$ or $\mathbb{C}$
with center $z$ and radius $r$ and let $\nu$ be two dimensional Lebesgue measure. 

Let $\eta=\begin{bmatrix}
                \eta_{1} \\
		 \eta_{2}
         \end{bmatrix}
$
be a 2 x 1 column matrix and let $\eta^{\mbox{\tiny{T}}}=\begin{bmatrix} \eta_{1} & \eta_{2}\end{bmatrix}$ denote the 
transpose of $\eta$. 
We specifically denote the unit disk, $B(0,1)$, by $\mathbb{D}$.
$\Omega$ will always denote an open set and often $\Omega$ is a simply connected domain. 
That is $\Omega$ is an open connected domain whose complement is connected. 
%%%%%%%%%%%%%%%%%%%%%%%%%%%%%%%%%%%%%%%%%%%%%%%%%%%%%%%%%%%%%%
\subsection[Basic Regularity Results for f]{Basic Regularity Results for $\boldsymbol{f}$}
\label{basicregresult}
%%%%%%%%%%%%%%%%%%%%%%%%%%%%%%%%%%%%%%%%%%%%%%%%%%%%%%%%%%%%%%

In this subsection we state some regularity result for $f$. Let $f$ be as in (\ref{homogeneousofdegreep}), (\ref{deltamonotone}). Then $\nabla f$ has a 
representative in $L^{1}(\mathbb{C})$ (also denoted by $\nabla f$) that is $\delta-$monotone on $\mathbb{C}$. From homogeneity of $f$ and 
Kovalev's theorem in \cite{K} we see that $\nabla f$
is in fact a $K-$quasiconformal mapping in $\mathbb{C}$ where

\[
 K=\frac{1+\sqrt{1-\delta^{2}}}{1-\sqrt{1-\delta^{2}}}.
\]
So the eigenvalues of the Hessian matrix of $\nabla f$ either both exist and are zero or have ratios bounded above by $K$ and below by $1/K$.

As $f$ is homogeneous of degree $p$, i.e $f(\eta)=|\eta|^{p}f(\eta/|\eta|)$ when $\eta\in\mathbb{C}\setminus\{0\}$, if we introduce polar 
coordinates; $r=|\eta|$, $\tan(\theta)=\eta_{2}/\eta_{1}$, then 
\[
f(r,\theta)=r^{p}f(\cos(\theta), \sin(\theta)).
\]
Hence first and second derivatives of $f$ along rays through the origin are 
\begin{align}
\label{frandfrr}
f_{r}=pr^{p-1}f(\cos(\theta), \sin(\theta))\, \, \, \mathrm{and}\, \, \, f_{rr}=p(p-1)r^{p-2}f(\cos(\theta), \sin(\theta)).
\end{align}
$K-$quasiregularity of $\nabla f$ implies that $f$ is continuous in $\mathbb{C}$. Since $f>0$ it 
follows that $f(\cos(\theta), \sin(\theta))$ is bounded above and below by constants $1\leq M$ and $1/M$ respectively. 
We conclude from this fact and (\ref{frandfrr}) that  
\begin{align}
\label{frrbddM}
\tfrac{1}{M}p(p-1)r^{p-2}\leq f_{rr}\leq M\, p(p-1)r^{p-2}.
\end{align}
From (\ref{frrbddM}) and $K-$quasiregularity of $\nabla f$  it follows for a.e. $\eta\in\mathbb{C}$ and all $\xi$ with $|\xi|=1$ that
\begin{align}
\label{frrbddKM}
\tfrac{1}{MK}p(p-1)|\eta|^{p-2}\leq f_{rr}\approx f_{\xi\xi}(\eta)=\eta^{\mbox{\tiny{T}}}\, D^{2}f\, \eta\leq MK\, p(p-1)|\eta|^{p-2}
\end{align}
where $D^{2}f=(f_{\eta_{i}\eta_{j}})$. It follows from homogeneity of $f$ and (\ref{frrbddKM}) for some $M'\geq 1$ that
\begin{align}
\label{frrM'}
\tfrac{1}{M'}|\eta|^{p}\leq \min\{f(\eta), |\eta||\nabla f(\eta)|\}\leq\max\{f(\eta), |\eta||\nabla f(\eta)|\}\leq M'|\eta|^{p}.
\end{align}
Using (\ref{frrbddM}) we also see from basic calculations that for $\eta, \eta'\in\mathbb{C}$,
\begin{align}
\label{star}
 \begin{split}
  \tfrac{1}{c} (|\eta|+|\eta'|)^{p-2} |\eta-\eta'|^{2}\leq  \langle\nabla f(\eta)-\nabla f(\eta'), \eta-\eta'\rangle \leq c(|\eta|+|\eta'|)^{p-2} |\eta-\eta'|^{2}.
 \end{split}
\end{align}
Let $\theta(z)$ be the standard mollifier, i.e;
\begin{align*}
 \begin{split}
\theta(z)=\left\{\begin{array}{ll}
           c\,\,  \mathrm{exp}(\frac{1}{|z|^{2}-1}) & \mathrm{if}\, \ |z|<1 \\
	               0 & \mathrm{if}\, \ |z|\geq 1
          \end{array}
\right.
 \end{split}
\end{align*}
Let $f_{\varepsilon}=f\ast \theta_{\varepsilon}$ where
\begin{align}
\label{convprop}
 f_{\varepsilon}(z)=\int\limits_{\mathbb{C}}\theta_{\varepsilon}(z-w)f(w)\rd w = \int\limits_{B(0,\varepsilon)}\theta_{\varepsilon}(w)f(z-w)\rd w
\end{align}
for $z\in\mathbb{C}$. For later use we note that (\ref{star}) and the definition of $f_{\varepsilon}$ easily imply
\begin{align}
\label{epsilonstar}
 \begin{split}
  \tfrac{1}{c} (|\eta|+|\eta'|+\varepsilon)^{p-2} |\eta-\eta'|^{2}\leq  
\langle\nabla f_{\varepsilon}(\eta)-\nabla f_{\varepsilon}(\eta'), &\eta-\eta'\rangle \\ \leq 
&c\, (|\eta|+|\eta'|+\varepsilon)^{p-2} |\eta-\eta'|^{2}.
 \end{split}
\end{align}

Finally, we state for further use a lemma which is a direct consequence of (\ref{frrbddKM}) and (\ref{frrM'}) for $u\in W^{1,1}(\Omega)$.
\begin{lemma}
\label{fishomdp}
For some constants $c, c', c''\geq 1$ depending only on $f$, we have for a.e $z\in \Omega$,
\begin{eqnarray*}
&&\frac{1}{c}|\nabla u|^{p}\leq f(\nabla u) \leq c |\nabla  u|^{p},\\
&&\label{f1} \frac{1}{c'}|\nabla u|^{p-1}\leq|\nabla f(\nabla  u)| \leq c' |\nabla  u|^{p-1},\\
&&\label{f2} \frac{1}{c''}|\nabla u|^{p-2}\leq\|D^{2} f(\nabla  u)\| \leq c'' |\nabla  u|^{p-2},
\end{eqnarray*}
where $\|D^{2}f(\nabla u)\|$ denotes the absolute value of an arbitrary second derivative of $f$ evaluated at $\nabla u(z)$.
\end{lemma}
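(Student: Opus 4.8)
The plan is to derive each of the three two-sided estimates directly from the pointwise bounds already established for $f$ on $\mathbb{C}$, applied with $\eta=\nabla u(z)$ at almost every $z\in\Omega$. First I would record that the bound on $f(\nabla u)$ is literally a restatement of the left and right inequalities in (\ref{frrM'}): taking $\eta=\nabla u(z)$ there gives $\tfrac{1}{M'}|\nabla u|^p\le f(\nabla u)\le M'|\nabla u|^p$, so one may take $c=M'$. At points where $\nabla u(z)=0$ all three claims are trivial (both sides vanish, interpreting $|\nabla u|^{p-2}\cdot 0 = 0$), so we may assume $\nabla u(z)\neq 0$ in what follows.

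Next I would handle the gradient estimate. Again by (\ref{frrM'}) with $\eta=\nabla u(z)$ we have $\tfrac{1}{M'}|\nabla u|^p\le |\nabla u||\nabla f(\nabla u)|\le M'|\nabla u|^p$; dividing through by $|\nabla u|\neq 0$ yields $\tfrac{1}{M'}|\nabla u|^{p-1}\le |\nabla f(\nabla u)|\le M'|\nabla u|^{p-1}$, so $c'=M'$ works. For the Hessian estimate I would use (\ref{frrbddKM}): for a.e.\ $\eta$ and every unit vector $\xi$, $\tfrac{1}{MK}p(p-1)|\eta|^{p-2}\le \xi^{\mathrm{T}}D^2 f(\eta)\,\xi \le MK\,p(p-1)|\eta|^{p-2}$, and since $D^2 f$ is (a.e.) a symmetric $2\times 2$ matrix, the operator norm $\|D^2 f(\eta)\|$ is comparable, with absolute constants, to $\sup_{|\xi|=1}|\xi^{\mathrm{T}}D^2 f(\eta)\xi|$ and hence to $|\eta|^{p-2}$. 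The only mild point to be careful about is that "$\|D^2 f(\nabla u)\|$ denotes the absolute value of an arbitrary second derivative," i.e.\ any single entry $f_{\eta_i\eta_j}(\nabla u)$; each such entry is bounded in absolute value by the operator norm and, by picking $\xi$ among the coordinate directions and their normalized sums/differences, the operator norm is in turn controlled by a fixed multiple of the largest entry, so all of these quantities are comparable up to absolute constants. Absorbing $p(p-1)$ and the absolute comparison constant into $c''$ gives $\tfrac{1}{c''}|\nabla u|^{p-2}\le \|D^2 f(\nabla u)\|\le c''|\nabla u|^{p-2}$ at a.e.\ $z$ with $\nabla u(z)\neq 0$.

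Finally I would note the measure-theoretic bookkeeping: $\nabla f\in W^{1,1}(B(0,R))$ for every $R$ and is $K$-quasiconformal, so $D^2 f$ exists a.e.\ on $\mathbb{C}$ and (\ref{frrbddKM}), (\ref{frrM'}) hold for a.e.\ $\eta$; since $u\in W^{1,1}(\Omega)$, $\nabla u$ is defined a.e.\ on $\Omega$, and composing with $\nabla f$ and $D^2 f$ is legitimate a.e.\ on the set where $\nabla u\neq 0$ together with the trivial set where it vanishes. There is no genuine obstacle here — the statement is essentially a transcription of the inequalities (\ref{frrbddKM}) and (\ref{frrM'}) into the language of $\nabla u$ — so the only thing requiring a word of care is the equivalence between the operator norm of $D^2 f$, an arbitrary entry of $D^2 f$, and $|\eta|^{p-2}$, all of which differ by purely dimensional constants.
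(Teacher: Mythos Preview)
Your proposal is correct and matches the paper's approach exactly: the paper gives no separate proof but simply declares the lemma ``a direct consequence of (\ref{frrbddKM}) and (\ref{frrM'}),'' which is precisely what you carry out by substituting $\eta=\nabla u(z)$. One small caveat worth noting: your closing claim that ``all of these quantities are comparable up to absolute constants'' slightly overstates what you proved, since an individual off-diagonal entry $f_{\eta_1\eta_2}$ can vanish (e.g.\ for $f(\eta)=|\eta|^p$ along the axes) and thus admits no uniform lower bound---the two-sided bound really holds for the operator norm (equivalently the largest eigenvalue or the maximal entry), which is also how the lemma is actually used later in the paper.
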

%%%%%%%%%%%%%%%%%%%%%%%%%%%%%%%%%%%%%%%%%%%%%%%%%%%%%%%%%%%%%%%%%%
\subsection{Interior and boundary estimates for $\hat{u}$}
\label{advreg}
We refer to \cite{BL} for references to the proofs of Lemmas \ref{caccioppolitypeinequality} - \ref{uisholder}.

Let $w\in\partial\Omega$ and $0<r<\, \mbox{diam}\, \, \Omega$. Moreover, let $f$ be as in Theorem \ref{maintheorem}. We also put $f(0)=0$. 
In this subsection we begin by stating some interior and boundary estimates for $\hat{u}$ a positive weak solution to (\ref{flaplace}) in $B(w,4r)\cap\Omega$
with $\hat{u}=0$ on $B(w,4r)\cap\partial\Omega$ in the Sobolev sense.
\begin{lemma}
\label{caccioppolitypeinequality}
For fixed $p$, $1<p<\infty$, let $\hat{u}, f, \Omega, w, r$ be defined as above. Then
\begin{align}
 \label{21}
\tfrac{1}{c}r^{p-2}\int\limits_{B(w, \frac{r}{2})} f(\nabla \hat{u}) \rd\nu  \leq \esssup\limits_{B(w,r)} \hat{u}^{p} \leq c\, \frac{1}{r^{2}}\int\limits_{B(w,r)}\hat{u}^{p}\rd\nu.
\end{align}
\end{lemma}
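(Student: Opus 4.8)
The plan is to prove the Caccioppoli-type inequality (\ref{21}) in two separate halves, each resting on the regularity of $f$ recorded in Lemma \ref{fishomdp}. For the right-hand inequality $\esssup_{B(w,r)}\hat u^p\le c\, r^{-2}\int_{B(w,r)}\hat u^p\,\rd\nu$, the key observation is that $\hat u$ (extended by $0$ on $N\setminus\Omega$) is a nonnegative weak subsolution to a uniformly elliptic equation in divergence form: by Lemma \ref{fishomdp}, $\nabla\cdot(\nabla f(\nabla\hat u))=0$ with the matrix $D^2f(\nabla\hat u)$ comparable to $|\nabla\hat u|^{p-2}\,\mathrm{Id}$, so after raising to the power $p/(p-1)$ or directly testing the equation one gets that $\hat u^p$ satisfies a sub-mean-value property. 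Concretely, I would test (\ref{flaplace}) with $\phi=\hat u\,\zeta^p$ for $\zeta\in C_0^\infty(B(w,r))$ a standard cutoff with $\zeta\equiv 1$ on $B(w,r/2)$ and $|\nabla\zeta|\le c/r$, use the ellipticity bounds from Lemma \ref{fishomdp} together with Young's inequality to absorb the gradient term, obtaining a Caccioppoli inequality $\int_{B(w,r/2)}|\nabla\hat u|^p\,\zeta^p\,\rd\nu\le c\,r^{-p}\int_{B(w,r)}\hat u^p\,\rd\nu$; then Moser iteration (or the De Giorgi method) applied to the subsolution $\hat u$ of the quasilinear equation with $p$-growth yields the local boundedness estimate $\esssup_{B(w,r/2)}\hat u\le c\,(r^{-2}\int_{B(w,r)}\hat u^p\,\rd\nu)^{1/p}$, which is exactly the right-hand inequality after relabeling radii.

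For the left-hand inequality $c^{-1}r^{p-2}\int_{B(w,r/2)}f(\nabla\hat u)\,\rd\nu\le\esssup_{B(w,r)}\hat u^p$, the natural route is again to test the equation with a cutoff, but now keeping the gradient term on the left. Choosing $\zeta\in C_0^\infty(B(w,r))$ with $\zeta\equiv 1$ on $B(w,r/2)$, $|\nabla\zeta|\le c/r$, and testing (\ref{flaplace}) with $\phi=\hat u\,\zeta^2$ (legitimate since $\hat u=0$ on $B(w,4r)\cap\partial\Omega$ in the Sobolev sense, so $\hat u\zeta^2\in W_0^{1,p}$ of the relevant domain), one obtains
\[
\int_{\Omega}\langle\nabla f(\nabla\hat u),\nabla\hat u\rangle\,\zeta^2\,\rd\nu=-2\int_{\Omega}\langle\nabla f(\nabla\hat u),\nabla\zeta\rangle\,\hat u\,\zeta\,\rd\nu.
\]
By Lemma \ref{fishomdp}, the left side is bounded below by $c^{-1}\int|\nabla\hat u|^p\zeta^2\,\rd\nu$ and, by homogeneity, $\langle\nabla f(\nabla\hat u),\nabla\hat u\rangle=p\,f(\nabla\hat u)$ (Euler's relation), so it is comparable to $\int f(\nabla\hat u)\zeta^2\,\rd\nu$. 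The right side is bounded via $|\nabla f(\nabla\hat u)|\le c|\nabla\hat u|^{p-1}$ and Hölder/Young by $\tfrac12\cdot c^{-1}\int|\nabla\hat u|^p\zeta^2\,\rd\nu + c\,r^{-p}\int_{B(w,r)}\hat u^p\,\rd\nu$, and the first piece is absorbed into the left. This gives $\int_{B(w,r/2)}f(\nabla\hat u)\,\rd\nu\le c\,r^{-p}\int_{B(w,r)}\hat u^p\,\rd\nu\le c\,r^{-p}\cdot\nu(B(w,r))\cdot\esssup_{B(w,r)}\hat u^p=c\,r^{-(p-2)}\esssup_{B(w,r)}\hat u^p$, which rearranges to the claimed left-hand bound.

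The main obstacle is the local boundedness step in the first half: because $f$ only satisfies the regularity in Lemma \ref{fishomdp} (Hessian comparable to $|\nabla\hat u|^{p-2}$) rather than the smoothness of the pure $p$-Laplacian, I must make sure the Moser/De Giorgi iteration is run purely in terms of the structural bounds $|\nabla f(\eta)|\approx|\eta|^{p-1}$ and the Caccioppoli inequality, using truncations $\hat u_k=\min(\hat u,k)$ or powers $\hat u^{\beta}$ as test functions and verifying they are admissible (which is where $\hat u\ge 0$ and the boundary condition $\hat u=0$ on $\partial\Omega$ in the Sobolev sense are used, since extending $\hat u$ by zero keeps it a global subsolution on $B(w,4r)$). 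The fact that this is the standard local-boundedness theorem for $\mathcal A$-harmonic functions with $p$-growth — exactly the theory of \cite{HKM}, Chapter 3 — means I would simply invoke it, as the excerpt explicitly says "We refer to \cite{BL} for references to the proofs of Lemmas \ref{caccioppolitypeinequality} - \ref{uisholder}"; so in practice the proof reduces to checking that $f$ here satisfies the hypotheses of that general theory, which is precisely the content of Lemma \ref{fishomdp}, and then citing it.
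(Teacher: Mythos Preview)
Your proposal is correct and in fact goes well beyond what the paper does: the paper gives no proof of this lemma at all, simply writing ``We refer to \cite{BL} for references to the proofs of Lemmas \ref{caccioppolitypeinequality}--\ref{uisholder}.'' You have correctly identified this, and your concluding remark---that the argument reduces to verifying via Lemma \ref{fishomdp} that $f$ satisfies the structural $p$-growth hypotheses of the standard $\mathcal{A}$-harmonic theory in \cite{HKM} and then citing that theory---is exactly the paper's stance.

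One small technical point in your outline: when testing with $\hat u\,\zeta^{2}$ for the left-hand (Caccioppoli) inequality, the Young absorption step is clean only for $1<p\le 2$; for $p>2$ the exponent mismatch between $\zeta^{2}$ and $\zeta^{p/(p-1)}$ prevents direct absorption. The standard remedy is to test with $\hat u\,\zeta^{p}$ instead, after which Young's inequality with exponents $p/(p-1)$ and $p$ absorbs cleanly for all $1<p<\infty$. This is a cosmetic adjustment and does not affect the validity of your approach.
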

\begin{lemma}[Harnack's Inequality]
\label{harnackinequality}
Let $\hat{u},\Omega, r, w$ be as in Lemma \ref{caccioppolitypeinequality}. Then there is a constant $c=c(p,f)$ such that
\begin{align}
\esssup\limits_{B(\tilde{w},s)}\hat{u} \leq c\essinf\limits_{B(\tilde{w},s)} \hat{u}.
\end{align}
whenever $B(\tilde{w}, 2s)\subset B(w,4r)\cap\Omega$.
\end{lemma}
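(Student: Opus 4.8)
The statement to prove is Harnack's inequality for nonnegative weak solutions of the Euler--Lagrange equation \eqref{flaplace}, under the structural assumptions \eqref{frrM'} and \eqref{star} that make the operator a genuinely quasilinear elliptic equation of $p$-Laplacian type.

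\smallskip

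The plan is to follow the classical De Giorgi--Nash--Moser iteration, adapted to the degenerate/singular setting, exactly as carried out for the $p$-Laplacian in Serrin's and Trudinger's work and summarized in \cite{HKM}. First I would record the key consequence of Lemma~\ref{fishomdp} and \eqref{star}: the divergence-form operator $\mathcal{A}(z,\nabla u) := \nabla f(\nabla u)$ satisfies the standard monotonicity and growth bounds $\langle \mathcal{A}(z,\eta),\eta\rangle \geq c^{-1}|\eta|^p$ and $|\mathcal{A}(z,\eta)| \leq c|\eta|^{p-1}$, so that $\hat u$ is a nonnegative weak solution of a $p$-Laplacian-type equation in the sense of \cite[Chapter~3]{HKM}. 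Since all of the hypotheses needed for the Moser machinery depend on $f$ only through these two structural inequalities, the proof is then essentially a citation plus a remark that the constant $c$ depends only on $p$ and $f$ (through $M'$ and the constant in \eqref{star}, hence ultimately through $\delta$).

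\smallskip

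The substantive steps, in order, are: (i) a Caccioppoli-type energy estimate for powers $\hat u^{\beta}$ and for $\log \hat u$ --- the first half is essentially Lemma~\ref{caccioppolitypeinequality}, and for $\log\hat u$ one tests \eqref{uisweaksoln} with $\varphi = \zeta^p \hat u^{1-p}$ on $B(\tilde w, 2s)$ to get $\int_{B(\tilde w, s)} |\nabla \log \hat u|^p \,\rd\nu \leq c\, s^{-p}\cdot s^2$; (ii) feed the Caccioppoli inequality into Sobolev's inequality to obtain a reverse-Hölder / self-improving estimate, and iterate over a nested family of balls shrinking to $B(\tilde w,s)$ to get the sup bound $\esssup_{B(\tilde w,s)} \hat u \leq c\big(\fint_{B(\tilde w,\frac32 s)} \hat u^{q}\,\rd\nu\big)^{1/q}$ for small $q>0$, and symmetrically a bound on $(\essinf \hat u)^{-1}$ in terms of $\big(\fint \hat u^{-q}\big)^{1/q}$ applied to $\hat u$ (which is also a solution of an equation of the same type, since the operator is odd-type in $\nabla u$ only through $f$ being homogeneous --- more precisely $-\hat u$ is not a solution but $\hat u$ itself with the constant-shift trick, or one works with $\min(\hat u, k)$ and lets $k\to\infty$, applying the lower Moser estimate to $\hat u$ directly as is standard); (iii) close the gap between the positive and negative small exponents via the John--Nirenberg lemma applied to $\log\hat u \in \mathrm{BMO}$, which follows from the $\log\hat u$ Caccioppoli estimate in step (i); chaining (ii) and (iii) yields $\esssup_{B(\tilde w,s)}\hat u \leq c\,\essinf_{B(\tilde w,s)}\hat u$. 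I would state each piece on the scale of $B(\tilde w, 2s)$, using $B(\tilde w,2s)\subset B(w,4r)\cap\Omega$ to guarantee $\hat u$ solves the equation on the whole enlarged ball.

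\smallskip

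The main obstacle --- and really the only place genuine care is needed rather than a citation --- is tracking that every constant is controlled purely by $p$ and the ellipticity constants from \eqref{frrM'}, \eqref{star}, \eqref{frrbddKM}, since $f$ is not smooth and not uniformly elliptic; in particular one must be slightly careful when $p<2$ (singular case, where $|\nabla u|^{p-2}$ blows up) versus $p>2$ (degenerate case), but the structural inequalities \eqref{star} and Lemma~\ref{fishomdp} are precisely designed so that the two-sided bounds hold uniformly, and the Moser iteration goes through verbatim in both regimes. A secondary technical point is the usual justification that the test functions $\zeta^p \hat u^{\beta}$ and $\zeta^p \hat u^{1-p}$ are admissible: one truncates $\hat u$ at levels $\varepsilon$ and $k$, derives the estimates with constants independent of $\varepsilon,k$, and passes to the limit --- this is standard and I would simply cite \cite[Chapter~3]{HKM} rather than reproduce it. Given that Lemmas~\ref{caccioppolitypeinequality}--\ref{uisholder} are explicitly attributed to \cite{BL} in the text preceding the statement, the cleanest exposition is to note that the argument there for the $p$-Laplacian uses only the structural bounds of Lemma~\ref{fishomdp} and \eqref{star}, which our $f$ also satisfies, and hence applies unchanged.
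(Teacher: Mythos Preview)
Your proposal is correct and matches the paper's approach: the paper gives no proof for this lemma at all, stating just before Lemma~\ref{caccioppolitypeinequality} that ``We refer to \cite{BL} for references to the proofs of Lemmas \ref{caccioppolitypeinequality}--\ref{uisholder}.'' Your sketch---verify that $\mathcal{A}(\eta)=\nabla f(\eta)$ satisfies the standard $p$-growth and monotonicity conditions via Lemma~\ref{fishomdp} and \eqref{star}, then invoke the Moser iteration from \cite{HKM} or \cite{BL}---is exactly the intended argument, and your final paragraph already identifies this as the cleanest presentation.
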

Next we state local H\"older continuity of $\hat{u}$.
\begin{lemma}
\label{uisholder}
Let $\hat{u},\Omega, w, r$ be as in Lemma \ref{caccioppolitypeinequality}. 
Let $0<s_{0}<\infty$ and suppose that $B(w_{0},s_{0})\subset B(w,4r)\cap\Omega$. 
Then for $0<s<s_{0}$ there is a constant
$0<\alpha=\alpha(p,f)\leq 1$ such that
\[
 |\hat{u}(\tilde{w})-\hat{u}(\hat{w})|\leq c \left(\frac{|\tilde{w}-\hat{w}|}{s}\right)^{\alpha} \esssup\limits_{B(w_{0}, s_{0})} \hat{u}.
\]
\end{lemma}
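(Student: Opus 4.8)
\textbf{Proof proposal for Lemma~\ref{uisholder}.} The plan is to obtain Lemma~\ref{uisholder} by the De Giorgi--Nash--Moser method, exactly as one does for the $p$-Laplacian, using that the operator $A(\xi)=\nabla f(\xi)$ inherits from Lemma~\ref{fishomdp} and (\ref{star}) all the structure conditions of a degenerate elliptic operator of $p$-Laplace type in the sense of \cite{HKM}. First I would record the three structure inequalities: for a.e. $\xi,\xi'\in\mathbb{C}$ one has the strict monotonicity $\langle A(\xi)-A(\xi'),\xi-\xi'\rangle>0$ for $\xi\neq\xi'$ (sharpened quantitatively by (\ref{star})), the coercivity $\langle A(\xi),\xi\rangle=p\,f(\xi)\geq c^{-1}|\xi|^{p}$ (Euler's identity for the degree-$p$ homogeneous $f$, together with (\ref{frrM'})), and the growth bound $|A(\xi)|\leq c|\xi|^{p-1}$ (Lemma~\ref{fishomdp}). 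These are precisely the hypotheses under which a locally bounded weak solution of $\nabla\cdot A(\nabla \hat u)=0$ is locally H\"older continuous; local boundedness itself is Lemma~\ref{caccioppolitypeinequality}.

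The key elementary observation is that (\ref{flaplace}) is invariant under adding constants: if $\hat u$ is a weak solution then so is $\hat u+a$, since $\nabla f(\nabla(\hat u+a))=\nabla f(\nabla \hat u)$. Hence, fixing $w_{0}$ and $0<\rho$ with $B(w_{0},2\rho)\subset B(w,4r)\cap\Omega$ and writing $M(\rho)=\esssup_{B(w_{0},\rho)}\hat u$, $m(\rho)=\essinf_{B(w_{0},\rho)}\hat u$, the functions $M(2\rho)-\hat u$ and $\hat u-m(2\rho)$ are nonnegative weak solutions in $B(w_{0},2\rho)$. Applying Harnack's inequality (Lemma~\ref{harnackinequality}) to each on $B(w_{0},\rho)$ --- after adding an arbitrarily small $\e>0$ to make them strictly positive and then letting $\e\to 0$ --- yields $M(2\rho)-m(\rho)\leq c\,(M(2\rho)-M(\rho))$ and $M(\rho)-m(2\rho)\leq c\,(m(\rho)-m(2\rho))$. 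Adding these two inequalities and rearranging gives the oscillation decay
\[
\osc_{B(w_{0},\rho)}\hat u\;\leq\;\theta\,\osc_{B(w_{0},2\rho)}\hat u,\qquad \theta=\tfrac{c-1}{c+1}<1.
\]

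I would then iterate this from radius $s_{0}/2$ downward: for $0<s<s_{0}$ choose $j$ with $s_{0}2^{-j-1}\leq s<s_{0}2^{-j}$, so that $\osc_{B(w_{0},s)}\hat u\leq\theta^{\,j}\,\osc_{B(w_{0},s_{0}/2)}\hat u$; since $\theta^{\,j}\leq c\,(2^{-j})^{\alpha}\leq c\,(s/s_{0})^{\alpha}$ with $\alpha=\log(1/\theta)/\log 2\in(0,1]$, and since $\hat u\geq 0$ forces $\osc_{B(w_{0},s_{0}/2)}\hat u\leq\esssup_{B(w_{0},s_{0})}\hat u$, this gives $\osc_{B(w_{0},s)}\hat u\leq c\,(s/s_{0})^{\alpha}\esssup_{B(w_{0},s_{0})}\hat u$. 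Applying the same estimate on a ball of radius $|\tilde w-\hat w|$ centered between $\tilde w$ and $\hat w$ (which is legitimate once $|\tilde w-\hat w|$ is small relative to $s_{0}$, and yields the continuous representative of $\hat u$) upgrades this to the pointwise bound in the statement, with $s_{0}$ in the denominator; since $s<s_{0}$ this is in turn bounded by the asserted quantity with $s$ in the denominator.

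\textbf{Main obstacle.} The reason the $p$-Laplacian argument does not transcribe verbatim is that the coefficient matrix $f_{\eta_{i}\eta_{j}}(\nabla\hat u)$ is only measurable --- $\nabla f$ is merely $K$-quasiconformal, not $C^{1}$ --- and the ellipticity (\ref{star}) both degenerates and blows up where $\nabla\hat u$ vanishes or is large. Thus the oscillation decay cannot come from Schauder estimates and must be extracted from a (weak) Harnack inequality; establishing that Harnack inequality (Lemma~\ref{harnackinequality}) for the merely measurable, degenerate structure is the substantive step, and it rests on the Caccioppoli inequality underlying Lemma~\ref{caccioppolitypeinequality} together with the Sobolev and John--Nirenberg inequalities, i.e. on the machinery of \cite[Ch.~3, 6]{HKM}, which applies once the structure inequalities above are verified.
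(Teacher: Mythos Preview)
Your argument is the standard Harnack-to-H\"older deduction and is essentially correct; the paper itself does not prove this lemma at all but simply defers to \cite{BL} (see the sentence preceding Lemma~\ref{caccioppolitypeinequality}), so there is nothing to compare against. One point to tighten: you assert that $M(2\rho)-\hat u$ is a nonnegative weak solution of (\ref{flaplace}), but (\ref{flaplace}) is not odd unless $f$ is even, which is not assumed. What is true is that $-\hat u$ (hence $M(2\rho)-\hat u$) is a weak solution of the companion equation $\nabla\cdot(\nabla\tilde f(\nabla v))=0$ with $\tilde f(\eta)=f(-\eta)$; since $\tilde f$ is again positive, $p$-homogeneous, and has $\delta$-monotone gradient with the same structure constants as $f$, Lemma~\ref{harnackinequality} applies to $M(2\rho)-\hat u$ with the same constant $c=c(p,f)$. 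With that adjustment your oscillation decay and iteration go through verbatim.
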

Next we indicate H\"older continuity of $\hat{u}$ near $B(w,4r)\cap\partial\Omega$.
\begin{lemma}[Behavior of $\hat{u}$ near the boundary]
\label{boundaryharnackinequalty}

Let $\hat{u}, \Omega, w, r$ be as in Lemma \ref{caccioppolitypeinequality}. Then there is $\alpha'=\alpha'(p,f)>0$ 
such that $\hat{u}$ has a H\"older continuous representative in $B(w,r)$  and if $\tilde{w},\hat{w}\in B(w,r)$ then
\begin{align}
\label{boundaryharnackinequaltyofu}
\begin{split}
  |\hat{u}(\tilde{w})-\hat{u}(\hat{w})|\leq c \left(\frac{|\tilde{w}-\hat{w}|}{r}\right)^{\alpha'}\esssup\limits_{B(w,2r)} \hat{u}.
 \end{split}
\end{align}
\end{lemma}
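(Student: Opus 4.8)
The plan is to reduce (\ref{boundaryharnackinequaltyofu}) to a boundary oscillation decay estimate for $\hat u$, establish that estimate by the De Giorgi--Wiener method for $\mathcal{A}$-harmonic functions at a uniformly $p$-fat boundary point, and then interpolate with the interior H\"older estimate of Lemma \ref{uisholder}. First I would record that $\mathcal{A}(\xi):=\nabla f(\xi)$ (which does not depend on $z$) satisfies the structure conditions of \cite{HKM} with exponent $p$: taking $\eta'=0$ in (\ref{frrM'}) and using $\nabla f(0)=0$ (valid since $p>1$) gives $\langle\mathcal{A}(\xi),\xi\rangle\geq c^{-1}|\xi|^{p}$ and $|\mathcal{A}(\xi)|\leq c|\xi|^{p-1}$, while (\ref{star}) supplies strict monotonicity, with constants depending only on $p$ and $f$. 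Next, extend $\hat u$ by $0$ to $B(w,4r)\setminus\Omega$; the Sobolev-sense vanishing of $\hat u$ on $B(w,4r)\cap\partial\Omega$ makes the extension (still denoted $\hat u$) a nonnegative subsolution of $\nabla\cdot\mathcal{A}(\nabla\,\cdot\,)=0$ in $B(w,4r)$. Finally, because $\Omega$ is simply connected, $\mathbb{C}\setminus\Omega$ is connected (by the standing definition) and unbounded, hence a nondegenerate continuum meeting every point of $\partial\Omega$; so (see \cite{BL}) it is uniformly $p$-fat: there is $c_{0}=c_{0}(p)>0$ with
\[
\mathrm{cap}_{p}\!\left(\overline{B(x,\rho)}\setminus\Omega,\,B(x,2\rho)\right)\ \geq\ c_{0}\,\mathrm{cap}_{p}\!\left(\overline{B(x,\rho)},\,B(x,2\rho)\right)
\]
for all $x\in\partial\Omega$ and $0<\rho<\mathrm{diam}\,\Omega$.

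The heart of the matter is the boundary oscillation decay. For $x\in\partial\Omega\cap B(w,2r)$ and $0<\rho<r$, each truncation $(\hat u-k)_{+}$ with $k\geq0$ is a nonnegative subsolution in $B(x,2\rho)$ that vanishes on the uniformly $p$-fat set $B(x,2\rho)\setminus\Omega$. Feeding this into the Caccioppoli inequality (proved as in Lemma \ref{caccioppolitypeinequality}), the Sobolev inequality, and the capacity density bound above --- a Maz'ya-type inequality --- and running the De Giorgi level-set iteration, one obtains
\[
\esssup_{B(x,\rho)}\hat u\ \leq\ \theta\,\esssup_{B(x,2\rho)}\hat u,\qquad\theta=\theta(p,f)<1\, ;
\]
this is exactly the proof of boundary H\"older continuity of $\mathcal{A}$-harmonic functions at a fat boundary point, for which I would cite \cite{HKM} and \cite{BL}. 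Iterating the displayed inequality on the dyadic scales $\rho=2^{-j}r$ and interpolating between them gives
\[
\esssup_{B(x,\rho)}\hat u\ \leq\ c\left(\frac{\rho}{r}\right)^{\alpha'}\esssup_{B(w,2r)}\hat u\, ,\qquad 0<\rho<r,
\]
for some $\alpha'=\alpha'(p,f)>0$ (with the routine comparison of radii absorbed into $c$).

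To finish I would interpolate this boundary decay with Lemma \ref{uisholder}. Given $\tilde w,\hat w\in B(w,r)$, set $d:=\mathrm{dist}(\tilde w,\partial\Omega)$ and assume $d\leq\mathrm{dist}(\hat w,\partial\Omega)$. If $|\tilde w-\hat w|\geq d/4$, then $\mathrm{dist}(\tilde w,\partial\Omega)$ and $\mathrm{dist}(\hat w,\partial\Omega)$ are each comparable to $|\tilde w-\hat w|$, so choosing nearest boundary points and invoking the boundary decay at those scales bounds $\hat u(\tilde w)$ and $\hat u(\hat w)$ separately by $c(|\tilde w-\hat w|/r)^{\alpha'}\esssup_{B(w,2r)}\hat u$. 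If $|\tilde w-\hat w|<d/4$, then $B(\tilde w,d/2)\subset\Omega$ and Lemma \ref{uisholder} gives $|\hat u(\tilde w)-\hat u(\hat w)|\leq c(|\tilde w-\hat w|/d)^{\alpha}\esssup_{B(\tilde w,d/2)}\hat u$; bounding the last supremum by the boundary decay at a nearest boundary point of $\tilde w$ ($\leq c(d/r)^{\alpha'}\esssup_{B(w,2r)}\hat u$) and using $|\tilde w-\hat w|<d$ with $\alpha'\leq\alpha$ (shrink $\alpha'$ if necessary) gives $(|\tilde w-\hat w|/d)^{\alpha}(d/r)^{\alpha'}\leq(|\tilde w-\hat w|/r)^{\alpha'}$. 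In both cases (\ref{boundaryharnackinequaltyofu}) holds, and the estimate itself furnishes the H\"older continuous representative on $B(w,r)$.

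The main obstacle is the boundary oscillation step: getting $\theta<1$ depending only on $p$ and $f$ requires quantifying the degenerate (for $p\geq2$) or singular (for $1<p<2$) $p$-ellipticity of (\ref{flaplace}) purely through the structure constants of Lemma \ref{fishomdp}, and coupling the Caccioppoli inequality with the capacitary Sobolev inequality in the De Giorgi iteration. This is classical for the $p$-Laplace equation; what makes it transfer here is that only the $p$-growth bounds (\ref{star}), (\ref{frrM'}) --- not any smoothness of $f$ beyond (\ref{homogeneousofdegreep})--(\ref{deltamonotone}) --- enter, so the schemes of \cite{HKM} and \cite{BL} apply with only cosmetic changes, which is why the text can simply refer to \cite{BL} for the proof.
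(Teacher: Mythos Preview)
Your argument is correct and, for $1<p\leq 2$, matches the paper's almost exactly: both deduce from simple connectivity that $\overline{B(x,\rho)}\setminus\Omega$ contains a continuum of diameter $\approx\rho$, hence is uniformly $p$-fat, and then feed this into a boundary oscillation decay (the paper cites the Wiener integral in \cite[Theorem~6.18]{HKM}, you outline the equivalent De Giorgi--Maz'ya iteration) before interpolating with the interior estimate of Lemma~\ref{uisholder}. The one genuine difference is the case $p>2$: the paper bypasses capacity and Wiener entirely and simply invokes Morrey's theorem. Indeed, Lemma~\ref{caccioppolitypeinequality} gives $\int_{B(w,r/2)}|\nabla\hat u|^{p}\,\rd\nu\leq c\,r^{2-p}\bigl(\esssup_{B(w,r)}\hat u\bigr)^{p}$, and since $p>n=2$ the Sobolev--Morrey embedding immediately yields H\"older continuity with exponent $1-2/p$, giving (\ref{boundaryharnackinequaltyofu}) in one line. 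Your uniform treatment works for all $p$ but forfeits this shortcut when $p>2$. (A minor quibble: your phrase ``taking $\eta'=0$ in (\ref{frrM'})'' conflates (\ref{frrM'}) with (\ref{star}); the structure bounds $\langle\mathcal A(\xi),\xi\rangle\geq c^{-1}|\xi|^{p}$ and $|\mathcal A(\xi)|\leq c|\xi|^{p-1}$ follow directly from (\ref{frrM'}) together with Euler's formula $\langle\nabla f(\xi),\xi\rangle=pf(\xi)$, no $\eta'$ needed.)
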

\begin{proof}
The proof for $p>2$ follows from Lemma \ref{caccioppolitypeinequality} and Morrey's Theorem. 
For $1<p\leq 2$ we note that there is a continuum $\subset \overline{B(w,t)}\setminus \Omega$ connecting $w$ to $\partial B(w,t)$ as follows from 
simply connectivity of $\Omega$. We also note that this continuum is uniformly fat in the sense of $p-$capacity (see \cite{L88} 
for the definition of a uniformly fat set). That is, the $p-$capacity of this continuum is $\geq c^{-1}$ times the $p-$capacity of $B(w,r)$.
Using this fact in the Wiener integral in \cite[Theorem 6.18]{HKM} we obtain for $0<\rho\leq r/2$
\begin{align}
\label{maxinballismaxin2b}
%\label{theorem618}
 \begin{split}
\osc\limits_{B(w,\rho)\cap\Omega} \hat{u} \leq c\left(\frac{\rho}{r}\right)^{\alpha'} \esssup\limits_{B(w,r)}\hat{u}
\end{split}
 \end{align}
for some $c=c(p,f,\Omega)>0$. From (\ref{maxinballismaxin2b}) 
we obtain Lemma \ref{boundaryharnackinequalty} for $1<p\leq 2$ when $\tilde{w}$ or $\hat{w}$ in $B(w,4r)\cap \partial\Omega$. 
Other values of $\tilde{w}, \hat{w}$ in (\ref{boundaryharnackinequaltyofu}) 
are handled by using this estimate and the interior estimate in Lemma \ref{uisholder}
\end{proof}
\begin{lemma}
\label{uismu}
For fixed $p$, $1<p<\infty$, let $\hat{u},\Omega, w, r$ be as in Lemma \ref{caccioppolitypeinequality}.
Let $\hat{\mu}$ be the measure corresponding to $\hat{u}$ as in (\ref{ast}).

Then
\begin{align}
\label{measureandu}
\tfrac{1}{c}\, r^{p-2} \hat{\mu}(B(w, \tfrac{r}{2})) \leq \esssup\limits_{B(w,r)} \hat{u}^{p-1} \leq c\, r^{p-2} \hat{\mu}(B(w, 2r)).
\end{align}
\end{lemma}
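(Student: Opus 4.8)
The plan is to prove the two inequalities in (\ref{measureandu}) separately, by quite different arguments: the left-hand one is a short consequence of the defining relation (\ref{ast}) and Caccioppoli's inequality, whereas the right-hand one is a comparison (barrier) estimate and is where the work lies. For the left-hand bound I would fix $\phi\in C_0^\infty(B(w,r))$ with $0\le\phi\le1$, $\phi\equiv1$ on $B(w,\tfrac r2)$ and $|\nabla\phi|\le c/r$, and insert $\phi$ into (\ref{ast}); since $\hat\mu\ge0$ and $\phi\ge\chi_{B(w,r/2)}$ on $\partial\Omega$,
\[
\hat\mu\big(B(w,\tfrac r2)\big)\le\int_{\partial\Omega}\phi\,\rd\hat\mu=-\int_{\ce}\langle\nabla f(\nabla\hat u),\nabla\phi\rangle\,\rd\nu\le\frac{c}{r}\int_{B(w,r)}|\nabla f(\nabla\hat u)|\,\rd\nu .
\]
By Lemma \ref{fishomdp}, $|\nabla f(\nabla\hat u)|\le c|\nabla\hat u|^{p-1}$; H\"older's inequality bounds $\int_{B(w,r)}|\nabla\hat u|^{p-1}\,\rd\nu$ by $c\,r^{2/p}\big(\int_{B(w,r)}|\nabla\hat u|^{p}\,\rd\nu\big)^{(p-1)/p}$, and a second use of Lemma \ref{fishomdp} together with Caccioppoli's inequality (Lemma \ref{caccioppolitypeinequality}, applied at radius $2r$) bounds $\int_{B(w,r)}|\nabla\hat u|^{p}\,\rd\nu$ by $c\,r^{2-p}\esssup_{B(w,2r)}\hat u^{p}$. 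The exponents of $r$ sum to $2-p$, so $r^{p-2}\hat\mu(B(w,\tfrac r2))\le c\,\esssup_{B(w,2r)}\hat u^{p-1}$, and a routine covering of $B(w,\tfrac r2)\cap\partial\Omega$ by disks centered on $\partial\Omega$ and contained in $B(w,r)$ upgrades $\esssup_{B(w,2r)}$ to $\esssup_{B(w,r)}$.

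For the right-hand bound I would adapt the comparison argument of \cite{BL}. Put $M=\esssup_{B(w,r)}\hat u$ and extend $\hat u$ by $0$ off $\Omega$, so that $-\nabla\cdot(\nabla f(\nabla\hat u))=\hat\mu$ weakly in $B(w,4r)$. Because $\Omega$ is simply connected, $\overline{B(w,r)}\setminus\Omega$ contains a continuum $K$ joining $w$ to $\partial B(w,r)$, and $K$ is uniformly fat in the sense of $p$-capacity — the very fact used in the proof of Lemma \ref{boundaryharnackinequalty} (for $p>2$ one may instead use that a single point has positive $p$-capacity). I would compare $\hat u$, via the comparison principle for solutions of (\ref{flaplace}), with a suitable multiple of the $\mathcal A$-capacitary potential of $K$ relative to $B(w,2r)$, and, invoking the interior Harnack inequality (Lemma \ref{harnackinequality}) and the boundary oscillation bound (\ref{maxinballismaxin2b}), deduce the lower energy estimate $\int_{B(w,2r)\cap\Omega}f(\nabla\hat u)\,\rd\nu\ge c^{-1}r^{2-p}M^{p}$ and, more importantly, a lower bound for the flux of $\nabla f(\nabla\hat u)$ across $\partial\Omega\cap B(w,2r)$. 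Testing (\ref{ast}) against a cut-off equal to $1$ on $B(w,\tfrac{3}{2}r)$ and supported in $B(w,2r)$ identifies that flux with $\hat\mu(B(w,2r))$ modulo the gradient terms of the cut-off, and chasing the inequalities yields $r^{p-2}\hat\mu(B(w,2r))\ge c^{-1}M^{p-1}$.

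The main obstacle is this second inequality. Unlike the first, it is not a formal manipulation of (\ref{ast}); it genuinely needs the comparison principle and $p$-capacity estimates for the Euler--Lagrange operator of $f$, and the argument has to be organized so as to survive the case in which $\Omega\cap B(w,r)$ is thin — there both sides of (\ref{measureandu}) are small and the estimate has room to spare, but the capacitary barrier is still required to produce a clean lower bound for $\hat\mu$. The only point at which the sign of $p-2$ enters is the construction of the fat obstacle $K\subset\overline{B(w,r)}\setminus\Omega$; everything else is uniform in $p\in(1,\infty)$ and rests only on Lemma \ref{fishomdp} and the regularity of $\nabla f$ recalled in \S\ref{basicregresult}.
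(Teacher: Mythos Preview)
The paper itself gives no argument: its entire proof is the sentence ``A similar argument to the one in \cite{EL} can be applied.'' Your sketch is therefore already considerably more detailed than what appears in the paper, and the overall strategy you describe --- test-function estimate for the upper bound on $\hat\mu$, barrier/comparison for the lower bound --- is exactly the standard route taken in \cite{EL} and \cite{BL}.

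Your proof of the \emph{left-hand} inequality is correct as written: the chain (\ref{ast}) $\Rightarrow$ Lemma~\ref{fishomdp} $\Rightarrow$ H\"older $\Rightarrow$ Lemma~\ref{caccioppolitypeinequality} gives $r^{p-2}\hat\mu(B(w,\tfrac r2))\le c\,\esssup_{B(w,2r)}\hat u^{p-1}$, and the covering step to replace $2r$ by $r$ is legitimate.

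For the \emph{right-hand} inequality your outline is in the right direction but the decisive step is hand-waved. The comparison you set up (with the $\mathcal A$-capacitary potential of the continuum $K\subset\overline{B(w,r)}\setminus\Omega$) and the Poincar\'e-type argument do yield the lower energy bound $\int_{B(w,2r)}f(\nabla\hat u)\,\rd\nu\ge c^{-1}r^{2-p}M^{p}$, but a lower bound on the \emph{energy} does not by itself give a lower bound on the \emph{measure} $\hat\mu(B(w,2r))$: testing (\ref{ast}) with a cut-off equal to $1$ on $B(w,\tfrac32 r)$ produces an annular integral of $\langle\nabla f(\nabla\hat u),\nabla\phi\rangle$ with no a priori sign, so the ``flux identification'' you invoke needs an additional ingredient. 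In \cite{EL}/\cite{BL} this is supplied either by the weak Harnack inequality for nonnegative $\mathcal A$-supersolutions (applied to $\hat u$ extended by $0$, which is $\mathcal A$-superharmonic in $B(w,4r)$ with Riesz measure $\hat\mu$), or equivalently by first using Lemma~\ref{boundaryharnackinequalty} to locate $z_{1}\in B(w,r)$ with $\hat u(z_{1})\approx M$ and $d(z_{1},\partial\Omega)\approx r$, and then running the test-function argument with $\phi$ multiplied by a truncation of $\hat u$. Once you insert one of these devices, the argument closes exactly as you intend.
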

\begin{proof}
 A similar argument to the one in \cite{EL} can be applied to obtain Lemma \ref{uismu}.
\end{proof}

We next study the so called capacitary function. To this end, we choose $z_{0}\in\Omega$ and let $D=\Omega\setminus \overline{B(z_{0}, d(z_{0},\partial\Omega)/4)}$. Let $u$ 
be a capacitary function for $D$ relative to $f$. That is,
$u$ is a positive weak solution to (\ref{flaplace}) in $D$ with continuous boundary values, 
$u\equiv 0$ on $\partial\Omega$ and $u\equiv 1$ on $\partial B(z_{0}, d(z_{0},\partial\Omega)/4)$.
\begin{remark}
\label{compofmeasures}
It easily follows from Lemma \ref{uismu} that $\hat{\mu}, \mu$  corresponding to $\hat{u}, u$ 
respectively as in (\ref{ast}) are mutually absolutely continuous,
\[
 \hat{\mu} \ll \mu \ll \hat{\mu}.
\]
Hence  $\hd{\hat{\mu}}=\hd{\mu}$. We also conclude from mutual absolute continuity of $\hat{\mu}, \mu$ that Theorem \ref{maintheorem} holds for $\hat{\mu}$ if and only if it holds for $\mu$ (For a proof see \cite{LNP}).
\end{remark}
\subsection{More advanced regularity results}
\label{madvreg}
In this subsection we study more advanced regularity properties of a weak solution $\hat{u}$ to (\ref{flaplace}). 
We first obtain regularity results for $\nabla\hat{u}$. 
To this end, assume that $B(\hat{w},4r)\subset\Omega$ and let $\hat{u}_{\varepsilon}$ be a weak solution to
\begin{align}
\label{epsilonweaksoln}
 \begin{split}
  0&=\nabla\cdot(\nabla f_{\varepsilon}(\nabla \hat{u}_{\varepsilon}))=\sum\limits_{k=1}^{2}\frac{\urpartial}{\urpartial x_{k}}\left(\frac{\urpartial f_{\varepsilon}(\nabla \hat{u}_{\varepsilon})}{\urpartial\eta_k}\right)
 \end{split}
\end{align}
in  $B(\hat{w},2r)$ with $\hat{u}_{\varepsilon}-\hat{u}\in W^{1,p}_{0}(B(\hat{w},2r))$ where $f_{\varepsilon}=f\ast \theta_{\varepsilon}$ 
is as in (\ref{convprop}). 

Using the De Giorgi method, (\ref{epsilonstar}), and pde theory, it can be shown that  $\zeta=(\hat{u}_{\varepsilon})_{\xi}$ is in $W^{1,2}(B(\hat{w},r))$ and
satisfies a uniformly elliptic equation in divergence form (for more details see \cite{LU}). That is,  $\zeta=(\hat{u}_{\varepsilon})_{\xi}$ in $W^{1,2}(B(\hat{w},r))$
is a weak solution to
\begin{align}
 0=\sum\limits_{k,j=1}^{2}\frac{\urpartial}{\urpartial x_{k}}\left(\frac{\urpartial^{2}f^{\varepsilon}(\nabla \hat{u}_{\varepsilon})}{\urpartial \eta_{k}\urpartial\eta_{j}}\, \frac{\urpartial \zeta}{\urpartial x_{j}}\right).
\end{align}
in $B(\hat{w},2r)$. Here ellipticity constants and $W^{2,2}$ norm of $\hat{u}_{\varepsilon}$ depend on $\varepsilon$.
On the other hand, $\hat{u}_{\varepsilon}$ also satisfies a nondivergence form equation
\begin{align}
\label{nondivergencesolnu}
 \begin{split}
  0=\frac{1}{\left(|\nabla \hat{u}_{\varepsilon}|+\varepsilon\right)^{p-2}}\sum\limits_{j,k=1}^{2}\left(\frac{\urpartial^{2}f^{\varepsilon}(\nabla \hat{u}_{\varepsilon})}{\urpartial \eta_{k}\urpartial\eta_{j}}\right)\zeta_{x_{j}x_{k}}.
 \end{split}
\end{align}
in $B(\hat{w},2r)$. It follows from (\ref{epsilonstar}) that ellipticity constants are independent of $\varepsilon$. Using this fact and arguing
 as in \cite[Chapter 5]{GT} it follows that $(\hat{u}_{\varepsilon})_{z}=\hat{u}_{x_{1}}-\ri \hat{u}_{x_{2}}$ is a $K-$quasiregular mapping for some constant $K$ which depends on the constant $c$ in (\ref{epsilonstar}).

Then $\hat{u}_{\varepsilon}\in W^{2,2}(B(\hat{w},2r))$ with norm independent of $\varepsilon$. 
Also $\nabla \hat{u}_{\varepsilon}$ is $\alpha''-$H\"older continuous where $\alpha''=K-\sqrt{K^{2}-1}$ 
with constant independent of $\varepsilon$ (see \cite{AIM}).

Since $\nabla \hat{u}_{\varepsilon}\to \nabla \hat{u}$ in $W^{1,p}(B(\hat{w},2r))$, then for some subsequence, $\varepsilon^{i}\to 0$ we have
$\nabla \hat{u}_{\varepsilon_{i}}\to \nabla \hat{u}$ a.e in $B(\hat{w},2r)$. $\{\nabla \hat{u}_{\varepsilon_{i}}\}$ is 
equicontinuous as $\{\nabla \hat{u}_{\varepsilon_{i}}\}$ is uniformly H\"older continuous with constant 
independent of $\varepsilon$. We may redefine $\nabla \hat{u}$ in a set of measure zero if needed. Thus,
$\nabla \hat{u}_{\varepsilon_{i_{k}}}\to \nabla \hat{u}$ uniformly  on compact subsets of $B(\hat{w},2r)$.
Then it follows from \cite{AIM} that $\nabla \hat{u}$ is a $K-$quasiregular mapping. 

From quasiregularity we also have
\begin{align}
\|\nabla \hat{u}\|_{W^{1,2}(B(\hat{w},r)\cap\Omega)} \leq c \|\nabla \hat{u}\|_{L^{2}(B(\hat{w},\tfrac{3r}{2})\cap\Omega)}
\end{align}
where $c=c(p)$, and $\nabla \hat{u}$ is H\"older continuous. Using these facts and basic 
Cacciopoli type estimates for $\hat{u}_{\xi}$ we deduce the following lemma,
\begin{lemma}[Local interior regularity for $\nabla \hat{u}$]
\label{localholderfornablau}
Let $\hat{u}, f, \Omega, w$ be as in Lemma \ref{caccioppolitypeinequality}. 
If $B(\tilde{w}, 4s)\subset B(w,4r)\cap\Omega$, then $\hat{u}$ has a representative with H\"older continuous 
derivatives in $B(\tilde{w},2s)$ (also denoted $\hat{u}$). Moreover $\nabla\hat{u}$ is $K-$quasiregular and there exists $\alpha'''$, $0<\alpha'''<1$, and $c\geq 1$, depending only on $f$ and $p$, with
\begin{align}
 \label{nablauisholdercont}
\begin{split}
|\nabla \hat{u}(\tilde{z})-\nabla \hat{u}(\hat{z})| \leq c \left(\frac{|\tilde{z}-\hat{z}|}{s}\right)^{\alpha'''}\, \esssup\limits_{B(\tilde{w},s)}|\nabla \hat{u}| \leq 
\frac{c}{s}\left(\frac{|\tilde{z}-\hat{z}|}{s}\right)^{\alpha'''} \esssup\limits_{B(\tilde{w},s)}\hat{u}. 
\end{split}
\end{align}
Also if $\nabla \hat{u}\neq 0$ in $B(\tilde{w},2s)$, then
\begin{align}
\int\limits_{B(\tilde{w},s)}|\nabla \hat{u}|^{p-2}\sum\limits_{k,j=1}^{2}(\hat{u}_{x_{k}x_{j}})^{2}\rd\nu \leq \frac{c}{(t-s)^{2}}\int\limits_{B(\tilde{w},t)}|\nabla \hat{u}|^{p}\rd\nu. 
\end{align}
for $s<t<2s$.
\end{lemma}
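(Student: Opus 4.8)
The plan is to assemble the regularity facts recorded in the discussion preceding the statement and then to scale them into the asserted form. By translating and dilating --- replacing $\hat u(z)$ by $\hat u(\tilde w+sz)$, which again solves (\ref{flaplace}) with the same $f$ since the class of weak solutions and the hypotheses (\ref{homogeneousofdegreep}), (\ref{deltamonotone}) on $f$ are invariant under similarities of the plane --- it suffices to prove the three estimates for a solution on a fixed ball, with $c$ and $\alpha'''$ depending only on $p,f$; the precise radii in the statement are then recovered by routine book-keeping. Working with the mollified equations (\ref{epsilonweaksoln}), the analysis above provides, uniformly in $\varepsilon$: that $(\hat u_\varepsilon)_z$ is $K$-quasiregular with $K=K(\delta)$; that $\hat u_\varepsilon\in W^{2,2}$ on compact subsets with $\varepsilon$-independent bounds; and, by the H\"older theory for quasiregular mappings (see \cite{AIM}), that $\nabla\hat u_\varepsilon$ is $\alpha'''$-H\"older continuous with $\alpha'''=K-\sqrt{K^{2}-1}$ and with seminorm controlled, independently of $\varepsilon$, by the local $\esssup$ of $|\nabla\hat u_\varepsilon|$. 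Since $\nabla\hat u_\varepsilon\to\nabla\hat u$ in $L^{p}$ on compact subsets, a subsequence converges a.e., and equicontinuity upgrades this to locally uniform convergence; hence, after modifying $\hat u$ on a null set, $\nabla\hat u$ is continuous, $K$-quasiregular, and inherits the H\"older estimate. Undoing the scaling gives the first inequality of (\ref{nablauisholdercont}).

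For the second inequality I would combine the Caccioppoli bound (\ref{21}) with an interior sup bound for the gradient. Because $(\hat u_\varepsilon)_z$ is $K$-quasiregular (equivalently, $\hat u_\varepsilon$ solves the uniformly elliptic nondivergence equation (\ref{nondivergencesolnu}) with $\varepsilon$-independent ellipticity), the higher integrability and local boundedness of quasiregular mappings bound $\esssup|\nabla\hat u_\varepsilon|^{p}$ on a ball by a constant (depending only on $p,f$) times the mean value of $|\nabla\hat u_\varepsilon|^{p}$ over a concentric ball of twice the radius; passing to the limit the same holds for $\hat u$. Feeding in $f(\nabla\hat u)\approx|\nabla\hat u|^{p}$ (Lemma \ref{fishomdp}) together with (\ref{21}) yields $\esssup_{B(\tilde w,s)}|\nabla\hat u|\le \tfrac{c}{s}\,\esssup_{B(\tilde w,2s)}\hat u$, and Harnack's inequality (Lemma \ref{harnackinequality}), applicable since $\hat u>0$ solves (\ref{flaplace}) and $B(\tilde w,4s)\subset B(w,4r)\cap\Omega$, lets one replace the $\esssup$ of $\hat u$ over $B(\tilde w,2s)$ by the one over $B(\tilde w,s)$ (Lemma \ref{uisholder} would serve equally well), completing the chain in (\ref{nablauisholdercont}).

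For the last display, assume $\nabla\hat u\neq 0$ on $B(\tilde w,2s)$. Then $|\nabla\hat u|$ is continuous and bounded above and below by positive constants there, so differentiating (\ref{flaplace}) shows that $\zeta=\hat u_\xi$ solves the \emph{uniformly} elliptic divergence-form equation $\nabla\cdot(D^{2}f(\nabla\hat u)\,\nabla\zeta)=0$ on $B(\tilde w,2s)$, with $D^{2}f(\nabla\hat u)\approx|\nabla\hat u|^{p-2}$ by Lemma \ref{fishomdp}; in particular $\hat u\in W^{2,2}(B(\tilde w,2s))$. The estimate is then the classical Caccioppoli inequality for this equation: testing with $\phi^{2}\zeta$ for a cutoff $\phi\in C_{0}^{\infty}(B(\tilde w,t))$, $\phi\equiv 1$ on $B(\tilde w,s)$, $|\nabla\phi|\le c/(t-s)$, integrating by parts and absorbing the first-order term gives $\int|\nabla\hat u|^{p-2}|\nabla\zeta|^{2}\phi^{2}\,\rd\nu\le \tfrac{c}{(t-s)^{2}}\int_{B(\tilde w,t)}|\nabla\hat u|^{p-2}|\zeta|^{2}\,\rd\nu$; since $|\zeta|\le|\nabla\hat u|$, summing over $\xi=x_{1},x_{2}$ produces exactly $\int_{B(\tilde w,s)}|\nabla\hat u|^{p-2}\sum_{k,j}(\hat u_{x_{k}x_{j}})^{2}\,\rd\nu\le \tfrac{c}{(t-s)^{2}}\int_{B(\tilde w,t)}|\nabla\hat u|^{p}\,\rd\nu$. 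To keep the integration by parts rigorous one may run it first for $\hat u_\varepsilon$ using (\ref{epsilonstar}) and then pass to the limit.

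I expect the genuine difficulty to lie entirely in the uniform-in-$\varepsilon$ bounds --- that the quasiconformality constant $K$, the exponent $\alpha'''$, and the $W^{2,2}$ norm of $\hat u_\varepsilon$ do not deteriorate as $\varepsilon\to 0$, even though the limiting coefficient $|\nabla\hat u|^{p-2}$ is only degenerate/singular elliptic --- which is exactly the reason for the regularization and for the $\varepsilon$-uniform estimate (\ref{epsilonstar}); this step has already been carried out in the discussion above, so the proof of the lemma itself reduces to the scaling book-keeping and the two Caccioppoli-type estimates just sketched.
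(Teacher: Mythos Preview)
Your proposal is correct and follows essentially the same route as the paper: the paper derives the $K$-quasiregularity of $(\hat u_\varepsilon)_z$, the $\varepsilon$-uniform $W^{2,2}$ and H\"older bounds, and the passage to the limit in the discussion immediately preceding the lemma, and then simply says ``Using these facts and basic Caccioppoli type estimates for $\hat u_\xi$ we deduce the following lemma''; you have spelled out exactly those Caccioppoli estimates and the scaling/Harnack book-keeping needed for the second inequality in (\ref{nablauisholdercont}). There is no substantive difference in approach.
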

\begin{lemma}
\label{logfgraduissoln}
Let $\hat{u}, f, \Omega, w, r$ be as in Lemma \ref{caccioppolitypeinequality}. 
If $\nabla \hat{u}\neq 0$ in 
$B(\tilde{w},4s)\subset B(w,4r)\cap\Omega$ then $h(z)=\log|\nabla \hat{u}|(z)$ is a weak solution to a uniformly elliptic divergence 
form partial differential equation for which a Harnack's inequality holds.
\end{lemma}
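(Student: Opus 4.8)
The plan is to leverage the quasiregularity of $\nabla\hat u$ provided by Lemma~\ref{localholderfornablau}. Work on $B(\tilde w,2s)$, where by that lemma $\nabla\hat u$ has a H\"older continuous, $K$-quasiregular representative with $K=K(p,f)$ and $\nabla\hat u\in W^{1,2}_{\mathrm{loc}}$, and where by hypothesis $\nabla\hat u\neq 0$. Viewing $F:=\hat u_{x_1}+\ri\,\hat u_{x_2}$ as a complex-valued function we thus have $|F|=|\nabla\hat u|$, $F$ is $K$-quasiregular, and $F$ is nowhere zero; since $B(\tilde w,2s)$ is simply connected, $F$ has a single-valued logarithm $G=\log F=h+\ri\,\tilde h$ there, with $h=\log|\nabla\hat u|$. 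Because $\log$ is conformal off the origin, $G$ is again $K$-quasiregular, so $G$ satisfies a Beltrami equation $G_{\bar z}=\nu\,G_z$ a.e.\ with $\|\nu\|_{\infty}\le k:=(K-1)/(K+1)<1$. Note that $h\in W^{1,2}_{\mathrm{loc}}(B(\tilde w,2s))$, since $\nabla h$ is, up to a bounded factor coming from $|\nabla\hat u|$ being bounded away from $0$ on compact subsets, a bounded linear image of the second derivatives of $\hat u$, which lie in $L^2_{\mathrm{loc}}$ by Lemma~\ref{localholderfornablau}.

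Next I would convert the Beltrami equation into a divergence-form equation for $h$. Splitting $G_{\bar z}=\nu\,G_z$ into real and imaginary parts yields an a.e.\ identity $\nabla\tilde h=\mathcal J(z)\,\nabla h$, where the $2\times2$ matrix $\mathcal J$ has bounded measurable entries that are explicit functions of $\nu$ and satisfies $\det\mathcal J=1$; that is, $\tilde h$ is a stream function for $h$. Since $\tilde h\in W^{1,2}_{\mathrm{loc}}$ and mixed distributional derivatives commute, one gets $\nabla\cdot(R\,\mathcal J\,\nabla h)=0$ weakly, where $R$ is the rotation by $\pi/2$; setting $\mathcal A:=R\,\mathcal J$, a short computation gives that $\mathcal A$ is symmetric, $\det\mathcal A=1$, and has eigenvalue ratio bounded in terms of $k$, hence of $K$. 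Thus $h$ is a weak solution of
\[
\nabla\cdot\big(\mathcal A(z)\,\nabla h\big)=0\quad\mbox{in }B(\tilde w,2s),
\]
with $\mathcal A$ bounded, measurable and uniformly elliptic with constants depending only on $p$ and $f$. By the De Giorgi--Nash--Moser theory such an equation satisfies an interior, scale-invariant Harnack inequality for non-negative solutions, which is the assertion of the lemma (applied, as in \cite{BL}, after adding a constant so that the relevant solution is non-negative on the subdisc of interest).

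A purely PDE-based alternative, avoiding the logarithm of a quasiregular map, is to differentiate (\ref{flaplace}) to see that each $\hat u_{x_k}$ is a weak solution of the linearized operator $\zeta\mapsto\sum_{i,j}\partial_{x_i}(f_{\eta_i\eta_j}(\nabla\hat u)\,\zeta_{x_j})$, whose coefficients equal $|\nabla\hat u|^{p-2}$ times a uniformly elliptic matrix by Lemma~\ref{fishomdp}; dividing out the weight $|\nabla\hat u|^{p-2}$, legitimate since $|\nabla\hat u|$ is continuous and bounded away from $0$ on the disc, and combining the two equations for $\hat u_{x_1},\hat u_{x_2}$ with the identity $\hat u_{x_1 x_2}=\hat u_{x_2 x_1}$ (i.e.\ that $\nabla\hat u$ is curl-free) again produces a uniformly elliptic divergence-form equation for $\log|\nabla\hat u|$. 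The main obstacle in either approach is the careful bookkeeping showing that all ellipticity constants, and hence Harnack's constant, depend only on $K$ (equivalently on $p$ and $\delta$) and not on $\hat u$ or on the disc, together with verifying that every manipulation---in particular the passage from the Beltrami/first-order system to the second-order divergence form---is valid in the weak $W^{1,2}$ sense rather than merely formally.
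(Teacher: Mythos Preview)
Your approach is essentially the paper's: both use the $K$-quasiregularity of the gradient from Lemma~\ref{localholderfornablau} to conclude that $\log|\nabla\hat u|$ is a weak solution of a uniformly elliptic divergence-form equation, the paper by citing \cite[Chapter 14, Theorem 14.61]{HKM} directly (and writing the pullback matrix $\mathcal A=\det D^2\hat u\,(D^2\hat u^{\mathrm T}D^2\hat u)^{-1}$ explicitly), and you by unpacking the Beltrami/stream-function derivation that underlies that reference. One small correction: it is $\hat u_z=\hat u_{x_1}-\ri\,\hat u_{x_2}$, not $F=\hat u_{x_1}+\ri\,\hat u_{x_2}$, that the paper shows is quasiregular (your $F=\bar{\hat u_z}$ is then anti-quasiregular), though this is immaterial for the conclusion since $|F|=|\bar F|$ and the same uniformly elliptic equation for $\log|F|$ results.
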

\begin{proof}
From Lemma \ref{localholderfornablau} $\nabla \hat{u}$ is a $K-$quasiregular mapping and 
by assumption $\nabla \hat{u}\neq 0$ in $B(\tilde{w},4s)$. Thus $h(z)$ is well-defined in 
$B(\tilde{w},4s)$ and also a weak solution to
 \begin{align}
 \label{logissolution}
  \sum\limits_{k,j=1}^{2}\frac{\partial}{\partial x_{k}}\left(\mathcal{A}_{kj}h_{x_{j}}\right)=0 \, \, \mathrm{in}\, \, B(\tilde{w},4s)
 \end{align}
where $(\mathcal{A}_{kj})=\mathcal{A}$, $D^{2}\hat{u}=\left(\frac{\urpartial^{2}\hat{u}}{\urpartial x_{k} \urpartial x_{j}}\right)$, and
\[
 \mathcal{A}=\left\{
\begin{array}{cc}
\det D^{2}\hat{u}\,\left(D^{2}\hat{u}^{\mbox{\tiny{T}}}\, D^{2}\hat{u}\right)^{-1} & \mathrm{if}\, \, D^{2}\hat{u}\ \mathrm{is\ invertible,}\\
\mbox{Identity matrix} & \mathrm{otherwise}
 \end{array}
\right.
\]
(for more details see \cite[Chapter 14]{HKM}). It follows from an observation in \cite[Theorem 14.61]{HKM} and $K-$quasiregularity of $\nabla \hat{u}$ that
\[
\tfrac{1}{c}|\eta|^{2}\leq \mathcal{A}\eta\cdot\eta\leq c |\eta|^{2}\, \ \mathrm{a.e}\, \, \mbox{in}\, \, B(\tilde{w},4s)\, \, \mbox{and for all}\, \, \eta\in\mathbb{R}^{2}.
\]
Therefore $h=\log|\nabla \hat{u}|$ is a weak solution to a uniformly elliptic partial differential equation in divergence form in $B(\tilde{w},4s)$
 from which we conclude that Harnack's inequality can be applied to $h$ in $B(\tilde{w},4s)$ when $h>0$.
\end{proof}
\begin{lemma}
\label{nablaisnotzeroinomega}
%%%%%%%%%%%%%%%%%%%%%%%%%%%
Let $u$ be the capacitary function for $D$ defined after Lemma \ref{uismu}. Then $\nabla u\neq 0$ in $D$.
\end{lemma}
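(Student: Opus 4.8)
The plan is to combine the quasiregularity of $\nabla u$ with the ring structure of $D$ and a winding number (argument principle) count on the level sets of $u$. Throughout write $\rho=d(z_{0},\partial\Omega)/4$, so $D=\Omega\setminus\overline{B(z_{0},\rho)}$, with $u\equiv 0$ on $\partial\Omega$ and $u\equiv 1$ on $\partial B(z_{0},\rho)$.

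First I would record the elementary facts. Because the equation (\ref{flaplace}) involves $u$ only through $\nabla u$, the function $u+c$ solves (\ref{flaplace}) for every constant $c$; hence the maximum principle from subsection \ref{advreg}, together with $u>0$ and the boundary data, yields $0<u<1$ throughout $D$. Applying Lemma \ref{localholderfornablau} in small balls compactly contained in $D$ gives $u\in C^{1}(D)$ and $\nabla u$ $K$-quasiregular in $D$; since $u$ is non-constant, $\nabla u$ is a non-constant $K$-quasiregular map, hence discrete and open, so its zero set $Z=\{z\in D:\nabla u(z)=0\}$ is a countable set of isolated points each carrying a finite positive integer local index, and the argument principle holds for $\nabla u$. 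In particular $u(Z)$ is countable, so every $t\in(0,1)\setminus u(Z)$ is a regular value.

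Next I would analyze, for a regular value $t\in(0,1)$, the bounded open set $\widetilde{O}_{t}:=\{z\in D:u(z)>t\}\cup\overline{B(z_{0},\rho)}$. It contains $\overline{B(z_{0},\rho)}$ and one checks that its topological boundary equals $\{z\in D:u(z)=t\}$; since $t$ is regular this level set is, by the implicit function theorem, a non-empty compact $C^{1}$ one-manifold without boundary, i.e.\ a finite disjoint union of $C^{1}$ Jordan curves. The maximum principle forces $\widetilde{O}_{t}$ to be connected, since a component $V$ disjoint from $\overline{B(z_{0},\rho)}$ would satisfy $u\equiv t$ on $\partial V$ and hence, by the maximum principle, on $V$, contradicting $V\subset\{u>t\}$. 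Using that $u$ is also a supersolution, the minimum principle shows that every component of $\{z\in D:u(z)<t\}$ meets $\partial\Omega$; since $\mathbb{C}\setminus\Omega$ is connected and $\mathbb{C}\setminus\widetilde{O}_{t}=(\mathbb{C}\setminus\Omega)\cup\overline{\{z\in D:u(z)<t\}}$, the complement of $\widetilde{O}_{t}$ is connected as well. A bounded connected planar open set with connected complement whose topological boundary is a finite union of Jordan curves is a Jordan domain; hence $\partial\widetilde{O}_{t}$ is a single $C^{1}$ Jordan curve $\gamma_{t}$, and $\widetilde{O}_{t}$ is its bounded complementary component, with $\overline{B(z_{0},\rho)}\subset\widetilde{O}_{t}$.

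Finally comes the count. For regular values $0<t<s<1$ we have $\overline{B(z_{0},\rho)}\subset\widetilde{O}_{s}\subset\widetilde{O}_{t}$, so $R:=\widetilde{O}_{t}\setminus\overline{\widetilde{O}_{s}}=\{z\in D:t<u(z)<s\}$ is a topological annulus with $\partial R=\gamma_{t}\cup\gamma_{s}$ and compact closure inside $D$, on which $\nabla u$ is continuous by Lemma \ref{localholderfornablau} and non-vanishing on $\partial R$. On $\gamma_{t}$ the vector $\nabla u$ is a positive multiple of the unit normal of $\widetilde{O}_{t}$ pointing into $\widetilde{O}_{t}$ (where $u>t$), so by the Umlaufsatz the winding number of $\nabla u$ along $\gamma_{t}$, positively oriented, equals $+1$, and likewise along $\gamma_{s}$. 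Hence the total change of $\arg\nabla u$ along the positively oriented boundary of $R$ (that is, $\gamma_{t}$ counterclockwise together with $\gamma_{s}$ clockwise) is $2\pi-2\pi=0$, so the argument principle for the $K$-quasiregular map $\nabla u$ gives that $\nabla u$ has no zero in $R$. Since the regular values are co-countable in $(0,1)$, every $z\in D$ satisfies $0<u(z)<1$ and lies in such an $R$ for suitable regular $t<u(z)<s$; therefore $\nabla u(z)\neq 0$, proving the lemma.

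The step I expect to be the main obstacle is the middle one: showing that for a regular value $t$ the level set $\{u=t\}$ is a single Jordan curve enclosing the removed disk, i.e.\ pinning down the topology of $\widetilde{O}_{t}$. This rests only on the maximum and minimum principles and the simple connectivity of $\Omega$ (entering through connectedness of $\mathbb{C}\setminus\Omega$), but the point-set bookkeeping has to be carried out carefully. Once it is in hand, the winding number computation is routine, modulo the Umlaufsatz for $C^{1}$ Jordan curves and the argument principle for quasiregular mappings, both standard.
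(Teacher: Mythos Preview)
Your argument is correct and follows essentially the same route as the paper: pick two regular level curves of $u$, observe that the (quasi)regular complex gradient has winding number $\pm1$ along each (you phrase this as ``$\nabla u$ is the inward normal, Umlaufsatz''; the paper phrases it as ``$u_{z}\,\tfrac{dz}{ds}$ is purely imaginary, Gauss--Bonnet''), and conclude via the argument principle for quasiregular maps that there are no zeros between the curves. The paper invokes the Sto\"{\i}lov factorization to reduce to the analytic argument principle and simply asserts that each level set is a single Jordan curve, whereas you justify the latter via the maximum/minimum principles and simple connectivity of $\Omega$; this fills a small gap the paper leaves implicit.
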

\begin{proof}
Since (\ref{flaplace}) is invariant under dilation and translation 
we may assume that $D=\Omega\setminus \overline{B(0,1)}$. To prove Lemma \ref{nablaisnotzeroinomega} we use the principle of the argument. 
Indeed, we use the principle of the argument for a $K-$quasiregular mapping.

Let $u_{z}=u_{x_{1}}-\ri u_{x_{2}}$. We know from the previous subsection that $u_{z}$ is a non constant $K-$quasiregular 
mapping and therefore that the zeros of $u_{z}$ are isolated and countable in $D$. 
Hence, there exist $0<t_{0}<t_{1}<1$ with
 $t_{0}$ arbitrarily close to $0$ and $t_{1}$ arbitrarily close to $1$ such that
 $u_{z} \neq 0$ on $\gamma_{j}=\{z\in D;\,\, u(z)=t_{j}\}$ for $j=0,1$. $K-$quasiregularity of $u_{z}$ implies
that $u_{z}$ is $\alpha'''-$H\"older continuous for some $0<\alpha'''<1$. Then from Lemma \ref{localholderfornablau} $\gamma_{j}$, $j=0,1$, is a $C^{1,\alpha'''}$ 
Jordan curve and without loss of generality we can assume that
$\gamma_{j}$ is oriented counterclockwise for $j=0,1$.

Let $\Gamma_{j}=u_{z}(\gamma_{j})$ for $j=0,1$. We claim that 
\begin{align}
\label{principleofargument}
\frac{1}{2\pi\ri}\left(\int\limits_{\Gamma_{0}}\frac{\rd w}{w}-\int\limits_{\Gamma_{1}}\frac{\rd w}{w}\right)=\#\, \mathrm{of\ zeros\ of}\, \, u_{z}(z)\, \mathrm{in}\, \, \{z\in D;\, t_{0}<u(z)<t_{1}\}	
\end{align}
Indeed, (\ref{principleofargument}) is well-known if $u_{z}$ is an analytic function as follows from the "principle of the argument".

We prove (\ref{principleofargument}) using this idea and the Sto\"ilov factorization theorem, that is
\begin{align}
\label{uishg}
u_{z}(z)=h\circ g(z), \, \, z\in D
\end{align}
where $h$ is an analytic function in $g(D)$ and $g$ is a $K-$quasiconformal mapping of $D$. Then
\begin{align}
 \partial g(\{z\in D;\, t_{0}<u(z)<t_{1}\}) = \tau_{0}\cup \tau_{1}=(g\circ\gamma_{0})\cup (g\circ\gamma_{1})
\end{align}
where $\tau_{j}=g\circ\gamma_{j}$ is a $C^{\beta}$ Jordan curve for some $0<\beta<1$, oriented counterclockwise for $j=0,1$. 
Applying the principle of the argument to $h$ as in (\ref{principleofargument})
we get
\begin{align}
\label{principleofargumentforf}
 \frac{1}{2\pi\ri}\left[\triangle\, \mathrm{arg}\, (h\circ\tau_{0})-\triangle\, \mathrm{arg}\, (h\circ\tau_{1})\right]=\#\, \mathrm{of\ zeros\ of}\, \, h\, \, \mathrm{in}\,\, g(\{z\in D;\, t_{0}<|z|<t_{1}\}).
\end{align}
Here $\triangle\, \mathrm{arg}\, (h\circ\tau_{j})$, $j=0,1$, denotes the change in the argument of $h\circ\tau_{j}$ as $\tau_{j}$ 
is traversed counterclockwise. (\ref{principleofargument}) follows
from the fact that $g^{-1}$ is a homeomorphism of $\mathbb{C}$ onto $\mathbb{C}$ and (\ref{principleofargumentforf})(See \cite{AIM}).

Now, let $z_{j}(s)$, $0\leq s \leq 1$ be a parametrization of $\gamma_{j}$ for $j=0,1$. Since $\gamma_{j}$ is $C^{1,\alpha}$ we have
\begin{align}
\begin{split}
 0=\frac{\rd }{\rd s} (t_{j})&=\frac{\rd }{\rd s}(u(z_{j}(s)))\\
			    &= u_{z}\,\frac{\rd z_{j}(s)}{\rd s}+ u_{\overline{z}}\, \frac{\rd \overline{z_{j}}(s)}{\rd s}\\
			    &=2\, \mathrm{Re}[u_{z}\, \frac{\rd z_{j}(s)}{\rd s}].
\end{split}
\end{align}
Therefore, $u_{z}\frac{\rd z_{j}(s)}{\rd s}$ is always pure imaginary on $\gamma_{j}$, $j=0,1$, and so
\begin{align}
\label{sumofarguments}
\begin{split}
0&=\triangle\, \mathrm{arg}\, [u_{z}\, \frac{\rd z_{j}(s)}{\rd s}] \\
&=\triangle\, \mathrm{arg}\, u_{z}(\gamma_{j})+\triangle\, \mathrm{arg}\, \frac{\rd z_{j}(s)}{\rd s}.
\end{split}
\end{align}
From (\ref{sumofarguments}) we see that
\begin{align}
\label{arguzisargubarz}
\triangle\, \mathrm{arg}\, u_{z}(\gamma_{j})=-\triangle\, \mathrm{arg}\, \frac{\rd z_{j}(s)}{\rd s}
\end{align}

Finally, as $\gamma_{j}$, $j=0,1$ is a Jordan curve oriented counterclockwise, it follows from the Gauss-Bonnet Theorem that
\begin{align}
 \label{gaussbonnet}
 \frac{1}{2\pi}\triangle\, \mathrm{arg}\, \frac{\rd z_{j}}{\rd s}=1\,\, \mbox{for}\, \, j=1,2.
\end{align}
Another way to prove (\ref{gaussbonnet}) using analytic function theory is to use the Riemann mapping theorem to first get $\psi_{j}$ 
mapping $\{z\;\, |z|<1\}$ 
onto $G_{j}=:$ inside of $\gamma_{j}$, $j=0,1$. As in \cite{T} it follows that $\psi_{j}$ extends to a $C^{1,\beta}$ 
homeomorphism of $\{z\;\, |z|\leq 1\}$
onto $\overline{G_{j}}$. Then we can put 
\begin{align}
 z_{j}(s)=\psi_{j}(e^{2\pi\ri s}),\, \, 0\leq s \leq 1,
\end{align}
and observe that
\begin{align}
 \frac{\rd z_{j}(s)}{\rd s}=2\pi\ri \psi'_{j}(e^{2\pi\ri s})e^{2\pi\ri s}.
\end{align}
Then on $\{z;\, |z|=1\}$ we have
\begin{align}
\label{sumis1}
\begin{split}
 \triangle\, \mathrm{arg}\, \frac{\rd z}{\rd s} &= \triangle\, \mathrm{arg}\, \psi_{j}'(z)+\triangle\, \mathrm{arg}\, z\\
						 &=0+2\pi=2\pi.
\end{split}
\end{align}
In view of (\ref{uishg}), (\ref{principleofargumentforf}), (\ref{arguzisargubarz}), and (\ref{gaussbonnet}) we conclude
$u_{z}\neq 0$ in $G_{1}\setminus G_{0}$, i.e between the level sets $\gamma_{0}$ and $\gamma_{1}$. 
Using this observation and letting $t_{0}\to 0$, $t_{1}\to 1$ in (\ref{principleofargument}) we have the desired result, $u_{z}\neq 0$ in $D$.
\end{proof}
Next we state the fundamental inequality from \cite{LNP}.
\begin{lemma}
\label{finequality}
Let $u$ be a capacitary function defined after Lemma \ref{uismu} for 
$D=\Omega\setminus \overline{B(z_{0}, d(z_{0},\partial\Omega)/4)}$. Then there is a constant $c=c(f,p)$ such that 
\begin{align}
\tfrac{1}{c}\frac{u(z)}{d(z,\partial\Omega)}\leq |\nabla u(z)| \leq c \frac{u(z)}{d(z,\partial\Omega)} 
\end{align}
whenever $z\in D$ and $d(z,\partial\Omega)\geq \frac{d(z_{0},\partial\Omega)}{2}$.
\end{lemma}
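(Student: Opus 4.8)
The plan is to prove the two halves of the inequality separately. Throughout write $d(z)=d(z,\partial\Omega)$, set $\rho_{0}=d(z_{0})/4$, so that $D=\Omega\setminus\overline{B(z_{0},\rho_{0})}$, recall $u\equiv 0$ on $\partial\Omega$ and $u\equiv 1$ on the smooth sphere $\partial B(z_{0},\rho_{0})$, and note that by the maximum principle (Lemma~\ref{harnackinequality}) we have $0\le u\le 1$ in $D$; the admissible range is $z\in D$ with $d(z)\ge 2\rho_{0}$, which in particular already includes a neighborhood of $\partial B(z_{0},\rho_{0})$ in $D$.

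\emph{Upper bound.} For $z$ with $d(z,\partial B(z_{0},\rho_{0}))\ge\tfrac14 d(z)$, the ball $B(z,s)$ with $s=\tfrac14 d(z)$ lies in $D$ and $u$ is a positive weak solution of (\ref{flaplace}) there; the interior estimates of the previous subsection (Lemma~\ref{caccioppolitypeinequality}, local boundedness and the $C^{1,\alpha'''}$ bound (\ref{nablauisholdercont}) of Lemma~\ref{localholderfornablau}) give the gradient bound $\esssup_{B(z,s/2)}|\nabla u|\le \tfrac{c}{s}\esssup_{B(z,s)}u$, while Harnack's inequality (Lemma~\ref{harnackinequality}), applied along a short chain of balls in $B(z,s)$, gives $\esssup_{B(z,s)}u\le c\,u(z)$; combining these yields $|\nabla u(z)|\le c\,u(z)/d(z)$. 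For the remaining admissible $z$, which lie within distance $\tfrac14 d(z)$ of $\partial B(z_{0},\rho_{0})$, one has $d(z)\approx\rho_{0}$ and, running Harnack from $\partial B(z_{0},\rho_{0})$ where $u\equiv 1$, also $u(z)\approx 1$; so it is enough to know that $|\nabla u|$ is bounded near the smooth inner sphere, which follows from boundary regularity for (\ref{flaplace}) with smooth data (or from Lemma~\ref{localholderfornablau} in a ball of radius comparable to $d(z,\partial B(z_{0},\rho_{0}))$ together with $u\le 1$).

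\emph{Lower bound, following \cite{LNP}.} Near the inner sphere this is immediate: on the compact set $\{z\in\overline{D}:\ d(z)\ge 2\rho_{0},\ d(z,\partial B(z_{0},\rho_{0}))\le\rho_{0}\}$ we have $u\approx 1$, $d(z)\approx\rho_{0}$, and $\nabla u\ne 0$ — inside $D$ by Lemma~\ref{nablaisnotzeroinomega}, and on $\partial B(z_{0},\rho_{0})$ by the Hopf-type boundary point lemma (as $u$ attains its maximum there) together with continuity of $\nabla u$ up to this smooth sphere — so $|\nabla u|\ge c^{-1}$ there. For the other $z$, let $w\in\partial\Omega$ be a nearest boundary point to $z$ and put $d=d(z)=|z-w|$. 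Because $\Omega$ is simply connected, $\mathbb{C}\setminus\Omega$ contains a continuum joining $w$ to $\partial B(w,2d)$ that is uniformly fat in the sense of $p$-capacity with a universal constant; this is exactly the mechanism producing the boundary decay (\ref{maxinballismaxin2b}) of Lemma~\ref{boundaryharnackinequalty}. Suppose, toward a contradiction, that $|\nabla u(z)|\le\varepsilon\,u(z)/d$ for small $\varepsilon$. Since $\nabla u\ne 0$ throughout $D$, Lemma~\ref{logfgraduissoln} applies, $h=\log|\nabla u|$ solves a uniformly elliptic divergence form equation with a Harnack inequality, and hence $|\nabla u|\le c\varepsilon\,u(z)/d$ on $B(z,\beta d)$ for a fixed $\beta\in(0,1)$. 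Integrating $\nabla u$ along segments in $B(z,\beta d)$ directed toward $w$ and then iterating along a Harnack chain of balls $B(z_{k},\beta d(z_{k}))\subset D$ with $z_{k}\to w$ — using the upper bound already proved to control how fast $u$ may decrease at each stage, and the barrier estimate (\ref{maxinballismaxin2b}) from the fat continuum to force how fast $u$ must vanish at $w$ — produces a contradiction once $\varepsilon$ is small enough; tracking constants (which depend only on $f$, $p$, and the universal fatness constant, the ratio of the removed radius to $d(z_{0})$ being fixed equal to $\tfrac14$) gives $|\nabla u(z)|\ge c^{-1}u(z)/d(z)$.

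The main obstacle is precisely this last contradiction argument. In \cite{LNP} it rests on sharp interior and boundary regularity for the $p$-Laplacian and on Harnack's inequality for $\log|\nabla u|$; here it must be re-run with the weaker regularity available for (\ref{flaplace}), which is exactly why the preceding subsection was devoted to $K$-quasiregularity of $\nabla u$, the interior $C^{1,\alpha'''}$ estimate (\ref{nablauisholdercont}), the non-vanishing of $\nabla u$ on $D$ (Lemma~\ref{nablaisnotzeroinomega}), and the fact that $\log|\nabla u|$ solves a uniformly elliptic equation satisfying Harnack (Lemma~\ref{logfgraduissoln}). With those inputs in place the scheme of \cite{LNP} carries over, and the resulting constant depends only on $f$ and $p$.
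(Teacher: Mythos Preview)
Your proposal is correct and follows the same route as the paper: the paper's own proof is simply the observation that the argument in \cite{LNP} uses only Harnack's inequality for $u$, H\"older continuity of $u$ near $\partial\Omega$, and Harnack's inequality for $\log|\nabla u|$ when $\nabla u\neq 0$ --- all of which are available here via Lemmas~\ref{harnackinequality}, \ref{boundaryharnackinequalty}, \ref{nablaisnotzeroinomega}, and \ref{logfgraduissoln} --- so the \cite{LNP} proof carries over unchanged. You have supplied a more explicit sketch of that same \cite{LNP} scheme (upper bound by interior gradient estimate plus Harnack; lower bound by a contradiction argument propagating smallness of $|\nabla u|$ via Harnack for $\log|\nabla u|$ against the boundary decay forced by simple connectivity), which is exactly what the paper is invoking.
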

\begin{proof}
Fix $p$, $1<p<\infty$, and let $u$ Let $u$ be a capacitary function defined after Lemma \ref{uismu} for 
$D=\Omega\setminus \overline{B(z_{0}, d(z_{0},\partial\Omega)/4)}$. The proof in \cite{LNP} uses only Harnack's inequality 
for a $p-$harmonic function and H\"older continuity of $u$ as well as Harnack's inequality for $\log|\nabla u|$ when $\nabla u\neq 0$. Since our 
function $u$ has these properties we conclude that (\ref{finequality}) is also valid in our situation (For more details see \cite[Theorem 1.5]{LNP}).
\end{proof}
%%%%%%%%%%%%%%%%%%%%%%%%%%%%%%%%%%%%%%%%%%%
%\subsection{$\boldsymbol{\log f(\nabla u)}$ is a weak sub solution, solution or super solution to {L}}
%\label{loguisweaksoln}
 
\begin{lemma}
\label{loguisweaksoln}
Let $u$ be a capacitary function for $D$ defined after Lemma \ref{uismu} and let $f$ be as in 
Theorem \ref{maintheorem}. Then $v=\log f(\nabla u)$ is a weak sub solution, solution or super
 solution to $L\zeta=0$ respectively when $2<p<\infty$, $p=2$ or $1<p<2$. where
\begin{align}
\label{Lzeta0}
L\zeta\, = \, \sum\limits_{k,j=1}^{2}\frac{\urpartial }{\urpartial x_{k}}\left(f_{\eta_{j}\eta_{k}}(\nabla u) \frac{\urpartial \zeta}{\urpartial x_{j}}\right).
\end{align}
\end{lemma}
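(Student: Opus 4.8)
The plan is to compute $Lv$ weakly, where $v = \log f(\nabla u)$, by first establishing the relevant PDEs for $u$ and its derivatives and then chaining them through the chain rule. First I would record that by Lemma \ref{localholderfornablau} and Lemma \ref{nablaisnotzeroinomega}, $\nabla u$ is $K$-quasiregular, H\"older continuous, locally in $W^{1,2}$, and nonvanishing in $D$, so $f(\nabla u) > 0$ is bounded away from $0$ on compact subsets by Lemma \ref{fishomdp} and (\ref{frrM'}), and $v$ is a well-defined function locally in $W^{1,2}(D)$. Next, differentiating the pointwise Euler--Lagrange equation (\ref{flaplace}) in the direction $x_\ell$ shows, after the usual approximation argument via $\hat u_\varepsilon$ solving (\ref{epsilonweaksoln}) and passage to the limit using the uniform estimates of subsection \ref{madvreg}, that each partial derivative $u_{x_\ell}$ is a weak solution of $L\zeta = 0$, with $L$ as in (\ref{Lzeta0}); this uses that $f$ is homogeneous of degree $p$, so $f_{\eta_j\eta_k}$ is homogeneous of degree $p-2$, and that $D^2 f(\nabla u)$ satisfies the nondegeneracy bounds $\tfrac1c|\nabla u|^{p-2}|\xi|^2 \le \xi^T D^2f(\nabla u)\xi \le c|\nabla u|^{p-2}|\xi|^2$ coming from (\ref{frrbddKM}), (\ref{star}), so $L$ is locally uniformly elliptic on compact subsets of $D$ where $\nabla u \ne 0$.

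The heart of the computation is the chain rule for $L$ applied to $v = \log f(\nabla u)$. Writing $a_{kj} = f_{\eta_j\eta_k}(\nabla u)$ and $w = f(\nabla u)$, one has $v_{x_j} = w_{x_j}/w$, and hence, at least formally,
\begin{align}
\label{Lv-identity}
Lv \;=\; \sum_{k,j} \frac{\urpartial}{\urpartial x_k}\!\left(a_{kj}\frac{w_{x_j}}{w}\right)
\;=\; \frac{1}{w}\sum_{k,j}\frac{\urpartial}{\urpartial x_k}(a_{kj} w_{x_j})
\;-\; \frac{1}{w^2}\sum_{k,j} a_{kj} w_{x_j} w_{x_k}.
\end{align}
The second term on the right of (\ref{Lv-identity}) is $\le 0$ by ellipticity of $(a_{kj})$, so the sign of $Lv$ is governed by the first term, i.e.\ by $L(w) = L(f(\nabla u))$. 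To evaluate $Lw$, I would use Euler's identity for the homogeneous function $f$: since $f$ is homogeneous of degree $p$, $\sum_j \eta_j f_{\eta_j}(\eta) = p f(\eta)$, and differentiating once more, $\sum_j \eta_j f_{\eta_j\eta_k}(\eta) = (p-1) f_{\eta_k}(\eta)$. Combining this with the fact that each $u_{x_\ell}$ solves $L\zeta = 0$ and with the product/chain rule $w_{x_j} = \sum_\ell f_{\eta_\ell}(\nabla u)\,u_{x_\ell x_j}$, one obtains after reorganizing the sums
\begin{align}
\label{Lw-identity}
Lw \;=\; (p-2)\sum_{k,j,\ell,m} f_{\eta_\ell\eta_m}(\nabla u)\, u_{x_\ell x_j}\, u_{x_m x_k}\,\big(\text{positive combinatorial weight}\big),
\end{align}
the key point being that the ``missing'' factor relative to the $p=2$ case is exactly $p-2$ (when $p=2$, $f_{\eta_j\eta_k}$ is constant along rays and the second-derivative terms telescope to zero). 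Thus $Lw$ has the sign of $p-2$ times a nonnegative quadratic form in $D^2 u$, so $Lw \ge 0$ if $p>2$, $Lw = 0$ if $p=2$, and $Lw \le 0$ if $p<2$; dividing by $w>0$ and subtracting the nonpositive term in (\ref{Lv-identity}) gives $Lv \ge 0$, $=0$, $\le 0$ in the three respective cases, which is the assertion.

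The main obstacle, and the part requiring genuine care rather than routine algebra, is justifying all of the above \emph{weakly}, since $f$ is only $\delta$-monotone and need not be $C^2$, so $f_{\eta_j\eta_k}$ exists only a.e.\ and the formal manipulations above involve third derivatives of $u$ that do not classically exist. The remedy is the standard regularization: work first with $\hat u_\varepsilon$ and $f_\varepsilon = f\ast\theta_\varepsilon$ as in (\ref{epsilonweaksoln})--(\ref{nondivergencesolnu}), for which everything is smooth and the identities (\ref{Lv-identity})--(\ref{Lw-identity}) hold classically with $\varepsilon$-dependent coefficients; then use the $\varepsilon$-uniform bounds from (\ref{epsilonstar}), the $\varepsilon$-uniform $W^{2,2}$ and H\"older bounds on $\nabla \hat u_\varepsilon$ from subsection \ref{madvreg}, the local lower bound $|\nabla \hat u_\varepsilon| \ge c^{-1} > 0$ (which transfers from $\nabla u \ne 0$ via uniform convergence on compacta), and the a.e.\ convergence $\nabla \hat u_{\varepsilon_i} \to \nabla u$, $f_{\varepsilon_i}, \nabla f_{\varepsilon_i} \to f, \nabla f$ together with dominated convergence, to pass to the limit in the weak formulation $\int \sum_{k,j} a_{kj}^\varepsilon v^\varepsilon_{x_j}\phi_{x_k}\,\rd\nu$ against a fixed test function $\phi \in C_0^\infty(B(\tilde w, 4s))$, $\phi \ge 0$. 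One subtlety is that the quadratic-form term in (\ref{Lw-identity}) must be controlled in $L^1_{\mathrm{loc}}$ uniformly in $\varepsilon$; this follows from the Cacciopoli-type estimate $\int_{B(\tilde w,s)} |\nabla \hat u_\varepsilon|^{p-2}\sum (\hat u_{\varepsilon,x_kx_j})^2\,\rd\nu \le c(t-s)^{-2}\int_{B(\tilde w,t)}|\nabla \hat u_\varepsilon|^p\,\rd\nu$ recorded in Lemma \ref{localholderfornablau}. I would also invoke Lemma \ref{logfgraduissoln} (with $\log|\nabla u|$) and the relation $\log f(\nabla u) = p\log|\nabla u| + \log f(\nabla u/|\nabla u|)$ only if needed as a consistency check, but the direct computation above is cleaner.
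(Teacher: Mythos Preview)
Your decomposition $Lv=\tfrac{1}{w}Lw-\tfrac{1}{w^2}\sum_{k,j}a_{kj}w_{x_j}w_{x_k}$ with $w=f(\nabla u)$ is correct and is in fact the same split as the paper's $Lv=I'+I''$ (where $I'=Lw/w$ and $I''$ is minus the quadratic term). The gap is in what you do with it. Your assertion that $Lw$ carries the sign of $(p-2)$ is false: using $Lu_{x_\ell}=0$ and $w_{x_j}=\sum_\ell f_{\eta_\ell}(\nabla u)\,u_{x_\ell x_j}$ one finds
\[
Lw \;=\; \sum_{n,k,j,l} \mathfrak b_{nl}\,\mathfrak b_{kj}\,u_{x_l x_k}\,u_{x_n x_j}
\;=\;\mathrm{tr}\big((D^2f(\nabla u)\cdot D^2u)^2\big),
\]
which is $\ge 0$ for \emph{every} $p$ (write $B=D^2f$, $U=D^2u$; then $\mathrm{tr}((BU)^2)=\mathrm{tr}((B^{1/2}UB^{1/2})^2)\ge 0$). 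No factor $(p-2)$ appears. And even had your claim been true, the inference fails: for $p>2$ you would have $Lv=(\text{nonneg})+(\text{nonpos})$, which decides nothing, and for $p=2$ you would get $Lv\le 0$ rather than $Lv=0$.

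What actually produces the factor $(p-2)$ is an \emph{exact} two-dimensional cancellation between $I'$ and $I''$. The pointwise PDE says $M:=D^2f(\nabla u)\cdot D^2u$ has $\mathrm{tr}\,M=0$, so by Cayley--Hamilton in $2\times 2$ one has $M^2=-\det(M)\,I$, whence $I'=Lw/w=-2\det(M)/f$. For $I''$ one must use the Euler relations $D^2f\cdot\nabla u=(p-1)\nabla f$ and $(\nabla u)^{\!T}D^2f\,\nabla u=p(p-1)f$ to compute it \emph{exactly} as $I''=\tfrac{p}{p-1}\det(M)/f$; merely knowing $I''\le 0$ from ellipticity is not enough. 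Summing gives $I'+I''=-\tfrac{p-2}{p-1}\det(M)/f$, and a separate rearrangement of the pointwise PDE shows $\det(D^2u)\le 0$, hence $\det(M)\le 0$. Finally, the paper does not route the justification through the $\varepsilon$-regularization: since $u\in W^{2,2}_{\mathrm{loc}}(D)$ and $\nabla u\ne 0$ (Lemmas \ref{localholderfornablau}, \ref{nablaisnotzeroinomega}, \ref{finequality}), one has $f_{\eta_n}(\nabla u)/f(\nabla u)\in W^{1,2}_{\mathrm{loc}}(D)$ and can test the weak equation $Lu_{x_n}=0$ directly against $\tfrac{f_{\eta_n}(\nabla u)}{f(\nabla u)}\,\phi$.
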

\begin{remark}
When $f$ in (\ref{flaplace}) is smooth enough and homogeneous of degree p in $\mathbb{C}\setminus\{0\}$ and $u$
is smooth enough as well as a pointwise solution to (\ref{Lzeta0}), then in \cite[Theorem 1]{ALV} it is shown by a direct calculation that $\log f(\nabla u)$ is a sub solution, 
solution or super solution to the partial differential equation in (\ref{Lzeta0}) respectively when $2<p<\infty$, $p=2$ or $1<p<2$.
\end{remark}
\begin{proof}[Proof of Lemma \ref{loguisweaksoln}]
Using Lemmas \ref{localholderfornablau}, \ref{finequality} in (\ref{flaplace}) with $u'=u$, we find for $\phi\in C^{\infty}_{0}(D)$ that
\begin{align}
 \label{intbypartszlsoln}
\begin{split}
0&=\int\limits_{\Omega}\langle \nabla f(\nabla u), \nabla \phi_{x_{l}}\rangle \rd\nu
=-\int\limits_{\Omega}\sum\limits_{k=1}^{2}  \frac{\urpartial (f_{\eta_{k}}(\nabla u))}{\urpartial x_{l}}\, \phi_{x_{k}} \rd\nu \\
&=-\int\limits_{\Omega}\sum\limits_{k,j=1}^{2}  f_{\eta_{k}\eta_{j}}(\nabla u))(u_{x_{l}})_{x_{j}} \phi_{x_{k}} \rd\nu.
\end{split}
\end{align} 
From homogeneity of $f$ and Euler's formula we have
\begin{align}
\label{homogenandeuler}
\sum\limits_{j=1}^{2}\eta_{j}f_{\eta_{k}\eta_{j}}(\eta)=(p-1)f_{\eta_{k}}(\eta)\, \, \, \mathrm{and}\, \, \, \sum\limits_{j=1}^{2} \eta_{j}f_{\eta_{j}}(\eta)=pf(\eta) 
\end{align}
for $k=1,2$ and for a.e. $\eta$. Then it follows from (\ref{intbypartszlsoln}) and (\ref{homogenandeuler}) that
\begin{align}
\label{Luiszero}
\begin{split}
\int\limits_{\Omega}\sum\limits_{k,j=1}^{2}f_{\eta_{j}\eta_{k}}(\nabla u)u_{x_{j}} \phi_{x_{k}}\, \rd\nu=(p-1)\int\limits_{\Omega}\sum\limits_{k=1}^{2} f_{\eta_{k}}(\nabla u) \phi_{x_{k}}\rd\nu=0.
\end{split}
\end{align}
From (\ref{Luiszero}) we see that $\zeta=u$ is also a weak solution to $L\zeta=0$. We note also that since $u,f\in W^{2,2}_{\mbox{\tiny{loc}}}(D)$ thanks to
Lemmas \ref{fishomdp}, \ref{localholderfornablau}, and \ref{finequality} then for $\nu$ a.e $z\in \Omega$ 
\begin{align}
 \label{aezero}
0=\sum\limits_{k,l=1}^{2}f_{\eta_{k}\eta_{l}}(\nabla u(z))u_{x_{k}x_{l}}(z).
\end{align}
Let $v=\log f(\nabla u)$, $\mathfrak{b}_{ij}=f_{\eta_{i}\eta_{j}}(\nabla u)$, $D^{2}u=(u_{x_{i}x_{j}})$, $D^{2}f=(f_{\eta_{i}\eta_{j}})$ and observe that
\begin{align}
 \label{bijvij}
\mathfrak{b}_{kj}v_{x_{j}}=\frac{1}{f(\nabla u)}\sum\limits_{n=1}^{2}f_{\eta_{n}}(\nabla u)\mathfrak{b}_{kj}u_{x_{n}x_{j}}.
\end{align}
Using (\ref{bijvij}) we see that
%----------------
\begin{align}
\label{IandII}
 \begin{split}
\int\limits_{\Omega}\sum\limits_{k,j=1}^{2}\mathfrak{b}_{kj}v_{x_{j}}\phi_{x_{k}}\, \rd\nu
&=\int\limits_{\Omega}\sum\limits_{k,j=1}^{2}\frac{1}{f(\nabla u)}\sum\limits_{n=1}^{2}\mathfrak{b}_{kj}f_{\eta_{n}}(\nabla u)u_{x_{n}x_{j}}\phi_{x_{k}}\, \rd\nu\\
&=-\int\limits_{\Omega}\sum\limits_{n,k,j=1}^{2}\frac{\urpartial }{\urpartial x_{k}}\left(\frac{f_{\eta_{n}}(\nabla u)}{f(\nabla u)}\right)\mathfrak{b}_{kj}u_{x_{n}x_{j}}\, \phi\, \rd\nu
\end{split}
\end{align}
where to get the last line in (\ref{IandII}) we have used
\begin{align}
 \label{IandIIcomp}
0=\int\limits_{\Omega}\sum\limits_{n,k,j=1}^{2}\mathfrak{b}_{kj}u_{x_{n}x_{j}}\frac{\urpartial }{\urpartial x_{k}}\left(\frac{f_{\eta_{n}}(\nabla u)}{f(\nabla u)}\phi\right)\rd\nu.
\end{align}
(\ref{IandIIcomp}) is a consequence of (\ref{intbypartszlsoln}) with $n=l$ and $\phi$ replaced by $\frac{f_{\eta_{n}}(\nabla u)}{f(\nabla u)}\phi$ as well as the fact that 
\[
\frac{f_{\eta_{n}}(\nabla u)}{f(\nabla u)}\in W^{1,2}_{\mbox{\tiny{loc}}}(D).
\]
From (\ref{IandII}) we have
\begin{align}
 \label{IandIIimp}
\begin{split}
\int\limits_{\Omega}\sum\limits_{k,j=1}^{2}\mathfrak{b}_{kj}v_{x_{j}} \phi_{x_{k}}\, \rd\nu&=-\int\limits_{\Omega}\sum\limits_{n,k,j=1}^{2}\frac{\urpartial }{\urpartial x_{k}}\left(\frac{f_{\eta_{n}}(\nabla u)}{f(\nabla u)}\right)\mathfrak{b}_{kj}u_{x_{n}x_{j}}\, \phi\, \rd\nu \\
&=-\int\limits_{\Omega}(I'+I'')\phi\rd\nu 
\end{split}
\end{align}
where
\begin{align}
\label{I'}
I'=\sum\limits_{n,j,k,l=1}^{2}\frac{1}{f(\nabla u)}\, \mathfrak{b}_{nl}\mathfrak{b}_{kj}u_{x_{l}x_{k}}u_{x_{n}x_{j}}
\end{align}
and
\begin{align}
\label{I''}
I''=-\frac{1}{f^{2}(\nabla u)}\,\sum\limits_{n,j,k,l=1}^{2}\mathfrak{b}_{kj}f_{\eta_{n}}(\nabla u)f_{\eta_{l}}(\nabla u)u_{x_{l}x_{k}}u_{x_{n}x_{j}}.
\end{align}
We can rewrite (\ref{I'}) and (\ref{I''}) using matrix notation. First notice that (\ref{aezero}) becomes
\begin{align}
\label{d2fd2uw}
\mathrm{tr}\left(D^{2}f \cdot D^{2}u\right)=0\, \, \mbox{for}\, \, \nu\, \, \mbox{a.e}\, \, z\, \, \mbox{in}\, \, D.
\end{align}
It follows from (\ref{d2fd2uw}) that there exists $\mathfrak{m}, \mathfrak{n}, \mathfrak{l}$ such that
\begin{align}
\label{traceD}
D^{2}f \cdot D^{2}u=\begin{bmatrix} \mathfrak{m} & \mathfrak{n} \\ \mathfrak{l} & -\mathfrak{m} \end{bmatrix}
\, \, \mbox{for}\,\, \nu\, \, \mbox{a.e}\, \, z\, \, \mbox{in}\, \, D.
\end{align}
Squaring both sides of (\ref{traceD}) gives that
\begin{align}
\label{traceofop}
 (D^2f \cdot D^2u)^2 = (\mathfrak{m}^2+\mathfrak{n}\mathfrak{l})\begin{bmatrix} 1 & 0 \\ 0 & 1\end{bmatrix} = -\det(D^2f \cdot D^2u)I \, \, \mbox{for}\, \, \nu\, \, \mbox{a.e}\, \, z \, \, \mbox{in}\, \, \Omega.
\end{align}
Using (\ref{traceD}) and (\ref{traceofop}) we can write (\ref{I'}) as
\begin{align}
\label{I'inmatrix}
\begin{split}
I'&=\frac{1}{f}\,\text{tr}\left((D^{2}f \cdot D^{2}u)^{2}\right)\\
&=-\frac{\det(D^{2}f\cdot D^{2}u)}{f}\, \text{tr}(I)\\
&=-2\frac{\det(D^{2}f\cdot D^{2}u)}{f}.
\end{split}
\end{align}
To handle (\ref{I''}) note from symmetry of $D^{2}u$ and $D^{2}f$ that
\[
 \sum\limits_{k,j=1}^{2}\mathfrak{b}_{kj}u_{x_{l}x_{k}}u_{x_{n}x_{j}}
\]
is the $ln$ element of $D^{2}u\cdot D^{2}f\cdot D^{2}u$. 

Using homogeneity of $f$ for $\nu$ a.e $z$ in $D$ we obtain
\begin{align}
\label{I''inmatrix}
\begin{split}
I''&=-\frac{1}{f^{2}}\,\text{tr}\left(Df \cdot (Df)^{\mbox{\tiny{T}}}\cdot D^{2}u \cdot D^{2}f \cdot D^2 u\right) \\
&=-\frac{1}{f^{2}}\,\text{tr}\left(\tfrac{1}{(p-1)^{2}}\, D^{2}f \cdot \nabla u \cdot (D^{2}f \cdot \nabla u)^{\mbox{\tiny{T}}} \cdot D^2u \cdot D^2f \cdot D^2u \right) \\
&=-\frac{1}{f^{2}}\,\text{tr}\left(\tfrac{1}{(p-1)^{2}}D^{2}f \cdot \nabla u \cdot (\nabla u)^{\mbox{\tiny{T}}} \cdot \left(D^2f \cdot D^2u\right)^{2}\right)\\
&=\tfrac{1}{(p-1)^{2}}\frac{\det(D^2f \cdot D^2u)}{f^{2}}\,\text{tr}\left(D^{2}f \cdot \nabla u \cdot (\nabla u)^{\mbox{\tiny{T}}} \right) \\
&=\tfrac{1}{(p-1)^{2}}\frac{\det(D^2f \cdot D^2u)}{f^{2}}\, (\nabla u)^{\mbox{\tiny{T}}} \cdot D^{2}f \cdot  \nabla u\\
&=\tfrac{p(p-1)}{(p-1)^{2}}\frac{f\, \det(D^2f \cdot D^2u)}{f^{2}}\\
&=\tfrac{p}{(p-1)}\frac{\det(D^2f \cdot D^2u)}{f}
\end{split}
\end{align}
where we have used
\begin{align}
\label{squarematrix}
\text{tr}(D^2f \cdot \nabla u \cdot (\nabla u)^{\mbox{\tiny{T}}})=\sum\limits_{l,k=1}^2 \mathfrak{b}_{lk}u_{x_{l}} u_{x_{k}} =(\nabla u)^{\mbox{\tiny{T}}} \cdot D^2f \cdot \nabla u.
\end{align}
 Note that (\ref{I'inmatrix}) and (\ref{I''inmatrix}) imply for $\nu$ a.e $z$ in $D$
\begin{align}
\label{I'plusI''}
\begin{split}
I'+I''&=-2\frac{\det(D^{2}f\cdot D^{2}u)}{f}+\frac{p}{(p-1)}\, \frac{\det(D^2f \cdot D^2u)}{f} \\
      &=-\left(\frac{p-2}{p-1}\right)\, \frac{\det(D^2f \cdot D^2u)}{f}.
\end{split}
\end{align}
Rearranging (\ref{aezero}) for $\nu$ a.e $z$ in $D$  and using Lemma \ref{fishomdp} we find that 
\begin{align}
\label{ud2fut}
\begin{split}
-\frac{\mathfrak{b}_{11}}{|\nabla u|^{p-2}}\, \det(D^2u)&=\frac{1}{|\nabla u|^{p-2}}\left((2\mathfrak{b}_{12}u_{x_{1}x_{2}}+\mathfrak{b}_{22}u_{x_{2}x_{2}})u_{x_{2}x_{2}}+\mathfrak{b}_{11}u_{x_{1}x_{2}}^2\right) \\
&=\frac{1}{|\nabla u|^{p-2}}\left((\nabla u_{x_{2}})^{\mbox{\tiny{T}}} \cdot D^2f \cdot \nabla u_{x_{2}}\right) \approx |\nabla u_{x_{2}}|^{2}.
\end{split}
\end{align}
Likewise,
\begin{align}
\label{ud2futb22}
\begin{split}
-\frac{\mathfrak{b}_{22}}{|\nabla u|^{p-2}}\, \det(D^2u)=\frac{1}{|\nabla u|^{p-2}}\left((\nabla u_{x_{1}})^{\mbox{\tiny{T}}} \cdot D^2f \cdot \nabla u_{x_{1}}\right)\approx |\nabla u_{x_{1}}|^{2}.
\end{split}
\end{align}
Now from (\ref{IandIIimp}) and (\ref{I'plusI''}) we see that
\begin{align}
\label{combd2ud2fpos}
\begin{split}
\int\limits_{\Omega}\sum\limits_{k,j=1}^{2}\mathfrak{b}_{kj}v_{x_{j}}\phi_{x_{k}}\, \rd\nu &=\left(\frac{p-2}{p-1}\right) \int\limits_{\Omega} 
\frac{\det(D^{2}u\cdot D^{2}f)}{f}\, \phi\, \rd\nu \\
&\approx -\left(\frac{p-2}{p-1}\right) \int\limits_{\Omega}\frac{-\mathfrak{b}_{11}\det(D^{2}u)}{|\nabla u|^{p-2}}\, \frac{\det(D^{2}f)}{f}\, \phi\, \rd\nu.
\end{split}
\end{align}
Moreover, we note that combining (\ref{ud2fut}), (\ref{ud2futb22}) and using (\ref{combd2ud2fpos}), Lemma \ref{fishomdp} we have
\begin{align}
\label{finalforLweakly}
 Lv=(p-2)\mathcal{F} \, \, \, \mbox{weakly}
\end{align}
where $\mathcal{F}\approx |\nabla u|^{p-4}\sum\limits_{i,j=1}^{2}(u_{x_{i}x_{j}})^{2}$. From (\ref{finalforLweakly}) we conclude
for $p=2$ that we have $\zeta=v=\log(f(\nabla u))$ is a weak solution to (\ref{Lzeta0}), $L\zeta=0$. Similarly, $\zeta=v$ is a weak sub solution or super solution
to $L\zeta=0$ respectively when $2<p<\infty$ or $1<p<2$.
\end{proof}

\section{Proof of Theorem \ref{maintheorem}}
\label{proofofmain}
In this section, we first obtain Lemma \ref{mainlemma}, and then using this lemma we prove Theorem \ref{maintheorem} for 
fixed $p$ when $1<p\leq 2$ and $2\leq p<\infty$ separately. 
To this end, we shall give the definitions of $u, \Omega, z_{0}, \mu$ again. 

Let $\Omega$ be a bounded simply connected domain in the plane.
Let $z_{0}\in\Omega$ and let $D=\Omega\setminus \overline{B(z_{0}, d(z_{0},\partial\Omega)/4)}$. Let $u$ 
be a capacitary function for $D$. That is,
$u$ is a positive weak solution to (\ref{flaplace}) in $D$ with continuous boundary values, 
$u\equiv 0$ on $\partial\Omega$ and $u\equiv 1$ on $\partial B(z_{0}, d(z_{0},\partial\Omega)/4)$.
Then by Remark \ref{compofmeasures} we have $\hd{\hat{\mu}}=\hd{\mu}$. Therefore, it suffices to prove Theorem \ref{maintheorem} when $u$ is a 
capacitary function and $\mu$ is the measure corresponding to $u$ as in (\ref{ast}).

Let $D$ be as above and let $4\tilde{s}=d(z_{0},\partial\Omega)$ and set $\Xi(z)=z_{0}+\hat{s}z$. 
Then it follows from the fact that (\ref{flaplace}) is invariant 
under translation and dilation that $\tilde{u}=u(\Xi(z))$ for $\Xi(z)\in D$ is also a weak solution to (\ref{flaplace}) in $\Xi^{-1}(D)$. 
Let $\tilde{\mu}$ be the measure corresponding to $\tilde{u}$ in (\ref{ast}). It can be easily shown from (\ref{flaplace}) that
\begin{align}
\label{transdilat}
 \tilde{\mu}(E)=\hat{s}^{p-2}\mu(\Xi(E))\,\,\, \mbox{whenever}\,\, \, E\subset\mathbb{R}^{2}\,\, \mbox{is a Borel set}. 
\end{align}
Clearly, (\ref{transdilat}) implies that $\hd{\tilde{\mu}}=\hd{\mu}$. 
Therefore without loss of generality we can assume that $z_{0}=0$ and $d(z_{0},\partial\Omega)=4$, $D=\Omega\setminus\overline{B(0,1)}$. 

To prove Theorem \ref{maintheorem} we first need a lemma. To this end, let $u$ be a capacitary function for $D=\Omega\setminus \overline{B(0,1)}$ corresponding to $f$, and let $\mu$ be the corresponding Borel measure.
Define
\[
w(z)=\left\{
\begin{array}{ll}
\max(v(z),0) & \mbox{when}\, \, 1<p<2 \\ 
\max(-v(z),0) & \mbox{when}\, \, 2<p<\infty \\
\end{array}
\right.
\]
for $z\in D$ where $v(z)=\log(f(\nabla u)(z))$.
\begin{lemma}
\label{mainlemma}
Let $m$ be a nonnegative integer. There exists $c_{\ast}=c_{\ast}(f,p)\geq 1$ such that for
$0<t<1/2$,
\begin{align}
\label{lemma9}
\begin{split}
 \int\limits_{\{z\in D:\, \, u(z)=t\}} \frac{f(\nabla u)}{|\nabla u|}w^{2m}dH^{1}(z) \leq c_{\ast}^{m+1}m![\log\frac{1}{t}]^{m}.
\end{split}
\end{align}
\end{lemma}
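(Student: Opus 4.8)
The plan is to prove the estimate by induction on $m$, where the inductive step exploits that $w$ is a nonnegative weak subsolution to $L\zeta = 0$ (Lemma \ref{loguisweaksoln}, applied to $-v$ when $p>2$ and to $v$ when $1<p<2$, noting $\max$ of a subsolution with $0$ is again a subsolution; for $p>2$ recall $L(-v) = -(p-2)\mathcal{F}\le 0$ and for $1<p<2$ that $Lv = (p-2)\mathcal{F}\le 0$). I would first reinterpret the left-hand side as a boundary-type integral on the level set $\{u=t\}$, using the coarea formula and the fact that $\tfrac{f(\nabla u)}{|\nabla u|}dH^1$ on $\{u=t\}$ is essentially $d\mu_t$, the measure that $u$ restricted to $\{u>t\}$ generates as in (\ref{ast}). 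Concretely, for a test-type function $\Phi$ one has, by integrating $Lu=0$ and $Lw=(\text{sign})(p-2)\mathcal{F}$ against $\Phi$ over $\{u>t\}$ and using the divergence theorem with the outer normal $-\nabla u/|\nabla u|$ on $\{u=t\}$,
\[
\int_{\{u=t\}} \Phi\,\frac{f(\nabla u)}{|\nabla u|}\,dH^1 \;=\; \int_{\{u>t\}} \langle D^2 f(\nabla u)\nabla u, \nabla \Phi\rangle\,d\nu \;+\;(\text{lower-order terms from }\mathcal F),
\]
since $\sum_{j}\eta_j f_{\eta_k\eta_j}(\eta) = (p-1)f_{\eta_k}(\eta)$ and $\sum_k \eta_k f_{\eta_k}(\eta) = p f(\eta)$ by (\ref{homogenandeuler}) convert the conormal derivative of $u$ into $(p-1)\tfrac{f(\nabla u)}{|\nabla u|}$ up to the constant $p-1$.

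The core of the argument is the choice $\Phi = w^{2m+1}$ (or $w^{2m}$ times a cutoff), which is admissible since $w$ is bounded on $\{u=t\}$ by Lemma \ref{localholderfornablau} and Lemma \ref{finequality} (which give $|\nabla u|\approx u/d(\cdot,\partial\Omega)$, hence $v = \log f(\nabla u)$ is controlled, and $w$ is $L^\infty_{loc}$). Plugging this in produces on the right-hand side a term $(2m+1)\int w^{2m}\langle D^2 f(\nabla u)\nabla w,\nabla w\rangle\,d\nu$ which is \emph{nonnegative} by ellipticity of $D^2f$ on $\nabla u \ne 0$ (Lemma \ref{fishomdp}), plus a term from $Lw$ of the form $\pm(p-2)\int w^{2m+1}\mathcal F\,d\nu$, plus a boundary term on $\{u=1\}$ (or $\partial B(0,1)$) which is bounded by a constant depending only on $f,p$ since there $w$, $|\nabla u|$ and $u$ are all comparable to constants by Harnack (Lemma \ref{harnackinequality}) and the fundamental inequality. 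The $\mathcal F$-term is the delicate one; one absorbs it using $\mathcal F \approx |\nabla u|^{p-4}\sum(u_{x_ix_j})^2$ together with the Caccioppoli-type bound in Lemma \ref{localholderfornablau}, $\int |\nabla u|^{p-2}\sum(u_{x_ix_j})^2 \le \tfrac{c}{(t-s)^2}\int|\nabla u|^p$, rewritten in terms of $w$: since $|\nabla w| \approx |\nabla u|^{-1}\cdot|D^2u|\cdot(\text{bounded})$ away from critical points, one gets $\int w^{2m}|\nabla w|^2 \lesssim \int w^{2m}\mathcal F / |\nabla u|^{p-2}$-type comparisons, so that the $\mathcal F$-term is controlled by the good ellipticity term and by $\int_{\{u=t\}} w^{2m-1}\tfrac{f(\nabla u)}{|\nabla u|}dH^1$ after one more integration by parts peeling off two powers of $w$.

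Assembling these pieces yields a recursion of the shape
\[
\int_{\{u=t\}} w^{2m}\,\frac{f(\nabla u)}{|\nabla u|}\,dH^1 \;\le\; c\,m\int_{\{u=t\}} w^{2m-2}\,\frac{f(\nabla u)}{|\nabla u|}\,dH^1 \;+\; c\,m\int_t^1 \frac{1}{\tau}\Big(\int_{\{u=\tau\}} w^{2m-2}\frac{f(\nabla u)}{|\nabla u|}dH^1\Big)d\tau \;+\; c^{m+1}(m-1)!,
\]
where the $\int d\tau/\tau$ accounts for the region $\{u>t\}$ foliated by level sets and is the source of the $\log(1/t)$ factor. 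Feeding in the inductive hypothesis $\int_{\{u=\tau\}} w^{2m-2}\tfrac{f(\nabla u)}{|\nabla u|}dH^1 \le c_*^m (m-1)![\log\tfrac1\tau]^{m-1}$ and using $\int_t^1 \tfrac1\tau[\log\tfrac1\tau]^{m-1}d\tau = \tfrac1m[\log\tfrac1t]^m$ closes the induction with a new constant $c_* = c_*(f,p)$, after checking the base case $m=0$ which is just $\mu$-finiteness type bound $\int_{\{u=t\}}\tfrac{f(\nabla u)}{|\nabla u|}dH^1 = \mu(\{u>t\}\text{-boundary}) \le \mu(\partial\Omega) + c \le c_*$ via (\ref{ast}) and Lemma \ref{uismu}. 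The main obstacle I anticipate is making the formal integration-by-parts identities rigorous given the limited regularity — $u$ is only $C^{1,\alpha}$ with $\nabla u \in W^{1,2}_{loc}$ and $\nabla u$ may vanish — so one must work on the open set $\{\nabla u \ne 0\}$ (using Lemma \ref{nablaisnotzeroinomega} for the capacitary $u$, which guarantees $\nabla u\ne 0$ on all of $D$, eliminating this issue entirely), approximate $w^{2m+1}$ by truncations, and justify that the level sets $\{u=t\}$ are rectifiable $C^{1,\alpha}$ curves for a.e. $t$ (Sard plus $C^{1,\alpha}$ regularity), so that the coarea formula and the divergence theorem apply; controlling the $\mathcal F$-error term uniformly in $m$ is the quantitative heart of the matter.
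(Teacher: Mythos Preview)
Your overall plan --- exploit the super/subsolution property of $v=\log f(\nabla u)$, integrate by parts over $\{u>t\}$, use the coarea formula, and induct on $m$ --- is the paper's strategy. But the execution has two real gaps.

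First, the test-function choice. Testing $Lu=0$ with $\Phi=w^{2m+1}$ does \emph{not} produce the ellipticity term $\int w^{2m}\langle D^2f(\nabla u)\nabla w,\nabla w\rangle$; it produces the mixed term $\int w^{2m}\langle D^2f(\nabla u)\nabla u,\nabla w\rangle$. Testing the $Lw$-inequality with $w^{2m+1}$ on $\{u>t\}$ gives the ellipticity term, but then the boundary contribution on $\{u=t\}$ involves the conormal of $w$, not of $u$, and is not comparable to $I_m(t)$. The paper's device is to test the supersolution inequality for $v$ with $g^{2m-1}(u-t)$, where $g=\max(w-c',0)$ vanishes near $\partial B(0,1)$: the factor $(u-t)$ kills the boundary term on $\{u=t\}$ from the $Lv$ side, and differentiating $(u-t)$ creates exactly the mixed term; that mixed term is then converted into the level-set integral by testing $Lu=0$ with $g^{2m}$ times a cutoff approximating $\mathbf 1_{\{u>t\}}$. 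This two-step pairing is the point you are missing. (Incidentally, $L(-v)\le 0$ for $p>2$ and $Lv\le 0$ for $p<2$ make $-v$, resp.\ $v$, a \emph{super}solution, not a subsolution; the inequality you need goes the correct way only with that sign.) As a consequence there is no $\mathcal F$-error to ``absorb'': the supersolution inequality already delivers
\[
\int_{\{u=t\}} g^{2m}\,\frac{f(\nabla u)}{|\nabla u|}\,dH^1 \;\le\; c\,m^2\!\int_{\Omega(t)} (u-t)\,|\nabla u|^{p-2}\,|\nabla g|^2\,g^{2m-2}\,d\nu
\]
cleanly.

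Second, the recursion. To control the right-hand side above one uses a Whitney decomposition of $\Omega(t)$ together with Lemma~\ref{localholderfornablau} (Caccioppoli for $D^2u$) and the fundamental inequality on each cube; this step replaces $\sup_{Q_i}g^{2m-2}$ by $(g+\tilde c)^{2m-2}$ because $g$ oscillates by at most a constant on Whitney cubes. After coarea and the binomial expansion of $(g+\tilde c)^{2m-2}$, the resulting recursion involves \emph{all} $I_k$ for $0\le k\le m-1$ with binomial weights, not only $I_{m-1}$ as in your displayed recursion. The induction then closes exactly via $\int_t^1\tau^{-1}(\log\tfrac1\tau)^{m-1}d\tau=\tfrac1m(\log\tfrac1t)^m$, but only after this binomial bookkeeping.
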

\begin{proof}
Define  $g(z)=\max(w(z)-c',0)$, $z\in D$ where $c'$ is large enough so that $g\equiv 0$ in
$B(0,2)\cap D$. Since $u$ is continuous in $D$, there is such a $c'$.

Extend $g$ continuously to $\Omega$ by putting  $g \equiv 0$ in $\overline{B(0,1)}$.
Set $\mathfrak{b}_{ij}=f_{\eta_{i}\eta_{j}}(\nabla u)$ and let $L$ be as in Lemma \ref{loguisweaksoln}.

Let $\Omega(t)= \{z\in D:\, u(z)> t\}$ for $0<t<1/2$ and let $\tilde{u}=\max(u-t,0)$. Note that $g^{2}\in W^{2,\infty}(\Omega(t))$.

Fix $p$, $1<p\leq 2$ until further notice. From Lemma \ref{loguisweaksoln}, $\zeta=v=\log f(\nabla u)$ 
is a weak super solution to $L\zeta=0$ in $D$ (see (\ref{Lzeta0})). Using 
$g^{2m-1}\tilde{u}\geq 0$ as a test function in (\ref{Lzeta0}) for $\zeta=g$ and the fact that $g\equiv 0$ in $B(0,2)$, we get
\begin{align}
\label{II'+II''}
 \begin{split}
0 &\leq 2m\int\limits_{\Omega(t)}\sum\limits_{k,j=1}^{2}\mathfrak{b}_{kj}\frac{\urpartial}{\urpartial x_{j}}(\log f(\nabla u))\frac{\urpartial }{\urpartial x_{k}}\left(g^{2m-1} \tilde{u}\right)\rd\nu \\
  &=2m\int\limits_{\Omega(t)}\sum\limits_{k,j=1}^{2}\mathfrak{b}_{kj}g_{x_{j}}\frac{\urpartial }{\urpartial x_{k}}\left(g^{2m-1}(u-t)\right)\rd\nu \\
&=2m(2m-1)\int\limits_{\Omega(t)}\sum\limits_{k,j=1}^{2}\mathfrak{b}_{kj}g_{x_{j}}g_{x_{k}}g^{2m-2}(u-t)\rd\nu \\
&\, \, +2m\int\limits_{\Omega(t)}\sum\limits_{k,j=1}^{2}\mathfrak{b}_{kj}g_{x_{j}}g^{2m-1}(u-t)_{x_{k}}\rd\nu\\
	&=II'+II''.
\end{split}
\end{align}
We first handle $II''$. To this end, let $\psi\in C^{\infty}_{0}(\{z:\, \, u(z)>t-\varepsilon\})$ with $\psi=1$ on $\overline{\Omega(t)}$.
Then since $\zeta=u$ is a weak solution to (\ref{Lzeta0}) and using $g^{2m}\psi$ as a test function, we obtain
\begin{align}
\label{II''}
 \begin{split}
0&=\int\limits_{\Omega(t-\varepsilon)}\sum\limits_{k,j=1}^{2} \mathfrak{b}_{kj} u_{x_{k}}\frac{\urpartial }{\urpartial x_{j}}\left(\psi g^{2m}\right)\rd\nu \\
&=2m\int\limits_{\Omega(t-\varepsilon)}\sum\limits_{k,j=1}^{2}\mathfrak{b}_{kj} u_{x_{k}} g^{2m-1}g_{x_{j}}\psi\rd\nu+\int\limits_{\Omega(t-\varepsilon)}\sum\limits_{k,j=1}^{2}\mathfrak{b}_{kj} u_{x_{k}} g^{2m}\psi_{x_{j}}\rd\nu\\
&=II''_{1}+II''_{2}.
 \end{split}
\end{align}
Letting $\varepsilon\to 0$ and using the Lebesgue dominated convergence theorem gives $II''_{1}\to II''$. 

We now show that for $H^{1}$ a.e $t\in (0, 1/2)$ and properly chosen $\psi$ that
\begin{align}
\label{II''2}
 II''_{2}\to \int\limits_{\{z\in D:\, \, u(z)=t\}} \sum\limits_{k,j=1}^{2}\mathfrak{b}_{kj}g^{2m}u_{x_{k}} \frac{u_{x_{j}}}{|\nabla \tilde{u}|}\, \,\, \mbox{as}\, \, \varepsilon\to 0.
\end{align}
To this end let $\phi:\mathbb{R}\to \mathbb{R}$ be a $C^{\infty}$ function satisfying $0\leq \phi \leq 1$, and $|\phi'|\leq c/\varepsilon$ such that
\[
\phi(s)=\left\{
\begin{array}{ll}
1 & \mbox{when}\, \, s\geq 1,\\ 
0 & \mbox{when}\, \, s\leq 1-\varepsilon.
\end{array}
\right.
\]
If we set $\psi=\phi(u(z)/t)$ in $II''_{2}$ and use the coarea formula we see that
\begin{align}
\label{II''22goesto}
 \begin{split}
II''_{2}&=\int\limits_{\Omega(t-\varepsilon)}\sum\limits_{k,j=1}^{2}\mathfrak{b}_{kj} u_{x_{k}} g^{2m}\psi_{x_{j}}\rd\nu\\
	&=\int\limits_{\Omega(t(1-\varepsilon))}\sum\limits_{k,j=1}^{2}\mathfrak{b}_{kj} u_{x_{k}} g^{2m}\left(\phi\left(\frac{u(z)}{t}\right)\right)_{x_{j}}\rd\nu\\
	&=\frac{1}{t}\int\limits_{\Omega(t(1-\varepsilon))}\sum\limits_{k,j=1}^{2}\mathfrak{b}_{kj} u_{x_{k}} g^{2m}\phi'\left(\frac{u(z)}{t}\right)u_{x_{j}}\rd\nu\\
	&=\frac{1}{t}\int\limits_{t(1-\varepsilon)}^{t}\phi'(\frac{\tau}{t})\left(\int\limits_{\{z\in D:\, \, u(z)=\tau\}}\sum\limits_{k,j=1}^{2}\mathfrak{b}_{kj} u_{x_{k}}g^{2m}\frac{u_{x_{j}}}{|\nabla u|}\rd H^{1}\right)\rd\tau.
\end{split}
\end{align}
Let 
\[
\Theta(\tau)=\int\limits_{\{z\in D:\,\, u(z)=\tau\}}\sum\limits_{k,j=1}^{2}\mathfrak{b}_{kj} u_{x_{k}}g^{2m}\frac{u_{x_{j}}}{|\nabla u|}\rd H^{1}.
\]
Then using
\[
 \frac{1}{t}\int\limits_{t(1-\varepsilon)}^{t}\phi'\left(\frac{\tau}{t}\right)\rd\tau=\phi(1)-\phi(1-\varepsilon)=1
\]
we have 
\begin{align}
\begin{split}
 II''_{2}=\frac{1}{t}\int\limits_{t(1-\varepsilon)}^{t}\phi'(\frac{\tau}{t})\left[\Theta(\tau)-\Theta(t)\right]\rd\tau + \Theta(t)
\end{split}
 \end{align}
for almost every $t\in (0,1/2)$.
If we let $\varepsilon \to 0$ it follows from the strong form of the Lebesgue Differentiation theorem that
\begin{align}
\label{sendeto0}
 \lim\limits_{\varepsilon\to 0}|\frac{1}{t}\int\limits_{t(1-\varepsilon)}^{t}\phi'(\frac{\tau}{t})\left[\Theta(\tau)-\Theta(t)\right]\rd\tau| &\leq\lim\limits_{\varepsilon\to 0} \frac{1}{t\, \varepsilon}\int\limits_{t(1-\varepsilon)}^{t}|\Theta(\tau)-\Theta(t)|\rd\tau=0														
\end{align}
for $H^{1}$ a.e $t\in (0,1/2)$. From (\ref{II''22goesto}) and (\ref{sendeto0}) for $H^{1}$ a.e $t\in (0,1/2)$ we have
\begin{align}
\label{II''2to0}
 \begin{split}
  II''_{2}\to \Theta(t)\, \, \mbox{as}\,\, \varepsilon\to 0.
 \end{split}
\end{align}
Thus (\ref{II''2}) is true. Hence using (\ref{II''2to0}) in (\ref{II''2}) and then (\ref{II''}) and (\ref{II''2}) in (\ref{II'+II''}) we see that
\begin{align}
\label{II''combined}
\begin{split}
\int\limits_{\{z:\,u(z)=t\}} \sum\limits_{k,j=1}^{2} & \mathfrak{b}_{kj}g^{2m}u_{x_{k}} \frac{u_{x_{j}}}{|\nabla u|} dH^{1}(z) \\
&\leq 2m(2m-1)\int\limits_{\Omega(t)}\sum\limits_{k,j=1}^{2}\mathfrak{b}_{kj}g_{x_{j}}g_{x_{k}}g^{2m-2}(u-t)\rd\nu
\end{split}
\end{align}

Similarly, for fixed $p$, $2<p<\infty$ from Lemma \ref{loguisweaksoln}, $\zeta=v=\log f(\nabla u)$ is a weak sub solution to (\ref{Lzeta0}), $L\zeta=0$ in $D$. 
Using this observation and $g^{2m-1}\tilde{u}\geq 0$ as a test function and the fact that $g\equiv 0$ on $B(0,2)$, we have 
\begin{align}
\label{III'+III''}
 \begin{split}
0 &\geq 2m\int\limits_{\Omega(t)}\sum\limits_{k,j=1}^{2}\mathfrak{b}_{kj}\frac{\urpartial}{\urpartial x_{j}}(\log f(\nabla u))\frac{\urpartial }{\urpartial x_{k}}\left(g^{2m-1} \tilde{u}\right)\rd\nu \\
  &=-2m\int\limits_{\Omega(t)}\sum\limits_{k,j=1}^{2}\mathfrak{b}_{kj}g_{x_{j}}\frac{\urpartial }{\urpartial x_{k}}\left(g^{2m-1}(u-t)\right)\rd\nu \\
&=-2m(2m-1)\int\limits_{\Omega(t)}\sum\limits_{k,j=1}^{2}\mathfrak{b}_{kj}g_{x_{j}}g_{x_{k}}g^{2m-2}(u-t)\rd\nu \\
 &\, \, +2m\int\limits_{\Omega(t)}\sum\limits_{k,j=1}^{2}\mathfrak{b}_{kj}g_{x_{j}}g^{2m-1}u_{x_{k}}\rd\nu\\
	&=-(III'+III'').
\end{split}
\end{align}
Arguing as in the previous case we have (\ref{II''combined}) when $p>2$. 
Therefore, for fixed $p$, $1<p<\infty$, (\ref{II''combined}), Lemma \ref{fishomdp}, and Euler's formula for a homogenous function yield

\begin{align}
\label{estimateforbothI}
 \begin{split}
\int\limits_{\{z:\, u(z)=t\}}g^{2m}\frac{f(\nabla u)}{|\nabla u|}\rd H^{1}(z) &=\frac{1}{p(p-1)}\int\limits_{\{z\in D:\, \, u(z)=t\}}\sum\limits_{k,j=1}^{2}\mathfrak{b}_{kj}\frac{u_{x_{k}}u_{x_{j}}}{|\nabla u|}g^{2m}\rd H^{1}(z) \\
&\leq \frac{2m(2m-1)}{p(p-1)}\int\limits_{\Omega(t)}\sum\limits_{k,j=1}^{2}\mathfrak{b}_{kj}g_{x_{j}}g_{x_{k}}g^{2m-2}(u-t)\rd\nu \\
&\leq c\, 2m (2m-1)\int\limits_{\Omega(t)} |\nabla u|^{p-2} |\nabla g|^{2} g^{2m-2} u\, \rd\nu. 
 \end{split}
\end{align}

Let $\{Q_{i}\}$ be a closed Whitney cube decomposition of $\Omega(t)$ and let $z_{i}$ be the center of $Q_{i}$ for $i=1,\ldots$. Let $R_{i}$ be the union of 
cubes that have a common point in the boundary with $Q_{i}$. 

Note that the definition of $g$ and Lemma \ref{fishomdp} yield for a.e $z\in\Omega$
\begin{align}
\label{derofgis}
|\nabla g|\leq c \frac{|\nabla f(\nabla u)| \|D^{2}u\|}{f(\nabla u)}\approx \frac{\|D^{2}u\|}{|\nabla u|}.
\end{align}
 Moreover, it easily follows from Lemma \ref{localholderfornablau} that 
 \begin{align}
 \label{regonwhitneycubes}
 \int\limits_{Q_{i}}|\nabla u|^{p-2}\sum\limits_{k,j}\left(u_{x_{k}x_{j}}\right)^{2}\rd\nu\leq c\, \int\limits_{R_{i}}\frac{|\nabla u|^{p}}{d(z, \partial\Omega(t))}\rd\nu
 \end{align}
 for every $i=1,\ldots$.

Using (\ref{derofgis}), (\ref{regonwhitneycubes}), Lemmas \ref{fishomdp}, \ref{finequality} in (\ref{estimateforbothI}) on the Whitney cubes $Q_{i}$ we see that
\begin{align}
\label{longintegrals}
 \begin{split}
\int\limits_{\{z:\, u(z)=t\}}g^{2m}\frac{f(\nabla u)}{|\nabla u|}\rd H^{1} & \leq c' \, m^{2}\int\limits_{\Omega(t)} u |\nabla u|^{p-2} |\nabla g|^{2} g^{2m-2}\rd\nu \\
&\leq c' \, m^{2} \sum\limits_{i}\esssup\limits_{Q_{i}} \left(\frac{u}{|\nabla u|^{2}}g^{2m-2}\right)\int\limits_{Q_{i}}|\nabla u|^{p} |\nabla g|^{2}\rd\nu \\
&\leq c' \, m^{2} \sum\limits_{i}\esssup\limits_{Q_{i}} \left(\frac{u}{|\nabla u|^{2}}g^{2m-2}\right)\int\limits_{Q_{i}}|\nabla u|^{p} \frac{|D^{2}u|^{2}}{|\nabla u|^{2}}\rd\nu \\
&\leq c' \, m^{2} \sum\limits_{i}\esssup\limits_{Q_{i}} \left(\frac{u}{|\nabla u|^{2}}g^{2m-2}\right)\int\limits_{Q_{i}}|\nabla u|^{p-2} |D^{2}u|^{2}\rd\nu \\
&\leq c' \, m^{2} \sum\limits_{i}\esssup\limits_{Q_{i}} \left(\frac{u}{|\nabla u|^{2}}g^{2m-2}\right)\int\limits_{R_{i}}\frac{|\nabla u|^{p}}{(d(z,\partial\Omega))^{2}}\rd\nu \\
&\leq c' \, m^{2} \sum\limits_{i}\esssup\limits_{Q_{i}}\left(g^{2m-2}\right)\int\limits_{R_{i}} u |\nabla u|^{p-2} \frac{1}{(d(z,\partial\Omega))^{2}}\rd\nu \\
&\leq c'\,  m^{2} \sum\limits_{i}\esssup\limits_{Q_{i}}\left(g^{2m-2}\right)\int\limits_{R_{i}} u |\nabla u|^{p-2} \frac{|\nabla u|^{2}}{u^{2}}\rd\nu \\
&\leq c'\,  m^{2} \sum\limits_{i}\esssup\limits_{Q_{i}}\left(g^{2m-2}\right)\int\limits_{R_{i}} \frac{|\nabla u|^{p}}{u}\rd\nu \\
&\leq c'\,  m^{2} \int\limits_{\Omega(t)}(g+\tilde{c})^{2m-2} \frac{f(\nabla u)}{u}\rd\nu.
 \end{split}
\end{align} 
Here we have used the fact that $Q_{i}$ intersects with finitely many $R_{i}$ which allows us to interchange freely $R_{i}$ and $Q_{i}$.

Moreover, Lemmas \ref{finequality}, \ref{uisholder} yield
\begin{align}
 \log f(\nabla u)\approx \log|\nabla u| \leq \log(c\, \frac{u(z)}{d(z, \partial\Omega(z))}) \leq \log(c\, u^{\frac{1}{\alpha}-1})\leq \hat{c}\, \log(\frac{1}{t})
\end{align}
whenever $z\in \{\tilde{z}\in D:\, \, u(\tilde{z})=t\}$ and $0<t<1/2$. Therefore, for $z\in \{\tilde{z}\in D:\, \, u(\tilde{z})=t\}$ and $0<t<1/2$ 
we see from Lemmas \ref{finequality}, \ref{harnackinequality} that
\begin{align}
\label{powertwoofgc}
(g+\tilde{c})^{2m-2}=(g^{2}+2g\tilde{c}+\tilde{c}^{2})^{m-1}\leq (g^{2}+c\log 1/t)^{m-1}. 
\end{align}
whenever $0<t<1/2$.
Using the Binomial theorem and (\ref{powertwoofgc}) we can write
\begin{align}
 (g^{2}+c\log 1/t)^{m-1} = \sum\limits_{k=0}^{m-1}\tfrac{(m-1)!}{k!(m-k-1)!}g^{2k}(c\log \frac{1}{t})^{m-1-k}. \label{binom}
\end{align}

Let 
\[
 I_{m}(t)=\int\limits_{\{z:\, u(z)=t\}}g^{2m}\frac{f(\nabla u)}{|\nabla u|}dH^{1}(z)\, \, \mbox{for}\, \, 0<t<\frac{1}{2}.
\]
Then using the Coarea formula, (\ref{estimateforbothI}), (\ref{longintegrals}) and (\ref{binom}) we obtain
\begin{align}
\label{imisless}
\begin{split}
I_{m}(t)&=\int\limits_{\{z:\, u(z)=t\}}g^{2m}\frac{f(\nabla u)}{|\nabla u|}\rd H^{1}(z)\\
&\leq c'\, m^{2}\int\limits_{\Omega(t)} (g+c)^{2m-2} \frac{f(\nabla u)}{u} \rd\nu \\
&=c'\, m^{2}\int\limits_{t}^{1}\frac{1}{\tau}\left(\int\limits_{\{z:\, u(z)=\tau\}} (g+c)^{2m-2}\frac{f(\nabla u)}{|\nabla u|}\rd H^{1}(z)\right) \rd\tau \\
&\leq c'\, m^{2}\int\limits_{t}^{1}\frac{1}{\tau}\left(\int\limits_{\{z:\, u(z)=\tau\}} \sum\limits_{k=0}^{m-1}\tfrac{(m-1)!}{k!(m-k-1)!}g^{2k}(c\log \frac{1}{\tau})^{m-1-k}\frac{f(\nabla u)}{|\nabla u|}\rd H^{1}(z)\right) \rd\tau \\
&\leq  c'\, m^{2}\sum\limits_{k=0}^{m-1}\tfrac{(m-1)!}{k!(m-k-1)!}\int\limits_{t}^{1}\frac{(c\log \frac{1}{\tau})^{m-1-k}}{\tau}\left(\int\limits_{\{z:\, u(z)=\tau\}} g^{2k}\frac{f(\nabla u)}{|\nabla u|}\rd H^{1}(z)\right) \rd\tau \\
&\leq c'\, m^{2}\sum\limits_{k=0}^{m-1}\tfrac{(m-1)!}{k!(m-k-1)!}\left[\int\limits_{t}^{1}\frac{(c\log \frac{1}{\tau})^{m-1-k}}{\tau} I_{k} \rd\tau\right].
\end{split}
\end{align}
It easily follows from $\nabla\cdot \nabla f(\nabla u(z))=0$ for a.e $z\in D$, homogeneity of $f$ and the divergence theorem that
\begin{align}
\label{I0isless}
\begin{split}
 I_{0}(t)=\int\limits_{\{z:\, u(z)=t\}}\frac{f(\nabla u)}{|\nabla u|}\rd H^{1}(z)= \mathrm{constant}=c(p,f) \, \, \mbox{for}\, \, 0<t<1.
\end{split}
\end{align}

One can now use an induction argument on $m$ in the following way: by (\ref{I0isless}) we have $I_{0}\leq c_{\ast}$ for  $0<t<1/2$, and next assume that we have 
\begin{align}
 I_{k}\leq c_{\ast}^{k+1}k![\log\frac{1}{t}]^{k}\, \, \mbox{when}\, \, 0<t<\frac{1}{2} \, \, \mbox{and for every}\, \,  1\leq k \leq m-1,
\end{align}
where  $1\leq c_{\ast}$. Then for $k=m$ a positive integer we have
\begin{align}
\label{inductiononm}
\begin{split}
I_{m}(t) & \leq c'\, m^{2}\sum\limits_{k=0}^{m-1}\frac{(m-1)!}{k!(m-k-1)!}\left[\int\limits_{t}^{1}\frac{(c\log \frac{1}{\tau})^{m-1-k}}{\tau} I_{k} \rd\tau\right] \\
	& \leq c'\, m^{2}\sum\limits_{k=0}^{m-1}\frac{(m-1)!}{k!(m-k-1)!}\left[\int\limits_{t}^{1}\frac{(c\log \frac{1}{\tau})^{m-1-k}}{\tau} c_{\ast}^{k+1}k!(\log(\frac{1}{\tau}))^{k} \rd\tau\right] \\
	& \leq c'\, m^{2}\sum\limits_{k=0}^{m-1}\frac{(m-1)!}{k!(m-k-1)!}c^{m-k-1} c_{\ast}^{k+1}k!\left[\int\limits_{t}^{1}\frac{(\log \frac{1}{\tau})^{m-1}}{\tau} \rd\tau\right] \\
	&\leq c'\, m^{2}\sum\limits_{k=0}^{m-1}\frac{(m-1)!}{k!(m-k-1)!}c^{m-k-1} c_{\ast}^{k+1}k!\frac{(\log(\frac{1}{t}))^{m}}{m} \\
	& \leq c'\, c_{\ast}^{m}m!(\log \frac{1}{t})^{m}\left(\sum\limits_{k=0}^{m-1}\frac{1}{(m-k-1)!}\right)\\
	&\leq c_{\ast}^{m+1}m!(\log \frac{1}{t})^{m}.
\end{split}
\end{align}
for $0<t<1/2$, and $c_{\ast}$ large enough.

Hence by (\ref{inductiononm}), Lemma \ref{mainlemma} is true with $w$ replaced by $g$. It follows from $w\leq g+c'$ that Lemma \ref{mainlemma} is also true for $w$.
\end{proof}
By Lemma \ref{mainlemma} we get for $0<t<1/2$
\begin{align}
\label{divideeverything}
\begin{split}
 \int\limits_{\{z\in D: \,\, u(z)=t\}} \frac{f(\nabla u)}{|\nabla u|}\frac{w^{2m}}{(2c _{\ast})^{m}m![\log\frac{1}{t}]^{m}}\rd H^{1}(z) \leq 2^{-m}c_{\ast}.
\end{split}
\end{align}
Summing over $m$ in (\ref{divideeverything}) yields for $0<t<1/2$
\begin{align}
\label{summingoverm}
\begin{split}
 \int\limits_{\{z\in D:\,\, u(z)=t\}} \frac{f(\nabla u)}{|\nabla u|}\, \mathrm{exp}\left[\frac{w^{2}}{2c _{\ast} \log\frac{1}{t}}\right]\rd H^{1}(z) \leq 2c _{\ast} .
\end{split}
\end{align}
Define
\begin{align}
\mathfrak{D}(t)=\sqrt{4c _{\ast} \left(\log\frac{1}{t}\right)\left(\log\log\frac{1}{t}\right)}\, \ \mathrm{for}\, \ 0<t<e^{-2},
\end{align}
and
\begin{align}
  \mathfrak{B}(t)=\{z:\, u(z)=t\, \, \mathrm{and}\, \ w(z)\geq \mathfrak{D}(t)\}.
\end{align}
Then by (\ref{summingoverm}) we have
\begin{align}
\label{2cstarstar}
\begin{split}
2c_{\ast} & \geq \int\limits_{\{z\in D:\, \, u(z)=t\}} \frac{f(\nabla u)}{|\nabla u|}\, \mathrm{exp}\left[\frac{w^{2}}{2c _{\ast} \log\frac{1}{t}}\right]\rd H^{1}(z) \\
& \geq \int\limits_{\mathfrak{B}(t)} \frac{f(\nabla u)}{|\nabla u|}\, \mathrm{exp}\left[\frac{w^{2}}{2c _{\ast} \log\frac{1}{t}}\right]\rd H^{1}(z)  \\
& \geq \int\limits_{\mathfrak{B}(t)} \frac{f(\nabla u)}{|\nabla u|}\, \mathrm{exp}\left[\frac{\alpha^{2}}{2c _{\ast} \log\frac{1}{t}}\right]\rd H^{1}(z)  \\
& = \int\limits_{\mathfrak{B}(t)} \frac{f(\nabla u)}{|\nabla u|}\, (-\log t)^{2} \rd H^{1}(z).
\end{split}
\end{align}
We conclude from (\ref{2cstarstar}) that 
\begin{align}
\label{441}
\begin{split}
 \int\limits_{\mathfrak{B}(t)}\frac{f(\nabla u)}{|\nabla u|}\rd H^{1}(z)\leq \frac{2c_{**}}{\left(\log\frac{1}{t}\right)^{2}}. 
\end{split}
\end{align}
For a fixed and large $A$, we define the Hausdorff measure $H^{\lambda}$ as follows;

Let
\begin{align}
\label{defnoflambda}
   \lambda(r) = \left\{
     \begin{array}{ll}
       r\re^{A\mathfrak{D}(r)} &\mathrm{when}\, \,  1< p\leq 2 \\
       r\re^{-A\mathfrak{D}(r)} &\mathrm{when}\, \,  2\leq p <\infty.
     \end{array}
   \right.
\end{align} 
Let Hausdorff $H^{\lambda}$ measure and Hausdorff dimension of 
a measure be as defined before Theorem \ref{carleson} relative $\lambda$ as in (\ref{defnoflambda}).

We can now follow closely the argument in \cite[Section 3]{LNP} and deduce that Theorem \ref{maintheorem} is true. For the reader's convenience we give the argument.
\begin{proof}[Proof of Theorem \ref{maintheorem}]
To prove Theorem \ref{maintheorem} for fixed $p$, $1<p\leq 2$ we show that
for a large $A$, 
 $\mu$ is absolutely continuous with respect to $H^{\lambda}$ measure. To this end, let $E\subset\partial\Omega$ be a Borel 
set with $H^{\lambda}(E)=0$. Let $E=E_{1}\cup E_{2}$ where
\begin{align}
E_{1}:=\{z\in E;\, \limsup\limits_{r\to 0}\frac{\mu(B(z,r))}{\lambda(r)}<\infty\},
\end{align}
and
\begin{align}
\label{e2}
\begin{split}
 E_{2}:=\{z\in E;\, \limsup\limits_{r\to 0}\frac{\mu(B(z,r))}{\lambda(r)}=\infty\}.
\end{split}
\end{align} 
It is easily shown that $\mu(E_{1})=0$. It remains to show that
$\mu(E_{2})=0$. By measure theoretic arguments, definition of $\lambda$  and by Vitali's covering argument
it can be shown that given $0<r_{0}<10^{-100}$ there is $\{r_{i}<r_{0}/100, z_{i}\in E_{2}\}$ 
such that 
\begin{align}
\label{observationbyvitali}
 \begin{split}
&B(z_{i}, 10r_{i})\, \, \,  \mbox{are disjoint balls},\\
&\{B(z_{i}, 100r_{i})\}\, \, \mbox{is a covering for}\, \, E_{2}, \\
&\mu(B(z_{i},100r_{i}))\leq 10^{9} \mu(B(z_{i},r_{i}))\, \ \mathrm{and}\, \ \lambda(100s)\leq \mu(B(z,s))\, \, \mbox{for every}\, \, i
 \end{split}
\end{align}
(see \cite[Proof of Theorem 1.3]{LNP}).

Choose $\zeta_{i}\in\partial B(z_{i}, 2r_{i})$ such that $u(\zeta_{i})=\max u$ on $\overline{B(z_{i}, 2r_{i})}$. 
From the last line of (\ref{observationbyvitali}) and Lemma \ref{uismu} 
we know that
the maximum of $u$ on $\overline{B(z_{i},2r_{i})}$ and the maximum of $u$ on $\overline{B(z_{i},5r_{i})}$ are proportional.
Thus, this observation and Lemma \ref{boundaryharnackinequalty} yield $d(\zeta_{i}, \partial\Omega)\approx r_{i}$.

Moreover, using $d(\zeta_{i}, \partial\Omega)\approx r_{i}$ and Lemmas \ref{fishomdp}, \ref{uismu} we see for fixed $i$ that
\begin{align}
\label{uwtildeholds}
  \frac{\mu(B(z_{i},10r_{i}))}{r_{i}}\approx \left(\frac{u(\zeta_{i})}{d(\zeta_{i},\partial\Omega)}\right)^{p-1}\approx \frac{f(\nabla u(z))}{|\nabla u(z)|}
\end{align}
whenever $z\in B(\zeta_{i}, d(\zeta_{i},\partial\Omega)/2)$. Choose $m$ so that $2^{-m}\leq u(\zeta_{i})\leq 2^{-m+1}$, and 
let $\eta_{i}$ be the first point on the line segment from 
$\zeta_{i}$ to a point on $\partial\Omega\cap \partial B(\zeta_{i}, d(\zeta_{i}, \partial\Omega))$ satisfying $u(\eta_{i})=2^{-m}$. Then we see that 
(\ref{uwtildeholds}) holds with
$\zeta_{i}$ replaced by $\eta_{i}$. That is,
\begin{align}
\label{614}
\begin{split}
& 	u(\eta_{i})=2^{-m}\, \, \mbox{and}\, \, d(\eta_{i}, \partial\Omega)\approx r_{i},\\
&   \frac{\mu(B(z_{i},10r_{i}))}{r_{i}}\approx \left(\frac{u(\eta_{i})}{d(\eta_{i},\partial\Omega)}\right)^{p-1}\approx \frac{f(\nabla u(z))}{|\nabla u(z)|} \approx |\nabla u(z)|^{p-1}
\end{split}
\end{align}
whenever $z\in B(\eta_{i}, d(\eta_{i},\partial\Omega)/2)$.

From (\ref{observationbyvitali}) and (\ref{614}) for $z\in B(\eta_{i}, d(\eta_{i},\partial\Omega)/2)$ we have
\begin{align}
\label{447}
\begin{split}
A\mathfrak{D}(100r_{i}) &= \log\left(\frac{\lambda(100r_{i})}{100 r_{i}}\right)  \\
		    & \leq \log\left(\frac{\mu(B(z_{i}, r_{i}))}{100 r_{i}}\right) \\
		    &\leq c \log\left(\frac{\mu(B(z_{i}, 10 r_{i}))}{r_{i}}\right) \\
		    &\leq c \log|\nabla u|^{p-1} \leq c \log f(\nabla u)+c=w(z)+c.
\end{split}
\end{align}
where $A$ is as in (\ref{defnoflambda}) and $c=c(p,f)\geq 1$.

Using Lemma \ref{uisholder} we can estimate $2^{-m}$ above in terms of $r_{i}$. We can also estimate $2^{-m}$ 
below in terms of $r_{i}$ using the last line in (\ref{observationbyvitali}) and (\ref{614}).
That is, there exist $c'=c(p,f)$ and $\beta=\beta(p,f)<1$ such that 
\begin{align}
\label{450}
r_{i}\leq c' (2^{-m})^{\beta}  \, \ \mathrm{and}\,\ 2^{-m}\leq c' r_{i}^{\beta}. 
\end{align}
From (\ref{441}), (\ref{447})-(\ref{450}) we have,
\begin{align}
\label{313}
\begin{split}
\mu[B(z_{i},  10r_{i})]\leq c \int\limits_{\mathfrak{B}(2^{-m})\cap B(z_{i}, 10r_{i})} \frac{f(\nabla u)}{|\nabla u|} \rd H^{1}(z) 
\end{split}
\end{align}
For large $A$, (\ref{441}), (\ref{450}), and (\ref{313}) yield
\begin{align}
\begin{split}
\mu(E_{2}) & \leq \mu\left(\bigcup\limits_{i} B(z_{i}, 100r_{i})\right) \\
& \leq 10^{9} \sum\limits_{i} \mu(B(z_{i}, 10r_{i}))  \\
&\leq c \sum\limits_{m=m_{0}}\int\limits_{\mathfrak{B}(2^{-m})} \frac{f(\nabla u)}{|\nabla u|} \rd H^{1}(z) \\
&\leq c^{2}\sum\limits_{m=m_{0}} m^{-2} \leq \frac{c^{3}}{m_{0}}.  
\end{split}
\end{align}
where $2^{-m_{0}\beta}=cr^{\beta^{2}}_{0}$. As $r_{0}\to 0$ we have $\mu(E_{2})\to 0$. So we have the desired result when $1<p\leq 2$.

%%%%%%%%%%%%%%%%%%%%%%%%%%%
%%%%%%%%%%%%%%%%%%%%%%%%%%%%

To finish the proof of Theorem \ref{maintheorem}, it remains to show that for $2 \leq p<\infty$, $\mu$ is concentrated on a set of $\sigma-$finite $H^{\lambda}$ measure. 
To obtain this, by definition, we show that there is a Borel set $K\subset\partial\Omega$ having $\sigma-$finite $H^{\lambda}$ 
measure satisfying $\mu(K)=\mu(\partial\Omega)$.

We first show that $\mu(K')=0$ where
\begin{align}
\label{K'haszero}
\begin{split}
 K':=\{z\in \partial\Omega; \, \lim\limits_{r\to 0}\frac{\mu(B(z,r)}{\lambda(r)}=0\}.
\end{split}
\end{align}
Then $\mu(K)=\mu(\partial\Omega)$ where  
\[
K=\{z\in\partial\Omega;\, \limsup\limits_{r\to 0}\frac{\mu(B(z,r)}{\lambda(r)}>0\}
\]
and it will follow easily that $K$ has $\sigma-$finite $H^{\lambda}$ measure.

Let $r_{0}$ be sufficiently small. We can argue as in \cite[Proof of Lemma 2.4]{LNP} to find $\{r_{i}<r_{0}/100, z_{i}\in K'\}$ such that 
\begin{align}
\label{observationbyvitaliforp2}
 \begin{split}
&B(z_{i}, 10r_{i})\, \, \,  \mbox{are disjoint balls},\\
&\{B(z_{i}, 100r_{i})\}\, \, \mbox{is a covering for}\, \, K', \\
&\mu(B(z_{i},100r_{i}))\leq c \mu(B(z_{i},r_{i})) \, \ \mathrm{and}\, \ \ \mu(B(z_{i},100r_{i}))\leq \lambda(r_{i})\, \,\mbox{for every}\, \, i.
 \end{split}
\end{align}
where the constant is independent of $z_{i}$ and $r_{i}$ for $i=1,\ldots$.
Let $I'$ be the set of all indexes $i$ for which $r_{i}^{3}\leq \mu(B(z_{i}, 100r_{i}))$ and let $I''$ be the indexes where this inequality does not hold. 
By (\ref{observationbyvitaliforp2}) we see that
\begin{align}
\label{K'isleqthatn}
\begin{split}
\mu(K') &\leq \mu (\bigcup\limits_{i\in I'}B(z_{i}, 100r_{i})) \\
	& + \mu(\bigcup\limits_{i\in I'}B(z_{i}, 100r_{i})) + \mu(\bigcup\limits_{i\in I''}B(z_{i}, 100r_{i})) \\
	& \leq \mu(\bigcup\limits_{i\in I'}B(z_{i}, 100r_{i}))+\sum\limits_{i\in I''}r^{3}_{i} \\
	& \leq \mu(\bigcup\limits_{i\in I'}B(z_{i}, 100r_{i}))+c' r_{0}H^{2}(\Omega).
\end{split}
\end{align}
When $i\in I'$ we can repeat the argument for $1<p\leq 2$ to get (\ref{313}). Finally, using (\ref{441}) and (\ref{313}) in (\ref{K'isleqthatn})  we see that

% first (\ref{uwtildeholds}) and then (\ref{614}), (\ref{447}). 
%As $i\in I'$, we get the left hand inequality. Once again the right hand inequality follows from Lemma \ref{uisholder}. 
%From these observations we see (\ref{313}) holds for sufficiently large $A$ as in (\ref{defnoflambda}) and
\begin{align}
\begin{split}
\mu(K')-c'r_{0}H^{2}(\Omega) & \leq \mu(\bigcup\limits_{i\in I'}B(z_{i}, 100r_{i})) \\
			      & \leq c \sum\limits_{i \in I'}\mu(B(z_{i}, 10r_{i})) \\
			      & \leq c \sum\limits_{m=m_{0}}\int\limits_{\mathfrak{B}(2^{-m})}\frac{f(\nabla u)}{|\nabla u|}\rd H^{1} \\
			      &\leq c^{2}\sum\limits_{m=m_{0}} m^{-2} \leq \frac{c^{3}}{m_{0}}. 
\end{split}
\end{align}
Hence $2^{-m_{0}\beta}=cr^{\beta^{2}}_{0}$. Since $r_{0}$ can be arbitrarily small, 
we can let $r_{0}\to 0$ from which we conclude that $\mu(K')=0$.

It remains to show that $\mu(K)=\mu(\partial\Omega)$ and $K$ has $\sigma-$finite $H^{\lambda}$ measure. To this end let $K_{i}$, 
for a positive integer $i$, be the set of points in $K$ with the property that
\[
K_{i}=\{z\in\partial\Omega;\, \limsup\limits_{r\to 0}\frac{\mu(B(z,r)}{\lambda(r)}\geq \frac{1}{i}\}.
\]
From a covering argument it follows that
\[
 H^{\lambda}(K_{i}) \leq c\, i\mu(K_{i})
\]
from which we can conclude that $K_{i}$ has $\sigma-$finite $H^{\lambda}$ measure. 
Since $\bigcup\limits_{i} K_{i}=K$, we conclude that $K$ has $\sigma-$finite $H^{\lambda}$ measure.
which finishes the proof for $2\leq p<\infty$.

The proof of Theorem \ref{maintheorem} is now complete.
\end{proof}
  \section*{Acknowledgments} The author would like to thank John L. Lewis for originally suggesting the problem under consideration and for many helpful discussions. 
  The author would also like to thank Leonid Kovalev for some helpful comments regarding regularity assumptions on $f$.

% We would also like to thank the referee for
% a careful read of the ﬁrst draft of the paper and pointing out to numerous
% inaccuracies in the exposition.

\bibliographystyle{amsplain}
\bibliography{myref}
\end{document}